\newtheorem{thm}{Theorem}[section]
\newtheorem{lem}[thm]{Lemma}
\newtheorem{prop}[thm]{Proposition}
\newtheorem{cor}[thm]{Corollary}
\theoremstyle{definition}
\theoremstyle{remark}
\numberwithin{equation}{section}
\def\R{\mathbb{R}}
\def\ra{\rightarrow}
\def\al{\alpha}
\def\ep{\epsilon}
\def\la{\lambda}
\def\si{\sigma}
\def\om{\omega}
\def\de{\delta}
\def\De{\Delta}
\def\Ga{\Gamma}
\begin{document}

\title[Regularity and stability of transition fronts]{Regularity and stability of transition fronts in nonlocal equations with time heterogeneous ignition nonlinearity}

\author{Wenxian Shen}
\address{Department of Mathematics and Statistics, Auburn University, Auburn, AL 36849}
\email{wenxish@auburn.edu}

\author{Zhongwei Shen}
\address{Department of Mathematics and Statistics, Auburn University, Auburn, AL 36849}
\email{zzs0004@auburn.edu}

\subjclass[2010]{35C07, 35K55, 35K57, 92D25}



\keywords{transition front, regularity, stability}

\begin{abstract}
The present paper is devoted to the investigation of various properties of transition fronts in nonlocal equations in heterogeneous media of ignition type, whose existence has been established by the authors of the present paper in a previous work. It is first shown that the transition front is continuously differentiable in space with uniformly bounded and uniformly Lipschitz continuous space partial derivative. This is the first time that regularity of transition fronts in nonlocal equations is ever studied. It is then shown that the transition front is uniformly steep. Finally, asymptotic stability, in the sense of exponentially attracting front like initial data, of the transition front is studied.
\end{abstract}

\maketitle

\tableofcontents


\section{Introduction}

Consider
\begin{equation}\label{eqn-nonlocal}
u_{t}=J\ast u-u+f(t,x,u),
\end{equation}
where $J$ is the dispersal kernel and $[J\ast u](x)=\int_{\R}J(x-y)u(y)dy=\int_{\R}J(y)u(x-y)dy$, and the reaction term $f$ is of monostable type, bistable type or ignition type. Such an equation, introduced as a substitute for the classical reaction-diffusion equation
\begin{equation}\label{eqn-classical}
u_{t}=\De u+f(t,x,u),
\end{equation}
has been used to model various diffusive processes with jumps (see e.g. \cite{Fi03} for some background). While a large amount of literature has been carried out to the understanding of \eqref{eqn-classical}, its nonlocal version \eqref{eqn-nonlocal} has attracted a lot of attention recently and some results have been established. For \eqref{eqn-nonlocal} in the homogeneous media, traveling waves, i.e., solutions of the form $u(t,x)=\phi(x-ct)$ with $(c,\phi)$ satisfying
\begin{equation*}
J\ast\phi-\phi+c\phi_{x}+f(\phi)=0,\quad \phi(-\infty)=1,\quad\phi(\infty)=0,
\end{equation*}
have been obtained (see \cite{BaFiReWa97,CaCh04,Ch97,Cov-thesis,CoDu05,CoDu07,Sc80} and references therein). The study of \eqref{eqn-nonlocal} in the heterogeneous media is rather recent and results concerning front propagation are very limited. In \cite{CDM13,ShZh10,ShZh12-1,ShZh12-2}, the authors investigated \eqref{eqn-nonlocal} in the space periodic monostable media and proved the existence of spreading speeds and periodic traveling waves. In \cite{RaShZh}, Rawal, Shen and Zhang studied the existence of spreading speeds and traveling waves of \eqref{eqn-nonlocal} in the space-time periodic monostable media. For \eqref{eqn-nonlocal} in the space heterogeneous monostable media, Berestycki, Coville and Vo studied in \cite{BCV14} the principal eigenvalue, positive solution and long-time behavior of solutions, while Lim and Zlato\v{s} proved in \cite{LiZl14} the existence of transition fronts in the sense of Berestycki-Hamel (see \cite{BeHa07,BeHa12}). In \cite{BeRo}, Berestycki and Rodr\'{i}guez studied \eqref{eqn-nonlocal} with a barrier nonlinearity of monostable type or bistable type, and proved that while propagation always occur in the monostable case, it may be obstructed in the bistable case. For \eqref{eqn-nonlocal} in the time heterogeneous media of ignition type, the authors of the present paper proved in \cite{ShSh14-2} the existence of transition fronts.

In the present paper, we continue to study \eqref{eqn-nonlocal} in the time heterogeneous media based on the work done in \cite{ShSh14-2}. Recall that, an entire solution $u(t,x)$ of \eqref{eqn-nonlocal} is called a \textit{transition front} in the sense of Berestycki-Hamel (see \cite{BeHa07,BeHa12}) if $u(t,-\infty)=1$ and $u(t,\infty)=0$ for any $t\in\R$, and for any $\ep\in(0,1)$ there holds
\begin{equation*}
\sup_{t\in\R}\text{diam}\{x\in\R|\ep\leq u(t,x)\leq 1-\ep\}<\infty.
\end{equation*}
Equivalently, an entire solution $u(t,x)$ of \eqref{eqn-nonlocal} is called a transition front if there exists a function $X:\R\to\R$ such that
\begin{equation*}
\lim_{x\to-\infty}u(t,x+X(t))=1\,\,\text{and}\,\,\lim_{x\to\infty}u(t,x+X(t))=0\,\,\text{uniformly in}\,\,t\in\R.
\end{equation*}
We remark that neither the definition of transition front nor the equation \eqref{eqn-nonlocal} itself guarantees any space regularity of transition fronts beyond continuity. Also, the transition fronts constructed in \cite{LiZl14} and \cite{ShSh14-2} are only uniformly Lipschitz continuous in space; it is not known if they are continuously differentiable in space. One of the main goals of the present paper is to investigate the space regularity of transition fronts constructed in \cite{ShSh14-2}. It should be pointed out that space regularity is of fundamental importance in further studying various important properties, such as uniform steepness and stability, of transition fronts.

Now, let us focus on \eqref{eqn-nonlocal} in the time heterogeneous media of ignition type, i.e.,
\begin{equation}\label{main-eqn}
u_{t}=J\ast u-u+f(t,u),\quad (t,x)\in\R\times\R,
\end{equation}
where the convolution kernel $J$ satisfies
\medskip

\noindent {\bf (H1)} {\it  $J\not\equiv0$, $J\in C^{1}(\R)$, $J(x)=J(-x)\geq0$ for all $x\in\R$, $\int_{\R}J(x)dx=1$, $\int_{\R}|J'(x)|dx<\infty$ and
\begin{equation}\label{decay-convolution-kernel}
\int_{\R}J(x)e^{\la x}dx<\infty,\quad\forall\la>0;
\end{equation}
}

\medskip

\noindent and the time heterogeneous nonlinearity $f(t,u)$ satisfies
\medskip

\noindent {\bf (H2)} {\it  $f:\R\times[0,\infty)\ra\R$ is continuously differentiable and satisfies the following conditions:
\begin{itemize}
\item there are $\theta\in(0,1)$ (the ignition temperature), $f_{\min}\in C^{1,\al}([0,1])$ and a Lipschitz continuous function $f_{\max}:[0,1]\ra\R$ satisfying
\begin{equation*}
\begin{split}
f_{\min}(u)=0=f_{\max}(u),\,\,&u\in[0,\theta]\cup\{1\},\\
0<f_{\min}(u)\leq f_{\max}(u),\,\,&u\in(\theta,1),\\
f_{\min}'(1)<0
\end{split}
\end{equation*}
such that
\begin{equation*}
f_{\min}(u)\leq f(t,u)\leq f_{\max}(u),\quad (t,u)\in[0,1].
\end{equation*}

\item $f(t,u)<0$ for $(t,u)\in\R\times(1,\infty)$

\item first-order partial derivatives are uniformly bounded, i.e.,
\begin{equation*}
\sup_{(t,u)\in\R\times[0,1]}|f_{t}(t,u)|<\infty\quad\text{and}\sup_{(t,u)\in\R\times[0,\infty)}|f_{u}(t,u)|<\infty
\end{equation*}

\item there exists $\tilde{\theta}\in(\theta,1)$ such that $f_{u}(t,u)\leq0$ for all $t\in\R$ and $u\in[\tilde{\theta},1]$.
\end{itemize}
}

\smallskip

For convenience and later use, let us first  summarize the main results obtained in \cite{ShSh14-2}.
To this end, consider the following homogeneous equation
\begin{equation}\label{main-eqn-perturb-homo}
u_{t}=J\ast u-u+f_{\min}(u),\quad (t,x)\in\R\times\R,
\end{equation}
where $f_{\min}$, given in $\rm(H2)$, is of ignition type. Assume (H1) and (H2).  It is proven in \cite{Cov-thesis} that there are a unique $c_{\min}^*>0$ and a unique $C^{1}$ function $\phi=\phi_{\min}:\R\to(0,1)$ satisfying
\begin{equation}\label{tw-homo}
\begin{cases}
J\ast\phi-\phi+c_{\min}^*\phi'+f_{\min}(\phi)=0,\\
\phi'<0,\,\,\phi(0)=\theta,\,\,\phi(-\infty)=1\,\,\text{and}\,\,\phi(\infty)=0.
\end{cases}
\end{equation}
That is, $\phi_{\min}$ is the normalized wave profile and $\phi_{\min}(x-c_{\min}^*t)$ is the traveling wave of \eqref{main-eqn-perturb-homo}. Moreover, using the equation in \eqref{tw-homo}, it is  not hard to see that $\phi_{\min}'$ is uniformly Lipschitz continuous, that is,
\begin{equation}\label{Lip-TW}
\sup_{x\neq y}\bigg|\frac{\phi_{\min}'(x)-\phi_{\min}'(y)}{x-y}\bigg|<\infty.
\end{equation}

The following proposition is proved in \cite{ShSh14-2}.

\begin{prop}[\cite{ShSh14-2}]\label{prop-property-approximating-sol}
Suppose $\rm(H1)$-$\rm(H2)$.
\begin{itemize}
\item[\rm(1)] For $s<0$, there exists a unique $y_{s}\in\R$ with $y_{s}\ra-\infty$ as $s\to-\infty$ such that the classical solution $u(t,x;s)$ of \eqref{main-eqn} with initial data $u(s,x;s)=\phi_{\min}(x-y_{s})$ satisfies the normalization $u(0,0;s)=\theta$ and the following properties:
\begin{itemize}
\item[\rm(i)] $u(t,-\infty;s)=1$, $u(t,\infty;s)=0$ and $u(t,x;s)$ is strictly decreasing in $x$;
\item[\rm(ii)] let $X_{\la}(t;s)$ be such that $u(t,X_{\la}(t;s);s)=\la$ for any $\la\in(0,1)$; there exist $c_{\min}>0$, $c_{\max}>0$,
and  a twice continuously differentiable function $X(\cdot;s):[s,\infty)\to\R$ satisfying
\begin{equation*}
0<c_{\min}\leq\dot{X}(t;s)\leq c_{\max}<\infty,\,\, s<0,\,\,t\geq s\quad\text{and}\quad\sup_{s<0,t\geq s}|\ddot{X}(t;s)|<\infty
\end{equation*}
such that
\begin{equation*}
\forall\la\in(0,1),\quad\sup_{s<0,t\geq s}|X(t;s)-X_{\la}(t;s)|<\infty
\end{equation*}
and there exist exponents $c_{\pm}>0$ and shifts $h_{\pm}>0$ such that
\begin{equation*}
\begin{split}
u(t,x;s)&\geq1-e^{c_{-}(x-X(t;s)+h_{-})}\quad\text{if}\quad x\leq X(t;s)-h_{-},\\
u(t,x;s)&\leq e^{-c_{+}(x-X(t;s)-h_{+})}\quad\text{if}\quad x\geq X(t;s)+h_{+}
\end{split}
\end{equation*}
for all $s<0$, $t\geq s$;

\item[\rm(iii)] $u(t,x;s)$ is uniformly Lipschitz continuous in space, that is,
\begin{equation}\label{locally-uniform-Lip}
\sup_{x\neq y\atop s<0,t\geq s}\bigg|\frac{u(t,y;s)-u(t,x;s)}{y-x}\bigg|<\infty.
\end{equation}
\end{itemize}

\item[\rm(2)]  There is a transition front $u(t,x)$ that is strictly decreasing in space and uniformly Lipschitz continuous in space, that is,
\begin{equation*}
\sup_{x\neq y\atop t\in\R}\bigg|\frac{u(t,y)-u(t,x)}{y-x}\bigg|<\infty,
\end{equation*}
and  a continuously differentiable function $X:\R\to\R$ satisfying the following properties:
\begin{itemize}
\item[\rm(i)] there holds
\begin{equation*}
X(t;s)\to X(t),\quad u(t,x;s)\to u(t,x)\quad\text{and}\quad u_{t}(t,x;s)\to u_{t}(t,x)
\end{equation*}
locally uniformly in $(t,x)\in\R\times\R$ as $s\to-\infty$ along some subsequence;

\item[\rm(ii)]  $\dot{X}(t)\in[c_{\min},c_{\max}]$ for all $t\in\R$, where $c_{\min}$ and $c_{\max}$ are as in $\rm(1)(ii)$;

\item[\rm(iii)] there hold
\begin{equation*}
\begin{split}
u(t,x)&\geq1-e^{c_{-}(x-X(t)+h_{-})}\quad\text{if}\quad x\leq X(t)-h_{-},\\
u(t,x)&\leq e^{-c_{+}(x-X(t)-h_{+})}\quad\text{if}\quad x\geq X(t)+h_{+}
\end{split}
\end{equation*}
for all $s<0$, $t\geq s$, where $c_{\pm}$ and $h_{\pm}$ are as in $\rm(1)(ii)$.
\end{itemize}
\end{itemize}
\end{prop}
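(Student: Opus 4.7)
The plan is to obtain the transition front of part (2) as a locally uniform limit of the approximating solutions $u(t,x;s)$ of part (1), each of which is a classical solution of \eqref{main-eqn} starting at time $s$ from a shift of the homogeneous profile $\phi_{\min}$. For part (1), global well-posedness of the Cauchy problem is immediate since the map $u\mapsto J\ast u-u+f(t,u)$ is globally Lipschitz on bounded subsets of $L^{\infty}(\R)$. Write $u^{y}(t,x;s)$ for the solution with initial datum $\phi_{\min}(x-y)$. The comparison principle forces $u^{y}$ to remain strictly decreasing in $x$ (two distinct $x$-translates of the initial datum are strictly ordered and this order persists), pins the limits $u^{y}(t,\pm\infty;s)\in\{0,1\}$, and, via translation invariance of the equation in $x$, makes $y\mapsto u^{y}(0,0;s)$ a continuous strictly increasing bijection onto $(0,1)$; the unique preimage of $\theta$ is the required $y_{s}$, and $y_{s}\to-\infty$ as $s\to-\infty$ follows from the positive lower speed bound obtained below.

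For the quantitative properties (1)(ii)--(iii), I would sandwich $f$ between $f_{\min}$ and $f_{\max}$ (the latter taken to be of ignition type with its own wave $(\phi_{\max},c_{\max}^{*})$) and compare $u(t,x;s)$ with suitably time-shifted rigid traveling waves of the two homogeneous equations. This yields both the speed bounds $c_{\min}\leq\dot X(t;s)\leq c_{\max}$ for any smooth regularization $X(t;s)$ of the level-set function $X_{\theta}(t;s)$, and the uniform closeness $\sup|X_{\lambda}(t;s)-X(t;s)|<\infty$, which follows from the uniform steepness of $\phi_{\min}$ and $\phi_{\max}$. The exponential tail estimates come from comparison with exponential sub/supersolutions of the equation linearized near $u=0$ and near $u=1$; hypothesis \eqref{decay-convolution-kernel} on $J$ ensures these exponential solutions exist for any $c_{\pm}>0$. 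For $X(t;s)$ itself I would take a time-convolution of $X_{\theta}(t;s)$ against a fixed smooth mollifier, then derive the uniform bound on $\ddot X$ from boundedness of $|u_{t}|$ together with a uniform lower bound on $|u_{x}|$ in the intermediate zone where $u$ takes values bounded away from $0$ and $1$ (a nonlocal Harnack-type estimate). The uniform Lipschitz bound \eqref{locally-uniform-Lip} I would obtain by differentiating \eqref{main-eqn} in $x$ to get $v_{t}=J\ast v-v+f_{u}(t,u)v$ with $v=u_{x}$; the initial datum $\phi_{\min}'(\cdot-y_{s})$ is uniformly bounded and uniformly Lipschitz by \eqref{Lip-TW}, while the sign condition $f_{u}\leq 0$ near $u=1$ from $\rm(H2)$ together with the uniform exponential decay near $u=0$ prevent growth of $\|v(t,\cdot)\|_{\infty}$ over unbounded time intervals.

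For part (2), the uniform bounds just established give equicontinuity and equiboundedness of the families $\{u(\cdot,\cdot;s)\}_{s<0}$, $\{u_{t}(\cdot,\cdot;s)\}_{s<0}$ and $\{X(\cdot;s)\}_{s<0}$, so Arzel\`a--Ascoli yields a subsequence $s_{n}\to-\infty$ along which the convergences in (2)(i) hold locally uniformly. The limit $u$ satisfies \eqref{main-eqn} on all of $\R\times\R$ by passing to the limit pointwise in the equation; the exponential estimates (2)(iii) transfer verbatim because $c_{\pm}$ and $h_{\pm}$ are independent of $s$; and monotonicity in $x$ together with the correct limits at $\pm\infty$ identify $u$ as a transition front with interface function $X$. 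The bound $\dot X\in[c_{\min},c_{\max}]$ passes through via $C^{1}_{\mathrm{loc}}$ convergence of $X(\cdot;s_{n})$, which is available thanks to the $s$-uniform bound on $\ddot X$.

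The step I expect to be the main obstacle is the simultaneous, uniform-in-$s$ control of all of: the $C^{2}$ interface function, the strict positive lower bound on its speed, and the exponential tail decay with $s$-independent constants. Unlike in the classical parabolic case, the nonlocal semigroup provides no space smoothing, so none of these can be extracted by standard regularity theory; they must come entirely from comparison with homogeneous traveling waves of $f_{\min}$ and $f_{\max}$ and from carefully engineered exponential sub/supersolutions whose constants do not deteriorate as $s\to-\infty$. This is exactly where the assumption that $f_{\min}$ is genuinely of ignition type (and therefore admits the unique profile $\phi_{\min}$ with positive speed $c_{\min}^{*}$ from \eqref{tw-homo}), rather than merely nonnegative, becomes indispensable.
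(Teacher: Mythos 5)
This proposition is not proved in the present paper: it is quoted from the authors' earlier work \cite{ShSh14-2} (note the bracketed citation in the statement and the sentence just before it, ``The following proposition is proved in \cite{ShSh14-2}''). There is therefore no in-paper proof to measure your attempt against; what follows evaluates your sketch on its own terms and against the techniques the paper does deploy downstream.

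Your skeleton is plausible and consistent with how the paper uses this proposition: sandwiching by homogeneous ignition waves of $f_{\min}$ and $f_{\max}$, the comparison principle for strict monotonicity and the limits at $\pm\infty$, an intermediate-value argument to pin down $y_s$, and Arzel\`a--Ascoli plus diagonalization for the $s\to-\infty$ limit. Two places need repair. First, (H2) makes $f_{\max}$ merely Lipschitz, so it need not admit a traveling wave $(\phi_{\max},c^*_{\max})$; you would first majorize it by a $C^{1}$ ignition nonlinearity and compare with that wave. Second, and more substantively, your explanation of why $\|u_x(t,\cdot)\|_\infty$ stays bounded uniformly in $s<0$, $t\ge s$ is incomplete. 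The sign condition $f_u\le 0$ on $[\tilde\theta,1]$ and $f_u\equiv 0$ on $[0,\theta]$ handle the two tails, but the potential source of growth is the intermediate region $\theta<u<\tilde\theta$, where $f_u$ may be positive, and the exponential factor $e^{\int f_u}$ in the Duhamel formula for $u_x$ is not controlled by those sign conditions alone. The mechanism that closes the estimate---used explicitly in Section \ref{sec-regularity} of this paper at \eqref{growth-period} and \eqref{space-derivative-of-f}---is that any fixed spatial point is swept by the moving intermediate zone in a uniformly bounded time $T$, because $\dot X(t;s)\ge c_{\min}>0$ and the zone has uniformly bounded width; this caps the accumulated growth by $e^{C_0 T}$. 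Without invoking this transit-time bound your a priori Lipschitz estimate does not close, and the phrase ``uniform exponential decay near $u=0$ prevents growth over unbounded time intervals'' leaves precisely that gap.
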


In the present paper, we intend to improve the uniform Lipschitz continuity in space of $u(t,x)$ in Proposition
\ref{prop-property-approximating-sol}(2),  and then, study other important properties of $u(t,x)$ such as uniform steepness and stability. To do so, we further assume

\medskip

\noindent {\bf (H3)} {\it  $f(t,u)$ is twice continuously differentiable in $u$ and satisfies
\begin{equation*}
\sup_{(t,u)\in\R\times[0,1]}|f_{uu}(t,u)|<\infty.
\end{equation*}
}

\medskip

Our first main result concerning space regularity of $u(t,x)$ is stated in the following theorem.

\begin{thm}\label{thm-regularity}
Suppose $\rm(H1)$-$\rm(H3)$. Let $u(t,x)$ be the transition front in Proposition \ref{prop-property-approximating-sol}$\rm(2)$. Then, for any $t\in\R$, $u(t,x)$ is continuously differentiable in $x$. Moreover, $u_{x}(t,x)$ is uniformly bounded and uniformly Lipschitz continuous in $x$, that is,
\begin{equation}\label{uniform-bd-space-derivative}
\sup_{(t,x)\in\R\times\R}|u_{x}(t,x)|<\infty\quad\text{and}\quad\sup_{x\neq y\atop t\in\R}\bigg|\frac{u_{x}(t,x)-u_{x}(t,y)}{x-y}\bigg|<\infty,
\end{equation}
respectively.
\end{thm}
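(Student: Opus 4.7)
The plan is to work with the approximating solutions $u(t,x;s)$ from Proposition~\ref{prop-property-approximating-sol}(1), establish uniform-in-$s$ bounds on their space derivatives and on the Lipschitz constants of those derivatives, and then pass to the limit $s\to -\infty$ via Arzel\`a--Ascoli. Since $J\in C^{1}$ and $\int|J'|<\infty$, one has $(J\ast u)_{x}=J'\ast u$ with $\|J'\ast u\|_{\infty}\le\|J'\|_{L^{1}}$, so formally differentiating \eqref{main-eqn} in $x$ yields the linear scalar ODE in $t$ (with $x$ a parameter)
\begin{equation*}
v_{t}+\bigl[1-f_{u}(t,u(t,x;s))\bigr]\,v=(J'\ast u)(t,x;s),\qquad v(s,x;s)=\phi_{\min}'(x-y_{s}).
\end{equation*}
I would define $v(\cdot,\cdot;s)$ by variation of constants for this ODE, and then identify it with $u_{x}(\cdot,\cdot;s)$ by checking that the difference quotients $v^{h}(t,x;s):=[u(t,x+h;s)-u(t,x;s)]/h$ solve a close cousin of the same ODE (with $f_{u}$ replaced by the standard secant average and $J'\ast u$ by $\int_{0}^{1}(J'\ast u)(t,\cdot+\theta h;s)\,d\theta$) and converge pointwise to $v$ as $h\to 0$ by continuous dependence of ODEs on parameters.

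The main obstacle, and the heart of the argument, is the uniform-in-$(s,t,\tau,x)$ estimate
\begin{equation*}
\int_{\tau}^{t}f_{u}(\sigma,u(\sigma,x;s))\,d\sigma\le C_{0}.
\end{equation*}
I would derive it from (H2) together with Proposition~\ref{prop-property-approximating-sol}(1)(ii): $f_{u}(\sigma,\cdot)\equiv 0$ on $[0,\theta]$ and $f_{u}(\sigma,\cdot)\le 0$ on $[\tilde\theta,1]$, so the only positive contributions to the integral come from times $\sigma$ at which $u(\sigma,x;s)\in(\theta,\tilde\theta)$; by the exponential tail bounds, such $\sigma$ force $X(\sigma;s)\in(x-h_{+}^{\ast},x+h_{-}^{\ast})$ for universal constants $h_{\pm}^{\ast}$, and since $\dot X(\sigma;s)\ge c_{\min}>0$, the set of such $\sigma$ has Lebesgue measure at most $(h_{+}^{\ast}+h_{-}^{\ast})/c_{\min}$; combined with $|f_{u}|\le\|f_{u}\|_{\infty}$ this gives the bound $C_{0}$. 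Plugging $\int_{\tau}^{t}(1-f_{u})d\sigma\ge(t-\tau)-C_{0}$ into the variation-of-constants formula produces $e^{-\int_{\tau}^{t}(1-f_{u})d\sigma}\le e^{C_{0}}e^{-(t-\tau)}$ and hence
\begin{equation*}
\sup_{s<0,\,t\ge s,\,x\in\R}|v(t,x;s)|\le e^{C_{0}}\bigl(\|\phi_{\min}'\|_{\infty}+\|J'\|_{L^{1}}\bigr).
\end{equation*}

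For the uniform Lipschitz bound on $v$ I would repeat the variation-of-constants argument one level up, on the difference quotient $w^{h}(t,x;s):=[v(t,x+h;s)-v(t,x;s)]/h$. Differencing the ODE for $v$ in $x$ gives
\begin{equation*}
w^{h}_{t}+[1-f_{u}(t,u(t,x+h;s))]\,w^{h}=S^{h}(t,x;s)+R^{h}(t,x;s)\,v(t,x;s),
\end{equation*}
where $R^{h}$ is the difference quotient of $f_{u}\circ u$ (bounded by $\|f_{uu}\|_{\infty}$ times the uniform Lipschitz constant $L$ of $u$, using (H3) and Proposition~\ref{prop-property-approximating-sol}(1)(iii)) and $S^{h}$ is the difference quotient of $J'\ast u$ in $x$ (bounded by $L\|J'\|_{L^{1}}$ by the same Lipschitz bound on $u$). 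The initial datum $w^{h}(s,x;s)$ is bounded uniformly in $s,h$ by the Lipschitz constant of $\phi_{\min}'$ from \eqref{Lip-TW}. The same key estimate, now applied at $x+h$, controls the damping exponent, so variation of constants bounds $|w^{h}|$ independently of $h$, $s$, $t$, $x$, which is exactly uniform Lipschitz continuity of $v(t,\cdot;s)$.

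Finally I would pass to the limit along the subsequence $s_{n}\to -\infty$ from Proposition~\ref{prop-property-approximating-sol}(2)(i). The uniform bound together with the uniform Lipschitz estimate makes $\{u_{x}(t,\cdot;s_{n})\}$ locally equicontinuous, so Arzel\`a--Ascoli extracts a further subsequence with $u_{x}(t,\cdot;s_{n})\to \tilde v(t,\cdot)$ locally uniformly; combined with the known convergence $u(t,\cdot;s_{n})\to u(t,\cdot)$, passing to the limit in the identity $u(t,x;s_{n})-u(t,x_{0};s_{n})=\int_{x_{0}}^{x}u_{x}(t,y;s_{n})\,dy$ identifies $\tilde v=u_{x}$ and transfers the uniform bounds to $u_{x}(t,\cdot)$, establishing \eqref{uniform-bd-space-derivative}.
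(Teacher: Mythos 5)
Your proposal is correct, and while it shares the paper's backbone---approximating solutions $u(t,x;s)$, a variation-of-constants representation for the derivative, a uniform bound on the accumulated damping exponent, and a final Arzel\`a--Ascoli passage $s\to-\infty$---it handles the Lipschitz bound on $u_x$ by a genuinely different device. The paper starts from the explicit formula \eqref{formula-for-derivative} for $u_x(t,x;s)$, takes a space difference quotient, splits it into four terms, and bounds each via Taylor expansions of the exponential factors combined with a decomposition of $[s,t]$ at the entry and exit times $t_{\rm first}(x_0;s)$ and $t_{\rm last}(x_0;s)$ of a window around $x_0$ into the reactive middle region $R_m(t;s)$. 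You instead difference the ODE satisfied by $v=u_x$ in $x$ to obtain a forced linear ODE for the second-level quotient $w^h$, and apply variation of constants once more; the $f_{uu}\cdot u_x$ contribution the paper extracts by Taylor-expanding exponentials arises for you naturally as the coefficient-difference source $R^h v$, and the four-term expansion and the case analysis in $t_{\rm first}$, $t_{\rm last}$ disappear. Your central estimate $\int_\tau^t f_u(\sigma,u(\sigma,x;s))\,d\sigma\le C_0$ uniformly in $s,\tau,t,x$ is the same geometric fact the paper encodes in \eqref{growth-period} and \eqref{space-derivative-of-f} (bounded middle-interval length $T$; $f_u=0$ before, $f_u\le 0$ after), repackaged as an integral bound and proved directly from $\dot X\ge c_{\min}$: the positive part of $f_u(\cdot,u(\cdot,x;s))$ is supported on a time set of measure at most $(h_+^*+h_-^*)/c_{\min}$. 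Your route is shorter and more structural, and would iterate cleanly to higher derivatives; the paper's yields the same conclusion with constants written out explicitly in terms of $T$.
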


We remark that since $u(t,x)$ is strictly decreasing in $x$, the uniform bound of $u_{x}(t,x)$ in \eqref{uniform-bd-space-derivative} is equivalent to $\inf_{(t,x)\in\R\times\R}u_{x}(t,x)>-\infty$. With the regularity, the profile function $\phi(t,x)=u(t,x+X(t))$ satisfies the following evolution equation
\begin{equation*}
\phi_{t}=J\ast\phi-\phi+\dot{X}(t)\phi_{x}+f(t,\phi),
\end{equation*}
which could be used to construct transition fronts if $X(t)$ can be first constructed (see \cite{NaRo14} for the work on \eqref{eqn-classical} in time heterogeneous monostable media).

Next, we study the uniform steepness of the transition front. We prove

\begin{thm}\label{thm-steepness}
Suppose $\rm(H1)$-$\rm(H3)$. Let $u(t,x)$ and $X(t)$ be as in Proposition\ref{prop-property-approximating-sol}$\rm(2)$. Then, for any $M>0$, there holds
\begin{equation*}
\sup_{t\in\R}\sup_{x\in[X(t)-M,X(t)+M]}u_{x}(t,x)<0.
\end{equation*}
\end{thm}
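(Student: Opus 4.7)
\textbf{Proof proposal for Theorem~\ref{thm-steepness}.} The plan is to argue by contradiction, reducing via a shift-and-compactness argument to a linear nonlocal equation for $u_x$ and then applying a strong minimum principle. Suppose the conclusion fails for some $M>0$; then there exist sequences $t_n\in\R$ and $x_n\in[X(t_n)-M,X(t_n)+M]$ with $u_x(t_n,x_n)\to 0$. Set $y_n:=x_n-X(t_n)\in[-M,M]$ and pass to a subsequence so that $y_n\to y^\star\in[-M,M]$. Define the shifts $u_n(t,x):=u(t+t_n,x+X(t_n))$ and $X_n(t):=X(t+t_n)-X(t_n)$; then $u_n$ solves
\[
(u_n)_t=J*u_n-u_n+f(t+t_n,u_n),
\]
with $X_n(0)=0$ and $\dot X_n\in[c_{\min},c_{\max}]$.

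By Theorem~\ref{thm-regularity} together with (H3), both $\{u_n\}$ and $\{(u_n)_x\}$ are uniformly bounded and uniformly Lipschitz in $x$, and differentiating the equation in $x$ (using that $|f_t|,|f_u|,|f_{uu}|$ are bounded) shows they are uniformly Lipschitz in $t$ as well. The family $\{f(\cdot+t_n,\cdot)\}$ is equicontinuous on $\R\times[0,1]$ by (H2), while $\{f_u(\cdot+t_n,\cdot)\}$ is uniformly Lipschitz in $u$ by (H3). Arzel\`a--Ascoli combined with a diagonal extraction produces a further subsequence along which $u_n\to u^\star$, $(u_n)_x\to u^\star_x$, $X_n\to X^\star$, and $f(\cdot+t_n,\cdot)\to f^\star$ (with $f^\star(t,\cdot)\in C^1$) all locally uniformly. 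The limit $u^\star$ solves $u^\star_t=J*u^\star-u^\star+f^\star(t,u^\star)$, inherits the exponential bounds of Proposition~\ref{prop-property-approximating-sol}(2)(iii) relative to $X^\star$ (so $u^\star$ is a transition front with $u^\star(0,-\infty)=1$ and $u^\star(0,\infty)=0$), is strictly decreasing in $x$ with $u^\star_x\leq 0$, and, by the very choice of $(t_n,x_n)$, satisfies $u^\star_x(0,y^\star)=0$.

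Set $w:=-u^\star_x\geq 0$. Differentiating the limit equation in $x$ gives the linear nonlocal equation $w_t=J*w-w+f^\star_u(t,u^\star)w$. Since $w\geq 0$ globally and $w(0,y^\star)=0$, the point $(0,y^\star)$ is a global minimum of $w$, so $w_t(0,y^\star)=0$ and the equation forces
\[
0=J*w(0,y^\star)=\int_{\R}J(y^\star-y)w(0,y)dy.
\]
Non-negativity of $J$ and $w$ then yields $w(0,y)=0$ for every $y$ with $y^\star-y\in\supp J$. By (H1), $\supp J$ is a closed symmetric set with non-empty interior, so $\supp J+\supp J$ contains an open neighbourhood of the origin and $\bigcup_{k\geq 1}k\cdot\supp J=\R$. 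Iterating the minimum-principle step at each new zero of $w(0,\cdot)$ propagates the zero set by $\supp J$ at every stage and eventually covers all of $\R$, so $w(0,\cdot)\equiv 0$. This means $u^\star(0,\cdot)$ is constant, contradicting $u^\star(0,-\infty)=1$ and $u^\star(0,\infty)=0$. The main technical obstacle is the rigorous passage to the linearized equation in the limit---where (H3) is essential to keep $f^\star$ smooth enough in $u$ so that $w$ indeed satisfies a linear equation with bounded coefficient---together with the propagation step when $\supp J$ is a proper subset of $\R$, for which the continuity, non-triviality and evenness of $J$ from (H1) are exactly what is needed to sweep out $\R$ in finitely many convolutions.
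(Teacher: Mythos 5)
Your proof is correct in spirit but takes a genuinely different route. The paper proves Theorem~\ref{thm-steepness} by first establishing the uniform steepness of the approximating solutions $u(t,x;s)$ (Theorem~\ref{thm-uniform-steepness-approx}), and the engine is a quantitative Harnack-type estimate (Lemma~\ref{lem-tech-1234567}): writing the linear equation for a difference quotient of $u(\cdot,\cdot;s)$, absorbing the nonlinear term by an exponential weight, and iterating $J\ast$ yields a lower bound of the form $-u_x(t,x;s)\geq C\int_{z-h}^{z+h}(-u_x(t_0,y;s))\,dy$ with an explicit, locally uniformly positive constant $C$. Combined with the control on $X(t;s)$ and the exponential tails, this gives an explicit $\alpha_M>0$, and Theorem~\ref{thm-steepness} follows by the locally uniform convergence of $u_x(t,x;s)$. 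Your proposal instead argues by contradiction, shifts in space and time, extracts a limit equation by compactness, and applies a strong-minimum-principle / sweeping argument for the linearized equation at a touching point. The two proofs buy different things: the paper's argument is constructive and gives a usable constant (and the intermediate Lemma~\ref{lem-tech-1234567} is re-used later in the asymptotic-stability argument), whereas your compactness argument is conceptually shorter but non-quantitative. Both rely crucially on the same structural inputs: nonnegativity and evenness of $J$ with nontrivial, hence propagating, support, and the bounded speed $\dot X\in[c_{\min},c_{\max}]$.

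\textbf{A gap you should close.} You write that after extracting limits, ``differentiating the limit equation in $x$ gives $w_t=J\ast w-w+f^\star_u(t,u^\star)w$,'' and you cite (H3) as what keeps $f^\star$ smooth enough. This is not automatic: (H2)--(H3) bound $f_t$, $f_u$, $f_{uu}$, but there is no control on $f_{tu}$, so the family $\{f_u(\cdot+t_n,\cdot)\}$ need not be equicontinuous in $t$; Arzel\`a--Ascoli gives you a Lipschitz limit $f^\star$, but you cannot assert that $f^\star$ is $C^1$ in $u$ with $f_u(\cdot+t_n,\cdot)\to f^\star_u$, nor that $w$ satisfies a linear equation with a continuous coefficient. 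The fix is to avoid the limit reaction coefficient entirely: work with $v_n:=(u_n)_x$, which solves $(v_n)_t=J\ast v_n-v_n+a_n v_n$ with $a_n=f_u(\cdot+t_n,u_n)$ merely bounded, and exploit that at the touching point the reaction term is negligible. Concretely, for the touching point $(0,y^\star)$ one has $v_n(0,y^\star)\to 0$ (by equi-Lipschitz continuity in $x$ and $y_n\to y^\star$), and integrating $(v_n)_t$ over $[0,h]$ and using $|a_nv_n|\leq C_0(L|s|+|v_n(0,y^\star)|)$ shows that $v^\star=u_x^\star$ is differentiable in $t$ at $(0,y^\star)$ with $v^\star_t(0,y^\star)=[J\ast v^\star](0,y^\star)$, independently of any limit coefficient. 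The same device applies at each new zero produced in your iteration. With this patch your propagation-of-zeros argument along the (open) positivity set of $J$ goes through as you describe, and the contradiction with the limiting front's nontrivial profile is reached.
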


A simple consequence of Theorem \ref{thm-regularity} and Theorem \ref{thm-steepness} is that the interface location at any constant value between $0$ and $1$ is continuously differentiable with finite speed (see Corollary \ref{cor-interface-at-const-value}).

Finally, we study the stability of transition fronts. Let $C_{\rm unif}^{b}(\R,\R)$ be the space of bounded and uniformly continuous functions on $\R$. For $u_{0}\in C_{\rm unif}^{b}(\R,\R)$, denote by $u(t,x;t_{0},u_{0})$ the unique solution of \eqref{main-eqn} with initial data $u(t_{0},\cdot;t_{0},u_{0})=u_{0}$. To state the result, we enhance the last assumption in $\rm(H2)$ and assume

\medskip

\noindent {\bf (H4)} {\it  there exist $\tilde{\theta}\in(\theta,1)$ and $\tilde{\beta}>0$ such that $f_{u}(t,u)\leq-\tilde{\beta}$ for all $(t,u)\in\R\times[\tilde{\theta},2]$.  Also, $f(t,u)=0$ for $(t,u)\in\R\times(-\infty,0)$.}

\medskip

Let $M_{1}>0$ be such that for any $t\in\R$
\begin{equation}\label{condition-1}
u(t,x)\geq\frac{1+\tilde{\theta}}{2}\,\,\text{if}\,\,x-X(t)\leq-M_{1}\quad\text{and}\quad u(t,x)\leq\frac{\theta}{2}\,\,\text{if}\,\,x-X(t)\geq M_{1},
\end{equation}
where $\tilde{\theta}$ is as in $\rm(H4)$. Such an $M_{1}$ exists by Proposition\ref{prop-property-approximating-sol}$\rm(2)(iii)$.
For given $\alpha>0$, let $\Ga_{\al}:\R\to[0,1]$ be a smooth nonincreasing function satisfying
\begin{equation}\label{parameter-1}
\Ga_\alpha(x)=\begin{cases}
1, & x\leq -M_{1}-1,\\
e^{-\al(x-M_{1})},& x\geq M_{1}+1.
\end{cases}
\end{equation}
This function is introduced for making up the lack of asymptotic stability of the equilibrium $0$ (see e.g. \cite{MNRR09, ShSh14-1}). We prove

\begin{thm}\label{thm-stability}
Suppose $\rm(H1)$-$\rm(H4)$.
\begin{itemize}
\item[\rm(1)] There is $\alpha_0>0$ such that for any $0<\alpha\le\alpha_0$, there are $\epsilon_0=\epsilon_0(\alpha)$, $\omega=\omega(\alpha)$, and $A=A(\alpha)$  satisfying that for  any $u_{0}:\R\ra[0,1]$, $u_0\in C_{\rm unif}^{b}(\R,\R)$,
if  there exist $t_{0}\in\R$,  $\ep\in(0,\ep_{0}]$,   $\zeta_{0}^{\pm}$ such that
\begin{equation}\label{initial-condition}
\begin{split}
&u(t_{0},x-\zeta_{0}^{-})-\ep\Ga_\alpha(x-\zeta_{0}^{-}-X(t_{0}))\\
&\quad\quad\leq u_{0}(x)\leq u(t_{0},x-\zeta_{0}^{+})+\ep\Ga_\alpha(x-\zeta_{0}^{+}-X(t_{0}))
\end{split}
\end{equation}
for all $x\in\R$, then, there holds
\begin{equation*}
\begin{split}
&u(t,x-\zeta^{-}(t))-q(t)\Ga_\alpha(x-\zeta^{-}(t)-X(t))\\
&\quad\quad\leq u(t,x;t_{0},u_{0})\leq u(t,x-\zeta^{+}(t))+q(t)\Ga_\alpha(x-\zeta^{+}(t)-X(t))
\end{split}
\end{equation*}
for all $x\in\R$ and $t\geq t_{0}$, where
\begin{equation*}
\zeta^{\pm}(t)=\zeta_{0}^{\pm}\pm\frac{A\ep}{\om}(1-e^{-\om(t-t_{0})})\quad\text{and}\quad q(t)=\ep e^{-\om(t-t_{0})}.
\end{equation*}

\item[\rm(2)] Let $u(t,x)$ and $X(t)$ be as in Proposition\ref{prop-property-approximating-sol}$\rm(2)$. Let $\beta_{0}>0$. Suppose $t_{0}\in\R$ and $u_{0}\in C_{\rm unif}^{b}(\R,\R)$ satisfy
\begin{equation*}
\begin{cases}
u_{0}:\R\to[0,1],\quad u_{0}(-\infty)=1;\\
\exists C>0\,\,\text{s.t.}\,\,|u_{0}-u(t_{0},x)|\leq Ce^{-\beta_{0}(x-X(t_{0}))}\,\,\text{for}\,\,x\in\R.
\end{cases}
\end{equation*}
Then, there exist $\om>0$ and $\tilde\epsilon_0>0$   such that for any $\ep\in(0,\tilde \ep_{0}]$ there are $\zeta^{\pm}=\zeta^{\pm}(\ep,u_{0})\in\R$ such that
\begin{equation*}
u(t,x-\zeta^{-})-\ep e^{-\om(t-t_{0})}\leq u(t,x;t_{0},u_{0})\leq u(t,x-\zeta^{+})+\ep e^{-\om(t-t_{0})}
\end{equation*}
for all $x\in\R$ and $t\geq t_{0}$.
\end{itemize}
\end{thm}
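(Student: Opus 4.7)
My plan is to prove Theorem~\ref{thm-stability}(1) by constructing a super-solution and a sub-solution of \eqref{main-eqn} of the form
\begin{equation*}
v^\pm(t,x) = u(t, x - \zeta^\pm(t)) \pm q(t)\Ga_\al(x - \zeta^\pm(t) - X(t)),
\end{equation*}
with $q(t) = \ep e^{-\om(t-t_0)}$ and $\zeta^\pm$ as in the statement (so that $\dot\zeta^\pm(t) = \pm A q(t)$), and then invoking the comparison principle for \eqref{main-eqn}, which is available since $f$ is Lipschitz in $u$ by (H2). The parameters $A$, $\om$ and the smallness threshold $\ep_0$ will be tuned as functions of $\al$ so that $v^+$ is a super-solution and $v^-$ a sub-solution; the hypothesis \eqref{initial-condition} then provides the ordering at $t=t_0$ and comparison propagates it forward.

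Using the equation satisfied by $u$, the super-solution inequality reduces to the pointwise estimate
\begin{equation*}
-\dot{\zeta}^+(t)\, u_x(t, x-\zeta^+) + q\bigl[\Ga_\al - J\ast\Ga_\al - \om\Ga_\al\bigr](\xi) - q(\dot{\zeta}^+ + \dot X(t))\Ga_\al'(\xi) + f(t, u(t,x-\zeta^+)) - f(t, v^+) \geq 0,
\end{equation*}
where $\xi = x - \zeta^+(t) - X(t)$. I would verify this in three regions of $\xi$. In the far-right region $\xi \gg 1$, the exponential tail bound from Proposition~\ref{prop-property-approximating-sol}(2)(iii) places $u(t, x-\zeta^+)$ below $\theta$, so (H2) and the extension in (H4) kill the nonlinear terms; the remaining linear inequality on $\Ga_\al \approx e^{-\al(\xi - M_1)}$ becomes $\al \dot X - \int J(y)(e^{\al y} - 1)\,dy - \om \geq 0$, which holds for $\al$ and $\om$ small by (H1) and $\dot X \geq c_{\min} > 0$. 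In the far-left region $\xi \ll -1$, one has $u \geq \tilde\theta$ and (H4) gives $f_u \leq -\tilde\be$, so Taylor expansion together with (H3) yields $f(t,u) - f(t, v^+) \geq \tilde\be\, q\Ga_\al - C q^2$, which dominates the other terms once $\ep_0$ is small. The middle region $|\xi| \lesssim 1$ is where uniform steepness (Theorem~\ref{thm-steepness}) becomes decisive: there $-u_x(t, x - \zeta^+) \geq \ka$ for some $\ka > 0$, and the shift gain $-\dot{\zeta}^+ u_x \geq A\ka q$ dominates every other $O(q)$ and $O(q^2)$ term provided $A$ is chosen large, the quadratic remainder being controlled by Theorem~\ref{thm-regularity} and (H3).

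For Part~(2), I would reduce to the hypothesis of Part~(1). Choosing $\al \leq \min(\al_0, \be_0)$, the decay $|u_0 - u(t_0, \cdot)| \leq C e^{-\be_0(\cdot - X(t_0))}$ is dominated by $\ep\Ga_\al$ on the far-right for an appropriate $\ep$, while on the bulk $x - X(t_0) \leq M_1 + 1$ the uniform steepness of Theorem~\ref{thm-steepness} lets a sufficiently large pair of opposite-sign shifts $\zeta_0^\pm$ (depending on $u_0$ through its deviation from $u(t_0,\cdot)$) absorb the discrepancy, so \eqref{initial-condition} holds at $t_0$. Part~(1) then produces shifts $\zeta^\pm(t)$ that remain bounded as $t\to\infty$ and an exponentially small remainder $q(t)\Ga_\al \leq \ep e^{-\om(t-t_0)}$, and renaming the limiting shifts yields the stated form. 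The main obstacle is the middle-region estimate in Part~(1): the shift gain $A\ka q$ must simultaneously dominate the nonlocal cross term $q[J\ast\Ga_\al - \Ga_\al]$, the first-derivative cross term of order $\al q$, and the $O(q^2)$ Taylor remainder, while $\om$ is constrained from above by the outer-region balance; compatibility of these constraints is what fixes $A$, $\om$ and $\ep_0$ as functions of a small $\al$.
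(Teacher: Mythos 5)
Your plan coincides with the paper's proof: both construct perturbed shifted fronts $u(t,x-\zeta^\pm(t))\pm q(t)\Gamma_\alpha(x-\zeta^\pm(t)-X(t))$, verify the sub/super-solution differential inequality in the same three regions (far-left via $f_u\le -\tilde\beta$ from (H4), far-right via $f\equiv 0$ and the $J*\Gamma_\alpha$ estimate from Lemma~\ref{lem-technical}, middle via the uniform steepness $u_x\le -C_{\rm steep}$ with $A$ chosen large), and then invoke the comparison principle; Part (2) is likewise reduced to Part (1) by picking $\alpha\le\min(\alpha_0,\beta_0)$ and large opposite shifts $\zeta_0^\pm$. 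One small overstatement: in the middle and far-left regions the paper needs only the Lipschitz bound $|f(t,a)-f(t,b)|\le C_{f_u}|a-b|$ and the mean value inequality, so there are no $O(q^2)$ Taylor remainders to control and no appeal to (H3) or Theorem~\ref{thm-regularity} is required there; your argument still closes, just with an unnecessary extra step.
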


Based on Theorem \ref{thm-stability} and the ``squeezing technique" (see e.g. \cite{Ch97,MNRR09,Sh99-1,ShSh14-1}), we obtain the asymptotic stability.

\begin{thm}\label{thm-asymptotic-stability}
Suppose $\rm(H1)$-$\rm(H4)$. Let $u(t,x)$ and $X(t)$ be as in Proposition\ref{prop-property-approximating-sol}$\rm(2)$. Let $\beta_{0}>0$. Suppose $t_{0}\in\R$ and $u_{0}\in C_{\rm unif}^{b}(\R,\R)$ satisfy
\begin{equation*}
\begin{cases}
u_{0}:\R\to[0,1],\quad u_{0}(-\infty)=1;\\
\exists C>0\,\,\text{s.t.}\,\,|u_{0}-u(t_{0},x)|\leq Ce^{-\beta_{0}(x-X(t_{0}))}\,\,\text{for}\,\,x\in\R.
\end{cases}
\end{equation*}
Then, there exist $C=C(u_{0})>0$, $\zeta_{*}=\zeta_{*}(u_{0})\in\R$ and $r=r(\beta_{0})>0$ such that
\begin{equation*}
\sup_{x\in\R}|u(t,x;t_{0},u_{0})-u(t,x-\zeta_{*})|\leq Ce^{-r(t-t_{0})}
\end{equation*}
for all $t\geq t_{0}$.
\end{thm}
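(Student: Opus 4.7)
The plan is to derive Theorem~\ref{thm-asymptotic-stability} from Theorem~\ref{thm-stability}(2) by an iterative squeezing argument in the spirit of \cite{Ch97,MNRR09,ShSh14-1}. I first invoke Theorem~\ref{thm-stability}(2) to obtain, for a fixed $\epsilon\in(0,\tilde{\epsilon}_0]$, constant shifts $\zeta_0^{\pm}=\zeta^{\pm}(\epsilon,u_0)$ and a rate $\omega>0$ such that
\[
u(t,x-\zeta_0^{-})-\epsilon e^{-\omega(t-t_0)}\le u(t,x;t_0,u_0)\le u(t,x-\zeta_0^{+})+\epsilon e^{-\omega(t-t_0)}
\]
for all $t\ge t_0$ and $x\in\R$. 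This is the only step where the exponential decay hypothesis on $u_0-u(t_0,\cdot)$ enters; what remains is a purely geometric squeezing showing that $\zeta_0^{-}$ and $\zeta_0^{+}$ can be pushed together at an exponential rate.

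The heart of the argument is the following inductive claim: there exist $T>0$, $\rho\in(0,1)$, and $K>0$ such that, setting $t_n=t_0+nT$, one can construct nested shifts $\zeta_n^{-}\in[\zeta_{n-1}^{-},\zeta_{n-1}^{+}]$ and $\zeta_n^{+}\in[\zeta_n^{-},\zeta_{n-1}^{+}]$ with $\zeta_n^{+}-\zeta_n^{-}\le\rho(\zeta_{n-1}^{+}-\zeta_{n-1}^{-})$, satisfying
\[
u(t,x-\zeta_n^{-})-K\rho^n e^{-\omega(t-t_n)}\le u(t,x;t_0,u_0)\le u(t,x-\zeta_n^{+})+K\rho^n e^{-\omega(t-t_n)}
\]
for all $t\ge t_n$ and $x\in\R$. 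The induction step is carried out by building sub/supersolutions of \eqref{main-eqn} of a perturbed form based on the shifted fronts $u(t,x-\zeta_{n-1}^{\pm})$, with a small correction proportional to $\Gamma_\alpha(x-\zeta_{n-1}^{\pm}-X(t))$ and coefficient governed by an auxiliary ODE, chosen so that over the time window $[t_{n-1},t_n]$ the supersolution forces $\zeta_n^{+}<\zeta_{n-1}^{+}$ and the subsolution forces $\zeta_n^{-}>\zeta_{n-1}^{-}$, contracting the gap by a fixed factor $\rho$. The construction exploits three ingredients: the uniform steepness from Theorem~\ref{thm-steepness}, which makes the small vertical error at $t_{n-1}$ interchangeable with a comparable horizontal shift; the uniform negativity $f_u(t,u)\le-\tilde\beta$ on $[\tilde\theta,2]$ granted by (H4), providing strict contraction where $u$ is close to~$1$; and the uniform exponential decay of the front from Proposition~\ref{prop-property-approximating-sol}(2)(iii), controlling the region where $u$ is close to~$0$. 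All constants must be chosen uniformly in $n$.

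Granting the inductive claim, the monotone bounded sequences $\zeta_n^{\pm}$ converge to a common limit $\zeta_*\in\R$ with $|\zeta_n^{\pm}-\zeta_*|\le\rho^n(\zeta_0^{+}-\zeta_0^{-})/(1-\rho)$. For $t\in[t_n,t_{n+1}]$, combining the level-$n$ sandwich with the uniform Lipschitz bound on $u(t,\cdot)$ from Theorem~\ref{thm-regularity} yields
\[
|u(t,x;t_0,u_0)-u(t,x-\zeta_*)|\le L\,|\zeta_n^{\pm}-\zeta_*|+K\rho^n\le C\rho^n\le Ce^{-r(t-t_0)}
\]
with $r=-\log\rho/T>0$, which is the desired exponential stability.

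The hardest part will be the inductive step, where a fixed contraction factor $\rho<1$ must be obtained at every iteration independently of the current gap size. In the classical reaction-diffusion setting of \cite{Ch97,MNRR09}, the analogous squeezing relies on parabolic smoothing and strong maximum principles. The nonlocal dispersal here affords no such smoothing, so the sub/supersolutions have to be built directly from shifts of the front and verified via the comparison principle for \eqref{main-eqn}. The time heterogeneity of $f$ further precludes spectral reductions of the linearization. However, the uniform geometric structure granted by (H4) together with Proposition~\ref{prop-property-approximating-sol}(2)(iii) and Theorems~\ref{thm-regularity}--\ref{thm-steepness} provides bounds that are uniform in $n$, enabling a contraction factor $\rho<1$ at each step and closing the squeezing.
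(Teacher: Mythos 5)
Your overall plan---apply Theorem~\ref{thm-stability}(2) once, then iterate a squeezing step to force $\zeta_0^{-}$ and $\zeta_0^{+}$ to merge geometrically---matches the paper's strategy at the top level, and the final reassembly step (uniform Lipschitz bound plus exponentially small shift and vertical error $\Rightarrow$ exponential convergence) is fine. However, the inductive step as you describe it has a genuine gap, and it is precisely the hard part you flag.

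You assert that sub/supersolutions built from shifted fronts with a $\Gamma_\alpha$-correction ``force $\zeta_n^{+}<\zeta_{n-1}^{+}$ and the subsolution forces $\zeta_n^{-}>\zeta_{n-1}^{-}$.'' But the sub/supersolutions used in Theorem~\ref{thm-stability}(1) move the shifts \emph{apart}: $\zeta^{-}(t)$ decreases and $\zeta^{+}(t)$ increases as $t$ grows, and this drift is what pays for the exponential decay of $q(t)$. The comparison principle with these objects alone will therefore never contract the gap, so the induction does not close on the argument you give. The paper's contraction mechanism is different and relies on two ingredients you do not invoke. First, a pigeonhole dichotomy: at the current time $\tau$, with gap $\delta$, at least one of two integral inequalities over the interface window $[X(\tau)-\tfrac12,X(\tau)+\tfrac12]$ must hold (either $u(\tau,\cdot+\zeta;t_0)$ already sits strictly above $u(\tau,\cdot)$ there by an amount proportional to $\min\{1,\delta\}$ in the integral sense, or the symmetric estimate against the upper front holds). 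Second, a nonlocal Harnack-type estimate (Lemma~\ref{lem-tech-1234567}, proved via iterated convolution $J^{N}$) upgrades such an integral gap at time $\tau$ to a \emph{pointwise} gap at time $\tau+\sigma$ on any bounded window around $X(\tau+\sigma)$, with a constant uniform in $\tau$. Combined with the exponential decay of $u_x$ away from the interface (Lemma~\ref{lem-decaying-space-derivative}) to control the tails, this allows one bounding front to be slid inward by $\tilde{\epsilon}\min\{1,\delta\}$ at the small cost of a bounded increase in vertical error, which then decays again by Theorem~\ref{thm-stability}(1). This is the content of Lemma~\ref{lem-key-asymptotic-stability}, and it is the crux; uniform steepness alone (``vertical error interchangeable with horizontal shift'') only gives the harmless direction of the trade, not the contraction.

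There is also a quantitative inaccuracy in your stated inductive claim: the gap reduction per step is $\tilde{\epsilon}\min\{1,\delta\}$ (up to a small additive loss), which is a \emph{multiplicative} contraction only once $\delta\le 1$. If the initial gap $\delta_0=\zeta_0^{+}-\zeta_0^{-}$ exceeds $1$, the paper first runs an \emph{additive} iteration to drive $\delta$ below $1$, and only then switches to the geometric contraction $\tilde{\delta}_n\le(1-\tilde{\epsilon}/2)^n$. Your ansatz $\zeta_n^{+}-\zeta_n^{-}\le\rho(\zeta_{n-1}^{+}-\zeta_{n-1}^{-})$ from the start would not be justified by the mechanism actually available.
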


We point out, allowing the solution to develop into the shape satisfying the condition in Theorem \ref{thm-stability} at a later time, Theorem \ref{thm-stability}$\rm(2)$ and Theorem \ref{thm-asymptotic-stability} are true for more general initial data (see Corollary \ref{cor-stability-12345} and Corollary \ref{cor-asymptotic-stability-12345}).

The rest of the paper is organized as follows. In Section \ref{sec-regularity}, we study the space regularity of $u(t,x)$ and prove Theorem \ref{thm-regularity}. In Section \ref{sec-steepness}, we study the uniform steepness of $u(t,x)$ and prove Theorem \ref{thm-steepness}. In Section \ref{sec-stability}, we study the stability of $u(t,x)$ and prove Theorem \ref{thm-stability}. In Section \ref{sec-asymptotic-stability}, we study the asymptotic stability of $u(t,x)$ and prove Theorem \ref{thm-asymptotic-stability}. We also include an appendix, Appendix \ref{sec-app-cp}, on comparison principles for convenience.


\section{Regularity of transition fronts}\label{sec-regularity}

In this section, we study the regularity of $u(t,x)$ and prove Theorem \ref{thm-regularity}. Throughout this section, we assume $\rm(H1)$-$\rm(H3)$. To prove Theorem \ref{thm-regularity}, we first investigate the space regularity of $u(t,x;s)$. We have

\begin{thm}\label{thm-regularity-approx}
For any $s<0$ and $t\geq s$, $u(t,x;s)$ is continuously differentiable in $x$. Moreover,
\begin{itemize}
\item[\rm(i)]$u_{x}(t,x;s)$ is uniformly bounded, that is,
\begin{equation*}
\sup_{x\neq y\atop s<0,t\geq s}|u_{x}(t,x;s)|<\infty;
\end{equation*}
\item[\rm(ii)] $u_{x}(t,x;s)$ is uniformly Lipschitz continuous in space, that is,
\begin{equation*}
\sup_{x\neq y\atop s<0,t\geq s}\bigg|\frac{u_{x}(t,x;s)-u_{x}(t,y;s)}{x-y}\bigg|<\infty.
\end{equation*}
\end{itemize}
\end{thm}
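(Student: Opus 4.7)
The plan is to construct $u_x(t,x;s)$ as the unique bounded solution of a linear nonlocal equation, verify the identification, and then iterate the argument one level up to get the Lipschitz estimate.

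\smallskip

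\noindent\textbf{Step 1 (Derivation and construction of a candidate for $u_x$).} Formally differentiating \eqref{main-eqn} in $x$, and using $[J\ast u]_{x}=J\ast u_{x}$ (which is legitimate since $J\in C^{1}$), the function $v=u_{x}(\cdot,\cdot;s)$ should satisfy
\begin{equation*}
v_{t}=J\ast v-v+f_{u}(t,u(t,x;s))\,v,\qquad v(s,x;s)=\phi_{\min}'(x-y_{s}).
\end{equation*}
The coefficient $f_{u}(t,u(t,x;s))$ is uniformly bounded by $\rm(H2)$, and the initial datum is bounded and uniformly Lipschitz by \eqref{Lip-TW}, so this linear nonlocal equation admits a unique bounded classical solution $v(t,x;s)$, continuous in $x$.

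\smallskip

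\noindent\textbf{Step 2 (Identification and uniform bound (i)).} For $h\neq 0$, the difference quotient $w_{h}(t,x)=[u(t,x+h;s)-u(t,x;s)]/h$ satisfies
\begin{equation*}
(w_{h})_{t}=J\ast w_{h}-w_{h}+a_{h}(t,x)\,w_{h},\quad a_{h}(t,x)=\int_{0}^{1}f_{u}\!\bigl(t,u(t,x;s)+r[u(t,x+h;s)-u(t,x;s)]\bigr)dr,
\end{equation*}
with initial datum converging uniformly to $\phi_{\min}'(x-y_{s})$. Subtracting the equation for $v$, using $a_{h}\to f_{u}(t,u)$ locally uniformly (by continuity of $f_{u}$ and of $u$), and the standard Gronwall estimate for the bounded evolution $J\ast -\mathrm{I}+\text{bounded potential}$, one obtains $w_{h}\to v$ locally uniformly on $\R\times\R$. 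Hence $u_{x}$ exists, equals $v$, and is continuous in $x$. For the uniform bound, the hypothesis \eqref{locally-uniform-Lip} gives $|w_{h}|\leq L$ for some $L$ independent of $h,s,t,x$; passing to the limit yields (i).

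\smallskip

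\noindent\textbf{Step 3 (Uniform Lipschitz of $u_{x}$, part (ii)).} I would repeat the difference-quotient scheme at the next level. Set $\delta_{h}(t,x)=v(t,x+h;s)-v(t,x;s)$; then
\begin{equation*}
(\delta_{h})_{t}=J\ast\delta_{h}-\delta_{h}+f_{u}(t,u(t,x+h;s))\,\delta_{h}+\bigl[f_{u}(t,u(t,x+h;s))-f_{u}(t,u(t,x;s))\bigr]v(t,x;s).
\end{equation*}
By $\rm(H3)$, $f_{uu}$ is bounded, so together with \eqref{locally-uniform-Lip} and (i) the inhomogeneous term is bounded by $C|h|$ uniformly in $(s,t,x)$; by \eqref{Lip-TW} the initial datum satisfies $|\delta_{h}(s,x)|\leq C|h|$ uniformly in $s$. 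The target is $|\delta_{h}(t,x)|\leq K|h|$ uniformly in $s<0$, $t\geq s$, $x\in\R$.

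\smallskip

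\noindent\textbf{Main obstacle.} Naive Gronwall only gives a bound that grows exponentially in $t-s$, so the final step is to convert the pointwise estimate into a time-uniform one. The plan is to exploit the sign structure of $f_{u}$: by $\rm(H2)$, $f_{u}(t,u)=0$ for $u\in[0,\theta]$ and $f_{u}(t,u)\leq 0$ for $u\in[\tilde\theta,1]$, so the potential in the linear equation is dissipative outside the bounded ``transition zone" $\{|x-X(t;s)|\leq M\}$ determined by Proposition \ref{prop-property-approximating-sol}$\rm(1)(ii)$--$\rm(iii)$. Combining a comparison with an exponentially decaying envelope travelling at speed $\dot X\in[c_{\min},c_{\max}]$ (to control $\delta_{h}$ in the dissipative outer region, using the exponential bounds of Proposition \ref{prop-property-approximating-sol}$\rm(1)(ii)$) with a direct estimate on the bounded inner region should yield the desired uniform bound. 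Dividing by $|h|$ and letting $h\to 0$ then gives (ii). This upgrading of the pointwise estimate into a uniform-in-$(s,t)$ Lipschitz bound is where the structure of the ignition nonlinearity enters in an essential way and constitutes the main technical difficulty.
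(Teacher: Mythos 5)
Your Steps 1--2 correctly establish the continuity of $u_x$ and the uniform bound (i), and your driving idea in Step 3 -- exploit the dissipation $f_u\le 0$ outside the bounded transition zone, whose occupation time is bounded, to defeat exponential Gronwall growth -- is the right mechanism and is the one the paper uses. The gap is the nonlocal term: you keep $J\ast\delta_h$ as an unknown coupling and never explain how the sketched comparison argument deals with the fact that $J\ast$ transports information from the set $\{\theta<u<\tilde{\theta}\}$ (where $f_u$ may be positive) into the dissipative outer regions. There is no pointwise maximum-principle argument that decouples the regions, and a two-sided moving-envelope supersolution would have to cope with the sign-changing $\delta_h$ together with nonlocal transport across the interface; this is a genuine missing step, not a routine one, and it is why your ``main obstacle'' paragraph remains a plan rather than an argument.

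The device you are missing, and which makes the paper's proof go through cleanly, is integration by parts in the convolution. Since $J\in C^1$ with $J'\in L^1$ by (H1), one has $[J\ast u_x](x)=\int_\R J'(y)\,u(t,x-y;s)\,dy=:b(t,x;s)$, and $|b|\le\|J'\|_{L^1}$ because $u\in(0,1)$. At the next level, $[J\ast\delta_h](x)=\int_\R J'(y)\,[u(t,x+h-y;s)-u(t,x-y;s)]\,dy$, which is bounded by $\|J'\|_{L^1}\,C_4\,|h|$ by \eqref{locally-uniform-Lip}; in other words the nonlocal term is not an unknown coupling but an a priori $O(|h|)$ forcing. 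With this, the equation for $u_x$ collapses to a scalar inhomogeneous linear ODE at each fixed $x$, with the explicit Duhamel formula \eqref{formula-for-derivative}, and the equation for $\delta_h$ has $O(|h|)$ forcing and $O(|h|)$ initial data. Your dissipation/transition-time mechanism then applies pointwise in $x_0$: split $[s,t]$ at the first and last times $t_{\rm first}(x_0;s)$, $t_{\rm last}(x_0;s)$ that $x_0$ lies in the middle region $R_m(t;s)$ (these differ by at most a uniform $T$ because $\dot X\in[c_{\min},c_{\max}]$), use $1-f_u\ge1$ outside that window and boundedness inside. This is precisely how the paper estimates the four terms arising from the difference quotient of \eqref{formula-for-derivative}. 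Once you insert this integration-by-parts observation, your outline closes.
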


Assuming Theorem \ref{thm-regularity-approx}, let us prove Theorem \ref{thm-regularity}.

\begin{proof}[Proof of Theorem \ref{thm-regularity}]
It follows from Proposition \ref{prop-property-approximating-sol}$\rm(2)(i)$, Theorem \ref{thm-regularity-approx}, Arzel\`{a}-Ascoli theorem and the diagonal argument. More precisely, besides $u(t,x;s)\to u(t,x)$ and $ u_{t}(t,x;s)\to u_{t}(t,x)$ locally uniformly as in Proposition \ref{prop-property-approximating-sol}$\rm(2)(i)$ we also have
\begin{equation}\label{local-unifrom-convergence-derivative}
u_{x}(t,x;s)\to u_{x}(t,x)\quad\text{locally uniformly in}\,\,(t,x)\in\R\times\R
\end{equation}
as $s\to-\infty$ along some subsequence. The properties of $u(t,x)$ then inherit from that of $u(t,x;s)$.
\end{proof}

In the rest of this section, we prove Theorem \ref{thm-regularity-approx}.

\begin{proof}[Proof of Theorem \ref{thm-regularity-approx}]
$\rm(i)$ Setting
\begin{equation*}
v^{\eta}(t,x;s):=\frac{u(t,x+\eta;s)-u(t,x;s)}{\eta}.
\end{equation*}
By \eqref{locally-uniform-Lip}, $\sup_{x\in\R,\eta\neq0\atop s<0,t\geq s}|v^{\eta}(t,x;s)|<\infty$. Clearly, $v^{\eta}(t,x;s)$ satisfies
\begin{equation}\label{an-equation-1}
v^{\eta}_{t}(t,x;s)=\int_{\R}J(x-y)v^{\eta}(t,y;s)dy-v^{\eta}(t,x;s)+a^{\eta}(t,x;s)v^{\eta}(t,x;s),
\end{equation}
where
\begin{equation*}
a^{\eta}(t,x;s)=\frac{f(t,u(t,x+\eta;s))-f(t,u(t,x;s))}{u(t,x+\eta;s)-u(t,x;s)}
\end{equation*}
is uniformly bounded by $\rm(H2)$. Setting
\begin{equation*}
b^{\eta}(t,x;s):=\int_{\R}J(x-y)v^{\eta}(t,y;s)dy=\int_{\R}\frac{J(x-y+\eta)-J(x-y)}{\eta}u(t,y;s)dy,
\end{equation*}
we see that $\sup_{x\in\R,\eta\neq0\atop s<0,t\geq s}|b^{\eta}(t,x;s)|<\infty$, since $J'\in L^{1}(\R)$ and $u(t,x;s)\in(0,1)$.

The solution of \eqref{an-equation-1} is given by
\begin{equation}\label{ODIs-sol}
v^{\eta}(t,x;s)=v^{\eta}(s,x;s)e^{-\int_{s}^{t}(1-a^{\eta}(\tau,x;s))d\tau}+\int_{s}^{t}b^{\eta}(r,x;s)e^{-\int_{r}^{t}(1-a^{\eta}(\tau,x;s))d\tau}dr.
\end{equation}
Notice as $\eta\to0$, the following pointwise limits hold:
\begin{equation*}
\begin{split}
v^{\eta}(s,x;s)&=\frac{\phi_{\min}(x+\eta-y_{s})-\phi_{\min}(x-y_{s})}{\eta}\to\phi_{\min}'(x-y_{s}),\\
a^{\eta}(t,x;s)&\to f_{u}(t,u(t,x;s))\quad\text{and}\\
b^{\eta}(t,x;s)&\to\int_{\R}J'(x-y)u(t,y;s)dy,
\end{split}
\end{equation*}
where $\phi_{\min}$ is as in \eqref{tw-homo}.
Then, setting $\eta\to0$ in \eqref{ODIs-sol}, we conclude from the dominated convergence theorem that for any $s<0$, $t\geq s$ and $x\in\R$, the limit $u_{x}(t,x;s)=\lim_{\eta\to0}v^{\eta}(t,x;s)$ exists and
\begin{equation}\label{formula-for-derivative}
\begin{split}
u_{x}(t,x;s)&=\phi'_{\min}(x-y_{s})e^{-\int_{s}^{t}(1-f_{u}(\tau,u(\tau,x;s)))d\tau}+\int_{s}^{t}b(r,x;s)e^{-\int_{r}^{t}(1-f_{u}(\tau,u(\tau,x;s)))d\tau}dr,
\end{split}
\end{equation}
where $b(t,x;s)=\int_{\R}J'(x-y)u(t,y;s)dy=\int_{\R}J'(y)u(t,x-y;s)dy$. In particular, for any $s<0$ and $t\geq s$, $u(t,x;s)$ is continuously differentiable in $x$. The uniform boundedness of $u_{x}(t,x;s)$, i.e., $\sup_{x\neq y\atop s<0,t\geq s}|u_{x}(t,x;s)|<\infty$, then follows from \eqref{locally-uniform-Lip}.

$\rm(ii)$ Since $u_{x}(t,x;s)$ is uniformly bounded by $\rm(i)$, we trivially have
\begin{equation*}
\forall\de>0,\quad\sup_{|x-y|\geq\de\atop s<0,t\geq s}\bigg|\frac{u_{x}(t,x;s)-u_{x}(t,y;s)}{x-y}\bigg|<\infty.
\end{equation*}
Thus, to show the uniform Lipschitz continuity of $u_{x}(t,x;s)$, it suffices to show the local uniform Lipschitz continuity, i.e.,
\begin{equation}\label{local-uniform-Lip-derivative}
\forall\de>0,\quad\sup_{|x-y|\leq\de\atop s<0,t\geq s}\bigg|\frac{u_{x}(t,x;s)-u_{x}(t,y;s)}{x-y}\bigg|<\infty.
\end{equation}

To this end, we fix $\de>0$. Let $X(t;s)$ and $X_{\la}(t;s)$ for $\la\in(0,1)$ be as in Proposition \ref{prop-property-approximating-sol}$\rm(1)(ii)$ and define
\begin{equation*}
L_{1}=\de+\sup_{s<0,t\geq s}\big|X_{\theta}(t;s)-X(t;s)\big|\quad\text{and}\quad
L_{2}=\de+\sup_{s<0,t\geq s}\big|X_{\tilde{\theta}}(t;s)-X(t;s)\big|,
\end{equation*}
where $\tilde{\theta}\in(\theta,1)$ is given in $\rm(H2)$. Notice $L_{1}<\infty$ and $L_{2}<\infty$ by the uniform exponential decaying estimates in Proposition \ref{prop-property-approximating-sol}$\rm(1)(ii)$. Then, for any $x\in\R$ and $|\eta|\leq\de$ we have
\begin{itemize}
\item if $x\geq X(t;s)+L_{1}$, then $x+\eta\geq x-\de\geq X_{\theta}(t;s)$, which implies that $u(t,x+\eta;s)\leq\theta$ by monotonicity,
and hence
\begin{equation}\label{estimate-aux-1}
f_{u}(t,u(t,x+\eta;s))=0;
\end{equation}

\item if $x\leq X(t;s)-L_{2}$, then $x+\eta\leq x+\de\leq X_{\tilde{\theta}}(t;s)$, which implies that $u(t,x+\eta;s)\geq\tilde{\theta}$ by monotonicity, and hence by $\rm(H2)$,
\begin{equation}\label{estimate-aux-2}
f_{u}(t,u(t,x+\eta;s))\leq0.
\end{equation}
\end{itemize}
According to \eqref{estimate-aux-1} and \eqref{estimate-aux-2}, we consider time-dependent disjoint decompositions of $\R$ into
\begin{equation*}
\R=R_{l}(t;s)\cup R_{m}(t;s)\cup R_{r}(t;s),
\end{equation*}
where
\begin{equation}\label{moving-regions}
\begin{split}
R_{l}(t;s)&=(-\infty,X(t;s)-L_{2}),\\
R_{m}(t;s)&=[X(t;s)-L_{2}, X(t;s)+L_{1}]\quad\text{and}\\
R_{r}(t;s)&=(X(t;s)+L_{1},\infty).
\end{split}
\end{equation}

For $s<0$ and $x_{0}\in\R$, let $t_{\rm first}(x_{0};s)$ be the first time that $x_{0}$ is in $R_{m}(t;s)$, that is,
\begin{equation*}
t_{\rm first}(x_{0};s)=\min\big\{t\geq s\big|x_{0}\in R_{m}(t;s)\big\},
\end{equation*}
and $t_{\rm last}(x_{0};s)$ be the last time that $x_{0}$ is in $R_{m}(t;s)$, that is,
\begin{equation*}
t_{\rm last}(x_{0};s)=\max\big\{t_{0}\in\R\big|x_{0}\in R_{m}(t_{0};s)\,\,\text{and}\,\,x_{0}\notin R_{m}(t,s)\,\,\text{for}\,\,t>t_{0}\big\}.
\end{equation*}
Since $\dot{X}(t;s)\geq c_{\min}>0$ by Proposition \ref{prop-property-approximating-sol}$\rm(1)(ii)$, if $x_{0}\in R_{l}(s;s)$, then $x_{0}\in R_{l}(t;s)$ for all $t>s$. In this case, $t_{\rm first}(x_{0};s)$ and $t_{\rm last}(x_{0};s)$ are not well-defined, but it will not cause any trouble. We see that $t_{\rm first}(x_{0};s)$ and $t_{\rm last}(x_{0};s)$ are well-defined only if $x_{0}\notin R_{l}(s;s)$. As a simple consequence of $\dot{X}(t;s)\in[c_{\min},c_{\max}]$ in Proposition \ref{prop-property-approximating-sol}$\rm(1)(ii)$ and the fact that the length of $R_{m}(t;s)$ is $L_{1}+L_{2}$, we have
\begin{equation}\label{growth-period}
T=T(\de):=\sup_{s<0,x_{0}\notin R_{l}(s;s)}\big[t_{\rm last}(x_{0};s)-t_{\rm first}(x_{0};s)\big]<\infty.
\end{equation}
Moreover, we see that for any $|\eta|\leq\de$,
\begin{equation}\label{space-derivative-of-f}
\begin{split}
f_{u}(t,u(t,x_{0}+\eta;s))&=0\quad\text{if}\quad t\in[s,t_{\rm first}(x_{0};s)],\\
f_{u}(t,u(t,x_{0}+\eta;s))&\leq0\quad\text{if}\quad t\geq t_{\rm last}(x_{0};s).
\end{split}
\end{equation}

We now show that
\begin{equation}\label{local-uniform-Lip-derivative-de}
\sup_{x_{0}\in\R,0<|\eta|\leq\de\atop s<0,t\geq s}\bigg|\frac{u_{x}(t,x_{0}+\eta;s)-u_{x}(t,x_{0};s)}{\eta}\bigg|<\infty.
\end{equation}
Using \eqref{formula-for-derivative}, we have
\begin{equation*}
\begin{split}
&\frac{u_{x}(t,x_{0}+\eta;s)-u_{x}(t,x_{0};s)}{\eta}\\
&\quad\quad=\underbrace{\frac{\phi_{\min}'(x_{0}+\eta-y_{s})-\phi_{\min}'(x_{0}-y_{s})}{\eta}e^{-\int_{s}^{t}(1-f_{u}(\tau,u(\tau,x_{0}+\eta;s)))d\tau}}_{\rm (I)}\\
&\quad\quad\quad+\underbrace{\phi_{\min}'(x_{0}-y_{s})\frac{e^{-\int_{s}^{t}(1-f_{u}(\tau,u(\tau,x_{0}+\eta;s)))d\tau}-e^{-\int_{s}^{t}(1-f_{u}(\tau,u(\tau,x_{0};s)))d\tau}}{\eta}}_{\rm (II)}\\
&\quad\quad\quad+\underbrace{\int_{s}^{t}\frac{b(r,x_{0}+\eta;s)-b(r,x_{0};s)}{\eta}e^{-\int_{r}^{t}(1-f_{u}(\tau,u(\tau,x_{0};s)))d\tau}dr}_{\rm (III)}\\
&\quad\quad\quad+\underbrace{\int_{s}^{t}b(r,x_{0};s)\frac{e^{-\int_{r}^{t}(1-f_{u}(\tau,u(\tau,x_{0}+\eta;s)))d\tau}-e^{-\int_{r}^{t}(1-f_{u}(\tau,u(\tau,x_{0};s)))d\tau}}{\eta}dr}_{\rm (IV)}.
\end{split}
\end{equation*}
Hence, it suffice to bound terms $\rm (I)$-$\rm (IV)$. To do so, we need to consider three cases: $x_{0}\in R_{l}(s;s)$, $x_{0}\in R_{m}(s;s)$ and $x_{0}\in R_{r}(s;s)$. We here focus on the last case, i.e., $x_{0}\in R_{r}(s;s)$, which is the most involved one. The other two cases are simpler and can be treated similarly. Also, for fixed $s<0$ and $x_{0}\in R_{r}(s;s)$, we will focus on $t\geq t_{\rm last}(x_{0};s)$; the case with $t\in[t_{\rm first}(x_{0};s),t_{\rm last}(x_{0};s)]$ or $t\leq t_{\rm first}(x_{0};s)$ will be clear. Thus, we assume $x_{0}\in R_{r}(s;s)$ and $t\geq t_{\rm last}(x_{0};s)$.

We will frequently use the following estimates: for any $|\tilde{\eta}|\leq\de$ there hold
\begin{equation}\label{some-estimate}
\begin{split}
e^{-\int_{r}^{t_{\rm first}(x_{0};s)}(1-f_{u}(\tau,u(\tau,x_{0}+\tilde{\eta};s)))d\tau}&=e^{-(t_{\rm first}(x_{0};s)-r)},\quad r\in[s,t_{\rm first}(x_{0};s)]\\
e^{-\int_{r}^{t_{\rm last}(x_{0};s)}(1-f_{u}(\tau,u(\tau,x_{0}+\tilde{\eta};s)))d\tau}&\leq e^{T\sup_{(t,u)\in\R\times[0,1]}|1-f_{u}(t,u)|},\quad r\in[t_{\rm first}(x_{0};s),t_{\rm last}(x_{0};s)]\\
e^{-\int_{r}^{t}(1-f_{u}(\tau,u(\tau,x_{0}+\tilde{\eta};s)))d\tau}&\leq e^{-(t-r)},\quad r\in[t_{\rm last}(x_{0};s),t].
\end{split}
\end{equation}
They are simple consequences of \eqref{growth-period} and \eqref{space-derivative-of-f}. Set
\begin{equation*}
\begin{split}
&C_{0}:=\sup_{(t,u)\in\R\times[0,1]}|1-f_{u}(t,u)|,\quad C_{1}:=\sup_{x\neq y}\bigg|\frac{\phi_{\min}'(x)-\phi_{\min}'(y)}{x-y}\bigg|, \quad C_{2}:=\sup_{x\in\R}|\phi_{\min}'(x)|\\
&C_{3}:=\sup_{(t,u)\in\R\times[0,1]}|f_{uu}(t,u)|\times\sup_{x\in\R\atop s<0,t\geq s}|u_{x}(t,x;s)|,\quad C_{4}=\sup_{x\neq y\atop s<0,t\geq s}\bigg|\frac{u(t,x;s)-u(t,y;s)}{x-y}\bigg|.
\end{split}
\end{equation*}
Note that all these constants are finite. In fact, $C_{0}<\infty$ by $\rm(H2)$, $C_{1}<\infty$ by \eqref{Lip-TW}, $C_{3}<\infty$ by $\rm(H3)$ and Theorem \ref{thm-regularity-approx}$\rm(i)$, and $C_{4}<\infty$ by Proposition \ref{prop-property-approximating-sol}$\rm(1)(iii)$.

We are ready to bound $\rm(I)$-$\rm(IV)$. For the term $\rm (I)$, using \eqref{Lip-TW} and \eqref{some-estimate}, we see that
\begin{equation}\label{term-I}
\begin{split}
|{\rm (I)}|&\leq C_{1}e^{-\int_{s}^{t}(1-f_{u}(\tau,u(\tau,x_{0}+\eta;s)))d\tau}\\
&=C_{1}e^{-\big[\int_{s}^{t_{\rm first}(x_{0};s)}+\int_{t_{\rm first}(x_{0};s)}^{t_{\rm last}(x_{0};s)}+\int_{t_{\rm last}(x_{0};s)}^{t}\big](1-f_{u}(\tau,u(\tau,x_{0}+\eta;s)))d\tau}\\
&\leq C_{1}e^{-(t_{\rm first}(x_{0};s)-s)}e^{C_{0}T}e^{-(t-t_{\rm last}(x_{0};s))}\leq C_{1}e^{C_{0}T}.
\end{split}
\end{equation}

For the term $\rm (II)$, we have from Taylor expansion of the function $\eta\mapsto e^{-\int_{s}^{t}(1-f_{u}(\tau,u(\tau,x_{0}+\eta;s)))d\tau}$ at $\eta=0$ that
\begin{equation*}
\begin{split}
|{\rm (II)}|&\leq C_{2}\bigg|\frac{e^{-\int_{s}^{t}(1-f_{u}(\tau,u(\tau,x_{0}+\eta;s)))d\tau}-e^{-\int_{s}^{t}(1-f_{u}(\tau,u(\tau,x_{0};s)))d\tau}}{\eta}\bigg|\\
&\leq C_{2}e^{-\int_{s}^{t}(1-f_{u}(\tau,u(\tau,x_{0}+\eta_{*};s)))d\tau}\int_{s}^{t}\Big|f_{uu}(\tau,u(\tau,x_{0}+\eta_{*};s))u_{x}(\tau,x_{0}+\eta_{*};s)\Big|d\tau,
\end{split}
\end{equation*}
where $\eta_{*}$ is between $0$ and $\eta$, and hence, $|\eta_{*}|\leq \de$. We see
\begin{equation*}
\int_{s}^{t}\Big|f_{uu}(\tau,u(\tau,x_{0}+\eta_{*};s))u_{x}(\tau,x_{0}+\eta_{*};s)\Big|d\tau\leq C_{3}(t-s).
\end{equation*}
It then follows from \eqref{some-estimate} that
\begin{equation}\label{term-II}
\begin{split}
|{\rm (II)}|&\leq C_{2}C_{3}e^{-(t_{\rm first}(x_{0};s)-s)}e^{-(t-t_{\rm last}(x_{0};s))}(t-s)\\
&=C_{2}C_{3} e^{-(t_{\rm first}(x_{0};s)-s)}e^{-(t-t_{\rm last}(x_{0};s))}\\
&\quad\times\big[(t-t_{\rm last}(x_{0};s))+(t_{\rm last}(x_{0};s)-t_{\rm first}(x_{0};s))+(t_{\rm first}(x_{0};s)-s)\big]\\
&\leq C_{2}C_{3}\big[e^{-(t-t_{\rm last}(x_{0};s))}(t-t_{\rm last}(x_{0};s))+T+ e^{-(t_{\rm first}(x_{0};s)-s)}(t_{\rm first}(x_{0};s)-s)\big]\\
&\leq C_{2}C_{3}\bigg(\frac{2}{e}+T\bigg).
\end{split}
\end{equation}

For the term $\rm(III)$, we first see that
\begin{equation*}
\bigg|\frac{b(r,x_{0}+\eta;s)-b(r,x_{0};s)}{\eta}\bigg|=\bigg|\int_{\R}J'(y)\frac{u(r,x_{0}+\eta-y;s)-u(r,x_{0}-y;s)}{\eta}dy\bigg|\leq C_{4}\|J'\|_{L^{1}(\R)}.
\end{equation*}
Thus,
\begin{equation*}
\begin{split}
|{\rm \rm(III)}|&\leq C_{4}\|J'\|_{L^{1}(\R)}\int_{s}^{t}e^{-\int_{r}^{t}(1-f_{u}(\tau,u(\tau,x_{0};s)))d\tau}dr\\
&=C_{4}\|J'\|_{L^{1}(\R)}\bigg[\underbrace{\int_{s}^{t_{\rm first}(x_{0};s)}e^{-\int_{r}^{t}(1-f_{u}(\tau,u(\tau,x_{0};s)))d\tau}dr}_{\rm(III\mbox{-}1)}+\underbrace{\int_{t_{\rm first}(x_{0};s)}^{t_{\rm last}(x_{0};s)}e^{-\int_{r}^{t}(1-f_{u}(\tau,u(\tau,x_{0};s)))d\tau}dr}_{\rm(III\mbox{-}2)}\\
&\quad\quad\quad\quad\quad\quad\quad+\underbrace{\int_{t_{\rm last}(x_{0};s)}^{t}e^{-\int_{r}^{t}(1-f_{u}(\tau,u(\tau,x_{0};s)))d\tau}dr}_{\rm(III\mbox{-}3)}\bigg].
\end{split}
\end{equation*}
We estimate $\rm(III\mbox{-}1)$, $\rm(III\mbox{-}2)$ and $\rm(III\mbox{-}3)$. For $\rm(III\mbox{-}1)$, we obtain from \eqref{some-estimate} that
\begin{equation*}
\begin{split}
{\rm(III\mbox{-}1)}&=\int_{s}^{t_{\rm first}(x_{0};s)}e^{-\big[\int_{r}^{t_{\rm first}(x_{0};s)}+\int_{t_{\rm first}(x_{0};s)}^{t_{\rm last}(x_{0};s)}+\int_{t_{\rm last}(x_{0};s)}^{t}\big](1-f_{u}(\tau,u(\tau,x_{0};s)))d\tau}dr\\
&\leq e^{C_{0}T}\int_{s}^{t_{\rm first}(x_{0};s)}e^{-(t_{\rm first}(x_{0};s)-r)}e^{-(t-t_{\rm last}(x_{0};s))}dr\\
&=e^{C_{0}T}e^{-(t-t_{\rm last}(x_{0};s))}(1-e^{-(t_{\rm first}(x_{0};s)-s)})\leq e^{C_{0}T}.
\end{split}
\end{equation*}
Similarly,
\begin{equation*}
\begin{split}
{\rm(III\mbox{-}2)}&=\int_{t_{\rm first}(x_{0};s)}^{t_{\rm last}(x_{0};s)}e^{-\big[\int_{r}^{t_{\rm last}(x_{0};s)}+\int_{t_{\rm last}(x_{0};s)}^{t}\big](1-f_{u}(\tau,u(\tau,x_{0};s)))d\tau}dr\\
&\leq e^{C_{0}T}\int_{t_{\rm first}(x_{0};s)}^{t_{\rm last}(x_{0};s)}e^{-(t-t_{\rm last}(x_{0};s))}dr\leq e^{C_{0}T}Te^{-(t-t_{\rm last}(x_{0};s))}\leq Te^{C_{0}T}
\end{split}
\end{equation*}
and ${\rm(III\mbox{-}3)}\leq\int_{t_{\rm last}(x_{0};s)}^{t}e^{-(t-r)}dr=1-e^{-(t-t_{\rm last}(x_{0};s))}\leq1$. Hence,
\begin{equation}\label{term-III}
{\rm(III)}\leq C_{4}\|J'\|_{L^{1}(\R)}(e^{C_{0}T}+Te^{C_{0}T}+1).
\end{equation}

For the term $\rm(IV)$, using $|b(r,x_{0};s)|\leq\|J'\|_{L^{1}(\R)}$ and Taylor expansion as in the treatment of the term $\rm(II)$, we have
\begin{equation*}
\begin{split}
|{\rm (IV)}|&\leq\|J'\|_{L^{1}(\R)}\int_{s}^{t}e^{-\int_{r}^{t}(1-f_{u}(\tau,u(\tau,x_{0}+\eta_{*};s)))d\tau}\bigg(\int_{r}^{t}\Big|f_{uu}(\tau,u(\tau,x_{0}+\eta_{*};s))u_{x}(\tau,x_{0}+\eta_{*};s)\Big|d\tau\bigg)dr\\
&\leq C_{3}\|J'\|_{L^{1}(\R)}\int_{s}^{t}(t-r)e^{-\int_{r}^{t}(1-f_{u}(\tau,u(\tau,x_{0}+\eta_{*};s)))d\tau}dr\\
&=C_{3}\|J'\|_{L^{1}(\R)}\bigg[\underbrace{\int_{s}^{t_{\rm first}(x_{0};s)}(t-r)e^{-\int_{r}^{t}(1-f_{u}(\tau,u(\tau,x_{0}+\eta_{*};s)))d\tau}dr}_{\rm(IV\mbox{-}1)}\\
&\quad\quad\quad\quad\quad\quad\quad+\underbrace{\int_{t_{\rm first}(x_{0};s)}^{t_{\rm last}(x_{0};s)}(t-r)e^{-\int_{r}^{t}(1-f_{u}(\tau,u(\tau,x_{0}+\eta_{*};s)))d\tau}dr}_{\rm(IV\mbox{-}2)}\\
&\quad\quad\quad\quad\quad\quad\quad+\underbrace{\int_{t_{\rm last}(x_{0};s)}^{t}(t-r)e^{-\int_{r}^{t}(1-f_{u}(\tau,u(\tau,x_{0}+\eta_{*};s)))d\tau}dr}_{\rm(IV\mbox{-}3)}\bigg],
\end{split}
\end{equation*}
where $|\eta_{*}|\leq|\eta|\leq\de$. Similar to $\rm(III\mbox{-}1)$, $\rm(III\mbox{-}2)$ and $\rm(III\mbox{-}3)$, we have
\begin{equation*}
\begin{split}
{\rm(IV\mbox{-}1)}&=\int_{s}^{t_{\rm first}(x_{0};s)}(t-r)e^{-\big[\int_{r}^{t_{\rm first}(x_{0};s)}+\int_{t_{\rm first}(x_{0};s)}^{t_{\rm last}(x_{0};s)}+\int_{t_{\rm last}(x_{0};s)}^{t}\big](1-f_{u}(\tau,u(\tau,x_{0}+\eta_{*};s)))d\tau}dr\\
&\leq e^{C_{0}T}\int_{s}^{t_{\rm first}(x_{0};s)}\big[(t-t_{\rm last}(x_{0};s))+T+(t_{\rm first}(x_{0};s)-r)\big]e^{-(t_{\rm first}(x_{0};s)-r)}e^{-(t-t_{\rm last}(x_{0};s))}dr\\
&\leq e^{C_{0}T}\bigg[(t-t_{\rm last}(x_{0};s))e^{-(t-t_{\rm last}(x_{0};s))}\int_{s}^{t_{\rm first}(x_{0};s)}e^{-(t_{\rm first}(x_{0};s)-r)}dr\\
&\quad\quad\quad+T\int_{s}^{t_{\rm first}(x_{0};s)}e^{-(t_{\rm first}(x_{0};s)-r)}dr+\int_{s}^{t_{\rm first}(x_{0};s)}(t_{\rm first}(x_{0};s)-r))e^{-(t_{\rm first}(x_{0};s)-r)}dr\bigg]\\
&\leq e^{C_{0}T}\bigg[\frac{1-e^{-(t_{\rm first}(x_{0};s)-s)}}{e}+T(1-e^{-(t_{\rm first}(x_{0};s)-s)})+\Big(1-(1+t_{\rm first}(x_{0};s)-s)e^{-(t_{\rm first}(x_{0};s)-s)}\Big)\bigg]\\
&\leq e^{C_{0}T}\bigg(\frac{1}{e}+T+1\bigg),
\end{split}
\end{equation*}
\begin{equation*}
\begin{split}
{\rm(IV\mbox{-}2)}&=\int_{t_{\rm first}(x_{0};s)}^{t_{\rm last}(x_{0};s)}(t-r)e^{-\big[\int_{r}^{t_{\rm last}(x_{0};s)}+\int_{t_{\rm last}(x_{0};s)}^{t}\big](1-f_{u}(\tau,u(\tau,x_{0}+\eta_{*};s)))d\tau}dr\\
&\leq e^{C_{0}T}\int_{t_{\rm first}(x_{0};s)}^{t_{\rm last}(x_{0};s)}[(t-t_{\rm last}(x_{0};s))+(t_{\rm last}(x_{0};s)-r)]e^{-(t-t_{\rm last}(x_{0};s))}dr\\
&\leq e^{C_{0}T}\bigg[T(t-t_{\rm last}(x_{0};s))e^{-(t-t_{\rm last}(x_{0};s))}+\int_{t_{\rm first}(x_{0};s)}^{t_{\rm last}(x_{0};s)}(t_{\rm last}(x_{0};s)-r)dr\bigg]\\
&\leq e^{C_{0}T}\bigg(\frac{T}{e}+\frac{T^{2}}{2}\bigg)
\end{split}
\end{equation*}
and
\begin{equation*}
\begin{split}
{\rm(IV\mbox{-}3)}&\leq\int_{t_{\rm last}(x_{0};s)}^{t}(t-r)e^{-(t-r)}dr=1-(1+t-t_{\rm last}(x_{0};s))e^{-(t-t_{\rm last}(x_{0};s))}\leq1.
\end{split}
\end{equation*}
Hence,
\begin{equation}\label{term-IV}
|{\rm(IV)}|\leq C_{3}\|J'\|_{L^{1}(\R)}\bigg[e^{C_{0}T}\bigg(\frac{1}{e}+T+1\bigg)+e^{C_{0}T}\bigg(\frac{T}{e}+\frac{T^{2}}{2}\bigg)+1\bigg].
\end{equation}

Consequently, \eqref{local-uniform-Lip-derivative-de} follows from \eqref{term-I}, \eqref{term-II}, \eqref{term-III} and \eqref{term-IV}.
\end{proof}


\section{Uniform steepness}\label{sec-steepness}

In this section, we study the steepness of transition fronts and prove Theorem \ref{thm-steepness}. Throughout this section,
we  assume $\rm(H1)$-$\rm(H3)$. Theorem \ref{thm-steepness} will be a simple result of the following theorem.

\begin{thm}\label{thm-uniform-steepness-approx}
For any $M>0$, there exists $\al_{M}>0$ such that
\begin{equation*}
\sup_{x\in[X(t;s)-M,X(t;s)+M]}u_{x}(t,x;s)\leq-\al_{M}
\end{equation*}
for all $s<0$, $t\geq s$.
\end{thm}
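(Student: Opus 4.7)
The plan is to argue by contradiction and compactness. Suppose the conclusion fails for some $M>0$: there exist sequences $s_n<0$, $t_n\ge s_n$, and $x_n\in[X(t_n;s_n)-M,X(t_n;s_n)+M]$ with $u_x(t_n,x_n;s_n)\to 0^-$. I will shift each solution so that its front sits at the origin, extract a limit ``transition front'' $U$ by compactness, and then apply a nonlocal strong maximum principle to the non-negative function $W:=-U_x$ to force $U$ to be spatially constant, contradicting the front-like asymptotics inherited from Proposition~\ref{prop-property-approximating-sol}$\rm(2)(iii)$.

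For the compactness step, set $\tilde u_n(t,x)=u(t+t_n,x+X(t_n;s_n);s_n)$, $Y_n(t)=X(t+t_n;s_n)-X(t_n;s_n)$, and $\xi_n=x_n-X(t_n;s_n)\in[-M,M]$. By Theorem~\ref{thm-regularity-approx}, the families $\{\tilde u_n\}$ and $\{\partial_x\tilde u_n\}$ are uniformly bounded and uniformly Lipschitz in space, with time-equicontinuity following from the equation; by Proposition~\ref{prop-property-approximating-sol}$\rm(1)(ii)$, $\{Y_n\}$ is uniformly Lipschitz with $Y_n(0)=0$. Arzel\`a-Ascoli and a diagonal extraction, combined with a subsequential limit of the time-shifted nonlinearities $f(\cdot+t_n,\cdot)\to g$ (which inherits the structural bounds of (H2)-(H3)), produces a limit $U$, $U_x$, $Y$, and $x^*\in[-M,M]$ such that $U$ is an entire solution of $U_t=J\ast U-U+g(t,U)$, $U$ is non-increasing in $x$, $U_x(0,x^*)=0$, and $U$ inherits the uniform exponential tail estimates of Proposition~\ref{prop-property-approximating-sol}$\rm(2)(iii)$ around $Y(t)$. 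In particular, $U(t,-\infty)=1$ and $U(t,+\infty)=0$ uniformly in $t$, so $U(0,\cdot)$ is not constant.

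For the rigidity step, $W=-U_x\ge 0$ satisfies the linear nonlocal equation $W_t=J\ast W-W+g_u(t,U)W$ (using (H3) to differentiate in $x$), with bounded coefficient. Because $W\ge 0$ on $\R\times\R$ and $W(0,x^*)=0$, the map $t\mapsto W(t,x^*)$ is minimized at $t=0$, so $W_t(0,x^*)=0$, which reduces the equation at $(0,x^*)$ to $(J\ast W)(0,x^*)=\int_\R J(x^*-y)W(0,y)\,dy=0$. Nonnegativity of both factors together with continuity of $W(0,\cdot)$ then forces $W(0,y)=0$ for every $y\in x^*+\supp J$; iterating the argument at any such $y$ propagates the zero set of $W(0,\cdot)$ along the Minkowski semigroup generated by $\supp J$. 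Since $\supp J$ is symmetric about $0$ (by evenness of $J$) and, by continuity and $J\not\equiv 0$, contains a nondegenerate interval, this semigroup fills $\R$; hence $W(0,\cdot)\equiv 0$, i.e.\ $U(0,\cdot)$ is constant in $x$, contradicting $U(0,-\infty)=1$ and $U(0,+\infty)=0$ from the previous step.

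The hardest part is the rigidity argument, specifically propagating zeros of $W(0,\cdot)$ from $x^*+\supp J$ to all of $\R$: the iteration is transparent when $J(0)>0$ (so $\supp J$ contains an open interval around the origin and iterated Minkowski sums cover $\R$ immediately), whereas the general case requires a short subgroup/density argument exploiting symmetry of $\supp J$. A secondary, essentially bookkeeping, point is ensuring that the subsequential limit nonlinearity $g$ retains enough structure of $f$ for the equation satisfied by $W$ to have a bounded coefficient; this follows directly from (H2)-(H3) together with Arzel\`a-Ascoli applied to $f_u(\cdot+t_n,\cdot)$ on $\R\times[0,1]$.
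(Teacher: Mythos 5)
Your strategy—contradiction, compactness, and a nonlocal strong maximum principle for $W=-U_x$—is genuinely different from the paper's. The paper proceeds constructively: Lemma~\ref{lem-tech-1234567} gives a quantitative Harnack-type inequality $u_x(t,x;s)\le C(t-t_0,|x-z|,h)\int_{z-h}^{z+h}u_x(t_0,y;s)\,dy$ by iterating the convolution structure and using $J^N$ (the $N$-fold convolution of $J$ with itself), and then Theorem~\ref{thm-uniform-steepness-approx} follows by choosing $z=X(t_0;s)$ and $h$ related to the level sets $X_{\theta/2},X_{(1+\theta)/2}$. Your compactness-plus-rigidity route trades explicit constants for a soft argument; the rigidity step (propagating the zero set of $W(0,\cdot)$ along iterated Minkowski sums of $\supp J$, which fill $\R$ since $\supp J$ is symmetric and contains an interval) is correct and is morally the same mechanism as the paper's use of $J^N$ becoming positive on any bounded interval for large $N$.

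However, there is a real gap. Your argument silently assumes that the blown-up limit $U$ is an \emph{entire} solution, i.e.\ that $t_n-s_n\to\infty$ along the contradiction sequence; you write ``$W\ge0$ on $\R\times\R$'' and ``the map $t\mapsto W(t,x^*)$ is minimized at $t=0$''. If instead $t_n-s_n$ stays bounded (or tends to $0$), the limit lives only on a half-line $[-T^*,\infty)\times\R$ with initial datum $\phi_{\min}(\cdot-y^*)$, and when $T^*=0$ the putative limit $W(0,x^*)$ equals $-\phi_{\min}'(x^*-y^*)>0$, so the construction of a zero of $W$ never happens; the contradiction must instead be extracted directly from the formula for $u_x$ at small $t-s$. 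This is precisely why the paper's proof splits into the two regimes $t-s\ge1$ and $0\le t-s\le1$ and uses the explicit representation \eqref{formula-for-derivative-1}, the relation $X(s;s)=y_s+C_2$, and the everywhere-negativity of $\phi_{\min}'$ to handle the short-time case. Your proposal needs to add an analogous case analysis before the compactness extraction.

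A secondary, fixable point: your extraction of a locally uniform limit $g$ with $g_u$ well-defined by applying Arzel\`a--Ascoli to $f_u(\cdot+t_n,\cdot)$ is not justified by the stated hypotheses, since (H2)--(H3) bound $f_t$, $f_u$, $f_{uu}$ but not $f_{tu}$, so $\{f_u(\cdot+t_n,\cdot)\}$ need not be equicontinuous in $t$. This is easily circumvented: at the minimum point the term $g_u(0,U(0,x^*))W(0,x^*)$ vanishes because $W(0,x^*)=0$, so one can work with the differential inequality $W_t\ge J\ast W-(1+C_0)W$ with $C_0=\sup|f_u|$ and never needs a pointwise limit of the coefficient. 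But as written, the claim is not supported.
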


Assuming Theorem \ref{thm-uniform-steepness-approx}, we prove Theorem \ref{thm-steepness}.

\begin{proof}[Proof of Theorem \ref{thm-steepness}]
It follows from Proposition \ref{prop-property-approximating-sol}$\rm(2)(i)$, \eqref{local-unifrom-convergence-derivative} and Theorem \ref{thm-uniform-steepness-approx}.
\end{proof}

To finish the proof of Theorem \ref{thm-steepness}, we prove Theorem \ref{thm-uniform-steepness-approx}, which is based on the following Lemma, whose proof is inspired by the proof of \cite[Theorem 5.1]{Ch97} and \cite[Lemma 3.2]{Sh99-1}.

\begin{lem}\label{lem-tech-1234567}
For any $t>t_{0}\geq s$, $h>0$ and $z\in\R$, there holds
\begin{equation*}
u_{x}(t,x;s)\leq C\int_{z-h}^{z+h}u_{x}(t_{0},y;s)dy,\quad \forall x\in\R,
\end{equation*}
where $C=C(t-t_{0},|x-z|,h)>0$ satisfies
\begin{itemize}
\item[\rm(i)] $C\to0$ polynomially as $t-t_{0}\to0$ and $C\to0$ exponentially as $t-t_{0}\to\infty$;
\item[\rm(ii)] $C:(0,\infty)\times[0,\infty)\times(0,\infty)\to(0,\infty)$ is locally uniformly positive in the sense that for any $0<t_{1}<t_{2}<\infty$, $M_{1}>0$ and $h_{1}>0$, there holds
\begin{equation*}
\inf_{t\in[t_{1},t_{2}], M\in[0,M_{1}], h\in(0,h_{1}]} C(t,M,h)>0.
\end{equation*}
\end{itemize}
\end{lem}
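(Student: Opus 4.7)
The plan is to rephrase the inequality in terms of $w(t,x;s) := -u_{x}(t,x;s)$, which is continuous in $x$ by Theorem~\ref{thm-regularity-approx} and non-negative by the strict spatial monotonicity of $u(\cdot,\cdot;s)$ from Proposition~\ref{prop-property-approximating-sol}(1)(i). Differentiating \eqref{main-eqn} in $x$ (justified by Theorem~\ref{thm-regularity-approx}) shows that
\[
w_{t} = J \ast w - w + f_{u}(t, u(t,x;s))\, w,
\]
and, setting $b := \sup_{\R \times [0,1]} |f_{u}| < \infty$ (finite by $\rm(H2)$) and using $w \geq 0$, I obtain the differential inequality $w_{t} \geq J \ast w - (1+b)\, w$.

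Next, I compare $w$ with the solution $\tilde w$ of the linear equation $\tilde w_{t} = J \ast \tilde w - (1+b)\tilde w$ with truncated initial data $\tilde w(t_{0}, \cdot) = w(t_{0}, \cdot; s)\, \mathbf{1}_{[z-h,\, z+h]}$. Since $\tilde w(t_{0},\cdot) \leq w(t_{0},\cdot;s)$ everywhere, the nonlocal comparison principle (Appendix~\ref{sec-app-cp}) gives $w(t,x;s) \geq \tilde w(t,x)$ for $t \geq t_{0}$. Expanding the semigroup of $J \ast - (1+b) I$ as a power series in $J \ast$ yields
\[
\tilde w(t,x) = e^{-(1+b)(t-t_{0})} \sum_{n=0}^{\infty} \frac{(t-t_{0})^{n}}{n!} \int_{z-h}^{z+h} J^{\ast n}(x-y)\, w(t_{0}, y; s)\, dy,
\]
with the convention $J^{\ast 0} = \delta_{0}$. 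All terms are non-negative, so for any $n_{0} \geq 1$ one obtains the single-term lower bound
\[
w(t,x;s) \geq e^{-(1+b)(t-t_{0})}\, \frac{(t-t_{0})^{n_{0}}}{n_{0}!} \int_{z-h}^{z+h} J^{\ast n_{0}}(x-y)\, w(t_{0}, y; s)\, dy.
\]

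The final step is to pick $n_{0}$ in terms of $M := |x-z|$ and $h$ so that $J^{\ast n_{0}}$ is uniformly positive on $[-(M+h),\, M+h]$, which contains $\{x - y : y \in [z-h,\, z+h]\}$. Since $\rm(H1)$ gives $J$ continuous, symmetric, non-negative and non-trivial, either $J > 0$ on a neighborhood of the origin, or $J > 0$ on a symmetric pair of intervals away from the origin (in which case $J \ast J$ is continuous and strictly positive near $0$); iterating convolutions then shows that the continuous functions $J^{\ast n}$ are eventually strictly positive on arbitrarily large symmetric intervals around the origin. Thus for each $L > 0$ there exist $n_{0} = n_{0}(L) \in \N$ and $c_{0} = c_{0}(L) > 0$ with $J^{\ast n_{0}} \geq c_{0}$ on $[-L, L]$. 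Taking $L = M + h$ and converting back via $u_{x} = -w$ gives the target inequality with
\[
C(t-t_{0}, M, h) = c_{0}(M+h)\, e^{-(1+b)(t-t_{0})}\, \frac{(t-t_{0})^{n_{0}(M+h)}}{n_{0}(M+h)!}.
\]
Property (i) is then immediate: the factor $(t-t_{0})^{n_{0}}$ gives polynomial decay as $t-t_{0} \to 0$, while $e^{-(1+b)(t-t_{0})}$ forces exponential decay as $t-t_{0} \to \infty$. Property (ii) follows because on each compact set $[t_{1}, t_{2}] \times [0, M_{1}] \times (0, h_{1}]$ the choices $n_{0} = n_{0}(M_{1} + h_{1})$ and $c_{0} = c_{0}(M_{1}+h_{1})$ work uniformly, and the remaining factors are continuous and strictly positive.

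The main technical hurdle is the pointwise positivity analysis of $J^{\ast n_{0}}$ on a prescribed interval, which requires careful bookkeeping of how the support and strict positivity of the convolution iterates of $J$ propagate as $n$ grows; the remaining ingredients — the equation for $w$, the nonlocal comparison principle, and the semigroup expansion — are routine.
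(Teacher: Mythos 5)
Your proof is correct in substance and follows essentially the same route as the paper's: in both cases one passes to a linear differential inequality for the derivative (or its difference quotient), iterates the convolution operator to produce $J^{\ast n_{0}}$, and exploits the fact that iterated convolutions of $J$ become strictly positive on any fixed bounded interval. The paper works with the scaled difference quotient $\tilde v = e^{(1+K)(t-t_{0})}\big(u(t,\cdot+\epsilon;s)-u(t,\cdot;s)\big)$, derives $\tilde v_{t}\leq J\ast\tilde v$, iterates the integral inequality in steps of length $T=(t-t_{0})/N$ to get $\tilde v(t,\cdot)\leq T^{N}J^{N}\ast\tilde v(t_{0},\cdot)$, and only divides by $\epsilon$ at the very end; you instead work directly with $w=-u_{x}$ (legitimate because Theorem~\ref{thm-regularity-approx} is already available) and extract a single term of the semigroup power series, which is a cleaner packaging of the same computation (yielding $(t-t_{0})^{n_{0}}/n_{0}!$ in place of $((t-t_{0})/N)^{N}$). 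The positivity analysis of $J^{\ast n_{0}}$, the choice of $n_{0}$ uniformly on compacts, and the verification of properties (i)--(ii) mirror the paper's remarks.

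One small wrinkle to flag: you compare $w$ against the solution $\tilde w$ of the linear equation with truncated initial data $w(t_{0},\cdot;s)\mathbf{1}_{[z-h,z+h]}$, which is discontinuous. Proposition~\ref{prop-app-comparison} requires continuity of the functions being compared (and the $n=0$ term of your series is exactly $e^{-(1+b)(t-t_{0})}$ times that discontinuous indicator, so $\tilde w$ inherits the discontinuity). The truncation is in fact unnecessary: compare $w$ with the solution $\bar w$ of the linear equation started from the continuous data $w(t_{0},\cdot;s)$, which lies in $L^{1}\cap L^{\infty}$ because $\int_{\R}w(t_{0},y;s)\,dy = u(t_{0},-\infty;s)-u(t_{0},\infty;s)=1$; all the terms of the resulting power series are nonnegative, so you may drop every term except $n=n_{0}$ and then shrink the domain of integration from $\R$ to $[z-h,z+h]$, arriving at exactly the same lower bound without ever introducing a discontinuous function. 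With that trivial repair the argument is complete.
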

\begin{proof}
Let $\ep>0$. Let $v_{1}(t,x;s)=u(t,x+\ep;s)$ and $v_{2}(t,x;s)=u(t,x;s)$. We see that $v(t,x;s):=v_{1}(t,x;s)-v_{2}(t,x;s)<0$ by monotonicity and satisfies
\begin{equation*}
v_{t}=J\ast v-v+f(t,v_{1})-f(t,v_{2}).
\end{equation*}
By $\rm(H2)$, we can find $K>0$ such that $f(t,v_{1})-f(t,v_{2})\leq -K(v_{1}-v_{2})$, which implies that
\begin{equation*}
v_{t}\leq J\ast v-v-Kv.
\end{equation*}
Setting $\tilde{v}(t,x;s)=e^{(1+K)(t-t_{0})}v(t,x;s)$, we see
\begin{equation}\label{an-differential-inequality-1}
\tilde{v}_{t}\leq J\ast\tilde{v}.
\end{equation}
Since $v<0$, we have $\tilde{v}<0$, which implies $J\ast\tilde{v}<0$ by the nonnegativity of $J$ by $\rm(H1)$, and therefore, $\tilde{v}_{t}<0$ by \eqref{an-differential-inequality-1}. In particular, $\tilde{v}(t,x;s)<\tilde{v}(t_{0},x;s)$. It then follows from the nonnegativity of $J$ and \eqref{an-differential-inequality-1} that
\begin{equation}\label{an-differential-inequality-2}
\tilde{v}_{t}(t,x;s)\leq [J\ast\tilde{v}(t,\cdot;s)](x)\leq[J\ast\tilde{v}(t_{0},\cdot;s)](x).
\end{equation}
For each $x\in\R$, \eqref{an-differential-inequality-2} is an ordinary differential inequality. Integrating \eqref{an-differential-inequality-1} over $[t_{0},t]$ with respect to the time variable, we find from $\tilde{v}(t_{0},x;s)<0$ that
\begin{equation*}
\tilde{v}(t,x;s)\leq (t-t_{0})[J\ast\tilde{v}(t_{0},\cdot;s)](x)+\tilde{v}(t_{0},x;s)<(t-t_{0})[J\ast\tilde{v}(t_{0},\cdot;s)](x).
\end{equation*}
In particular, for any $T>0$, we have
\begin{equation}\label{initial-estimate}
\tilde{v}(t_{0}+T,x;s)<T[J\ast\tilde{v}(t_{0},\cdot;s)](x).
\end{equation}

Then, considering \eqref{an-differential-inequality-1} with initial time at $t_{0}+T$ and repeating the above arguments, we find
\begin{equation*}
\tilde{v}(t_{0}+T+T,x;s)<T[J\ast\tilde{v}(t_{0}+T,\cdot;s)](x)<T^{2}[J\ast J\ast\tilde{v}(t_{0},\cdot;s)](x),
\end{equation*}
where we used \eqref{initial-estimate} in the second inequality. Repeating this, we conclude that for any $T>0$ and any $N=1,2,3,\dots$, there holds
\begin{equation}\label{repeating-result}
\tilde{v}(t_{0}+NT,x;s)<T^{N}[J^{N}\ast\tilde{v}(t_{0},\cdot;s)](x),
\end{equation}
where $J^{N}=\underbrace{J\ast J\ast\cdots\ast J}_{N\,\,\text{times}}$. Note that $J^{N}$ is nonnegative, and if $J$ is compactly supported, then $J^{N}$ is not everywhere positive no matter how large $N$ is. But, since $J$ is nonnegative and positive on some open interval, $J^{N}$ can be positive on any fixed bounded interval if $N$ is large. Moreover, since $J$ is symmetric, so is $J^{N}$.

Now, let $x\in\R$, $z\in\R$ and $h>0$, and let $N:=N(|x-z|,h)$ be large enough so that
\begin{equation*}
\tilde{C}=\tilde{C}(|x-z|,h):=\inf_{y\in[x-z-h,x-z+h]}J^{N}(y)>0.
\end{equation*}
Note that the dependence of $N$ on $x-z$ through $|x-z|$ is due to the symmetry of $J^{N}$. Moreover, the positivity of $\tilde{C}:[0,\infty)\times(0,\infty)\to(0,\infty)$ is uniform on compacts sets, which is because  $N$ can be chosen to be nondecreasing in $|x-z|$ and in $h$.

Then, for $t>t_{0}$, we see from \eqref{repeating-result} with $T=\frac{t-t_{0}}{N}$ that
\begin{equation*}
\begin{split}
\tilde{v}(t,x;s)&<\bigg(\frac{t-t_{0}}{N}\bigg)^{N}\int_{\R}J^{N}(x-y)\tilde{v}(t_{0},y;s)dy\\
&\leq\bigg(\frac{t-t_{0}}{N}\bigg)^{N}\int_{z-h}^{z+h}J^{N}(x-y)\tilde{v}(t_{0},y;s)dy\\
&\leq\tilde{C}\bigg(\frac{t-t_{0}}{N}\bigg)^{N}\int_{z-h}^{z+h}\tilde{v}(t_{0},y;s)dy,
\end{split}
\end{equation*}
since $x-y\in[x-z-h,x-z+h]$ when $y\in[z-h,z+h]$. Going back to $u(t,x;s)$, we find
\begin{equation*}
u(t,x+\ep;s)-u(t,x;s)\leq\tilde{C}e^{-(1+K)(t-t_{0})}\bigg(\frac{t-t_{0}}{N}\bigg)^{N}\int_{z-h}^{z+h}[u(t_{0},y+\ep;s)-u(t_{0},y;s)]dy
\end{equation*}
Dividing the above estimate by $\ep$, we conclude the result from dominated convergence theorem with $C=\tilde{C}e^{-(1+K)(t-t_{0})}\big(\frac{t-t_{0}}{N}\big)^{N}$. From which, we obtain the properties of $C$ and finish the proof.
\end{proof}

Now, we prove Theorem \ref{thm-uniform-steepness-approx} Recall that $\phi_{\min}$ is as in \eqref{tw-homo}, and $c_{\min}$ and $c_{\max}$ are as in  Proposition \ref{prop-property-approximating-sol}$\rm(1)(ii)$.

\begin{proof}[Proof of Theorem \ref{thm-uniform-steepness-approx}]
Set
\begin{equation*}
h_{\theta}:=\max\bigg\{\sup_{s<0,t\geq s}|X(t;s)-X_{\frac{\theta}{2}}(t;s)|,\sup_{s<0,t\geq s}|X(t;s)-X_{\frac{1+\theta}{2}}(t;s)|\bigg\}.
\end{equation*}
By Proposition \ref{prop-property-approximating-sol}$\rm(1)(ii)$, $h_{\theta}<\infty$. Then,
\begin{equation}\label{simple-comparison-12345}
X(t_{0};s)+h_{\theta}\geq X_{\frac{\theta}{2}}(t_{0};s),\quad X(t_{0};s)-h_{\theta}\leq X_{\frac{1+\theta}{2}}(t_{0};s)
\end{equation}
for all $t_{0}\geq s$. Now, for any $\tau\geq0$ and $t_{0}\geq s$, we apply Lemma \ref{lem-tech-1234567} with $z=X(t_{0};s)$ and $h=h_{\theta}$ to see that if $|x-X(t_{0};s)|\leq M$, then
\begin{equation}\label{aprior-estimate-1234567}
\begin{split}
u_{x}(\tau+t_{0},x;s)&\leq C(\tau,M,h_{\theta})\int_{X(t_{0};s)-h_{\theta}}^{X(t_{0};s)+h_{\theta}}u_{x}(t_{0},y;s)dy\\
&= C(\tau,M,h_{\theta})[u(t_{0},X(t_{0};s)+h_{\theta};s)-u(t_{0},X(t_{0};s)-h_{\theta};s)]\\
&\leq C(\tau,M,h_{\theta})[u(t_{0},X_{\frac{\theta}{2}}(t_{0};s);s)-u(t_{0},X_{\frac{1+\theta}{2}}(t_{0};s);s)]\\
&=-\frac{C(\tau,M,h_{\theta})}{2},
\end{split}
\end{equation}
where we used \eqref{simple-comparison-12345} and the monotonicity in the second inequality. Notice $C(\tau,M,h_{\theta})\to0$ as $\tau\to0$.

To apply \eqref{aprior-estimate-1234567}, we see that if $|x-X(t_{0}+1;s)|\leq M$, then
\begin{equation*}
|x-X(t_{0};s)|\leq|x-X(t_{0}+1;s)|+|X(t_{0}+1;s)-X(t_{0};s)|\leq M+c_{\max},
\end{equation*}
where we used Proposition \ref{prop-property-approximating-sol}$\rm(1)(ii)$. We then apply \eqref{aprior-estimate-1234567} with $M$ replaced by $M+c_{\max}$ and $\tau$ replaced by $1$ to conclude that
\begin{equation*}
u_{x}(t_{0}+1,x;s)\leq-\frac{C(1,M+c_{\max},h_{\theta})}{2}.
\end{equation*}
Since $t_{0}\geq s$ is arbitrary, we have shown
\begin{equation*}
\sup_{s<0, t-s\geq 1}\sup_{x\in[X(t;s)-M,X(t;s)+M]}u_{x}(t,x;s)<0.
\end{equation*}

To finish the proof, we only need to show
\begin{equation}\label{finite-time-steepness}
\sup_{s<0, 0\leq t-s\leq 1}\sup_{x\in[X(t;s)-M,X(t;s)+M]}u_{x}(t,x;s)<0.
\end{equation}
To this end, we recall
\begin{equation}\label{formula-for-derivative-1}
u_{x}(t,x;s)=\phi'_{\min}(x-y_{s})e^{-\int_{s}^{t}(1-f_{u}(\tau,u(\tau,x;s)))d\tau}+\int_{s}^{t}b(r,x;s)e^{-\int_{r}^{t}(1-f_{u}(\tau,u(\tau,x;s)))d\tau}dr,
\end{equation}
where $b(t,x;s)=\int_{\R}J'(x-y)u(t,y;s)dy=\int_{\R}J(x-y)u_{x}(t,y;s)dy$. It is just the solution of the initial-value problem
\begin{equation*}
(u_{x})_{t}=J\ast u_{x}-u_{x}+f_{u}(t,u)u_{x},\quad u_{x}(s,x;s)=\phi_{\min}'(x-y_{s}).
\end{equation*}
Set $a:=\inf_{(t,x)\in\R\times[0,1]}f_{u}(t,u)<0$. Since $\phi_{\min}'<0$ and $b(t,x;s)<0$, \eqref{formula-for-derivative-1} implies
\begin{equation*}
u_{x}(t,x;s)\leq\phi'_{\min}(x-y_{s})e^{-(1-a)(t-s)}+\int_{s}^{t}b(r,x;s)e^{-(1-a)(t-r)}dr.
\end{equation*}
In particular,
\begin{equation}\label{formula-for-derivative-2}
u_{x}(t,x;s)\leq
\phi'_{\min}(x-y_{s})e^{-(1-a)}\quad\text{if}\quad0\leq t-s\leq 1
\end{equation}

For $0\leq t-s\leq 1$, we have from $\dot{X}(t;s)\in[c_{\min},c_{\max}]$ by Proposition \ref{prop-property-approximating-sol}$\rm(1)(ii)$ that
\begin{equation}\label{an-estimate-1234567}
X(t;s)-X(s;s)\in[c_{\min}(t-s),c_{\max}(t-s)]\subset[0,c_{\max}].
\end{equation}
Recall that for any $\la\in(0,1)$, $X_{\la}(t;s)$ is such that $u(t,X_{\la}(t;s);s)=\la$. In particular, $X_{\la}(s;s)$ is such that $\phi_{\min}(X_{\la}(s;s)-y_{s})=\la$. Thus, $X_{\la}(s;s)-y_{s}$ is independent of $s$. From the construction of $X(t;s)$ in Theorem \cite[Theorem 4.1]{ShSh14-2}, we know $X(s;s)=X_{\la_{*}}(s;s)+C_{1}$ for some $\la_{*}\in(\theta,1)$ and $C_{1}>0$. Hence, there exists $C_{2}\in\R$ such that $X(s;s)=y_{s}+C_{2}$ for all $s<0$, which, together with \eqref{an-estimate-1234567}, implies
\begin{equation*}
X(t;s)-y_{s}=X(t;s)-X(s;s)+X(s;s)-y_{s}\in [C_{2},C_{2}+c_{\max}].
\end{equation*}
Now, if $x\in[X(t;s)-M,X(t;s)+M]$, then
\begin{equation*}
x-y_{s}=x-X(t;s)+X(t;s)-y_{s}\in[-M,M]+[C_{2},C_{2}+c_{\max}]\subset [C_{2}-M,C_{2}+c_{\max}+M].
\end{equation*}
In particular, there exists $c_{M}>0$ such that
\begin{equation*}
\sup_{x\in[X(t;s)-M,X(t;s)+M]}\phi'_{\min}(x-y_{s})\leq\sup_{x\in[C_{2}-M,C_{2}+c_{\max}+M]}\phi_{\min}'(x)\leq-c_{M},
\end{equation*}
since $\phi_{\min}'$ is continuous and negative everywhere. It then follows from \eqref{formula-for-derivative-2} that
\begin{equation*}
\sup_{s<0,0\leq t-s\leq 1}\sup_{x\in[X(t;s)-M,X(t;s)+M]}u_{x}(t,x;s)\leq-c_{M}e^{-(1-a)}.
\end{equation*}
In particular, \eqref{finite-time-steepness} follows. This completes the proof.
\end{proof}

For $\la\in(0,1)$, let $X_{\la}(t)$ be such that $u(t,X_{\la}(t))=\la$ for all $t\in\R$. It is well-defined by the monotonicity of $u(t,x)$ in $x$. As a simple consequence of Theorem \ref{thm-steepness}, we have
\begin{cor}\label{cor-interface-at-const-value}
For any $\la\in(0,1)$, $X_{\la}:\R\to\R$ is continuously differentiable and satisfies $\sup_{t\in\R}|\dot{X}_{\la}(t)|<\infty$.
\end{cor}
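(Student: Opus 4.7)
The plan is to apply the classical implicit function theorem to the defining relation $u(t, X_{\la}(t)) = \la$. First I would verify that $X_\la$ is globally well-defined: for each $t \in \R$, existence of $X_\la(t)$ follows from $u(t,-\infty)=1$, $u(t,\infty)=0$ and continuity of $u(t,\cdot)$ (Theorem \ref{thm-regularity}), while uniqueness follows from strict monotonicity of $u$ in $x$ (Proposition \ref{prop-property-approximating-sol}$\rm(2)$). The uniform exponential bounds in Proposition \ref{prop-property-approximating-sol}$\rm(2)(iii)$ then give
$$M_\la := \sup_{t\in\R}|X_\la(t) - X(t)| < \infty.$$

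Next I would collect the two ingredients required to apply the implicit function theorem. Invoking Theorem \ref{thm-steepness} with $M = M_\la$ yields
$$\al_\la := -\sup_{t\in\R} u_x(t, X_\la(t)) > 0,$$
so $u_x$ is uniformly bounded away from zero along the level set $\{u = \la\}$. For the $t$-derivative, since $u$ is a classical solution of \eqref{main-eqn}, one has $u_t = J\ast u - u + f(t,u)$ pointwise; the bounds $0 \leq u \leq 1$, $\int_\R J = 1$, and $\rm(H2)$ yield a finite constant $K$ with $\sup_{(t,x)\in\R\times\R}|u_t(t,x)| \leq K$. Joint continuity of $u_x$ on $\R\times\R$ is inherited from the locally uniform convergence $u_x(\cdot,\cdot;s)\to u_x(\cdot,\cdot)$ recorded in \eqref{local-unifrom-convergence-derivative}, and joint continuity of $u_t$ is immediate from the explicit formula $u_t = J\ast u - u + f(t,u)$; hence $u \in C^1(\R\times\R)$.

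Finally, the implicit function theorem applied at each $t_0\in\R$ produces a $C^1$ function $\tilde X$ defined near $t_0$ with $u(t,\tilde X(t))=\la$; strict monotonicity of $u$ in $x$ forces $\tilde X \equiv X_\la$ on that neighborhood, so $X_\la \in C^1(\R)$ and
$$\dot X_\la(t) = -\frac{u_t(t, X_\la(t))}{u_x(t, X_\la(t))}.$$
Combining the two bounds above gives $\sup_{t\in\R}|\dot X_\la(t)| \leq K/\al_\la < \infty$, which is the desired conclusion. I do not anticipate a genuine obstacle here: everything is local application of the implicit function theorem, and the only slightly subtle point, joint continuity of $u_x$, is already packaged in \eqref{local-unifrom-convergence-derivative}.
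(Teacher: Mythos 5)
Your proposal is correct and follows essentially the same route as the paper: apply the implicit function theorem to $u(t,X_\la(t))=\la$, using Theorem~\ref{thm-steepness} together with $\sup_{t}|X_\la(t)-X(t)|<\infty$ to get a uniform lower bound $\al_\la$ on $|u_x(t,X_\la(t))|$, and the uniform bound on $u_t$ to conclude $|\dot X_\la|\le K/\al_\la$. (One small point in your favor: the paper cites Proposition~\ref{prop-property-approximating-sol}(2)(ii) for $\sup_t|X_\la(t)-X(t)|<\infty$, but that bound actually comes from the exponential decay estimates in (2)(iii) as you say; (2)(ii) appears to be a typo.)
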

\begin{proof}
Let $\la\in(0,1)$. By Theorem \ref{thm-steepness} and the fact that $\sup_{t\in\R}|X_{\la}(t)-X(t)|<\infty$ due to Proposition\ref{prop-property-approximating-sol}$\rm(2)(ii)$, there exists some $\al_{\la}>0$ such that
\begin{equation}\label{steep-at-some-constant}
\sup_{t\in\R}u_{x}(t,X_{\la}(t))\leq-\al_{\la}.
\end{equation}
Then, since $u(t,X_{\la}(t))=\la$, implicit function theorem says that $X_{\la}(t)$ is continuously differentiable. Differentiating the equation $u(t,X_{\la}(t))=\la$ with respect to $t$, we find
\begin{equation*}
\dot{X}_{\la}(t)=-\frac{u_{t}(t,X_{\la}(t))}{u_{x}(t,X_{\la}(t))}.
\end{equation*}
The result then follows from \eqref{steep-at-some-constant} and the fact $\sup_{(t,x)\in\R\times\R}|u_{t}(t,x)|<\infty$.
\end{proof}


\section{Stability of transition fronts}\label{sec-stability}

In this section, we study the stability of transition fronts and prove Theorem \ref{thm-stability}. Throughout this section, we assume $\rm(H1)$-$\rm(H4)$. To this end, we first prove a Lemma.   Let $c_{\min}$, $c_{\max}$ be as in \eqref{tw-homo},
$M_1$ be as in \eqref{condition-1}, and $\Gamma=\Gamma_\alpha$ be as in \eqref{parameter-1}.

\begin{lem}\label{lem-technical}
\begin{itemize}
\item[\rm(i)] Let $I(r)=\int_{\R}J(x)e^{-rx}dx$ for $r\in\R$. Then,
\begin{equation*}
I(r)=1+\frac{I''(\tilde{r})}{2}r^{2}
\end{equation*}
for some $\tilde{r}=\tilde{r}(r)$ satisfying $|\tilde{r}|\leq|r|$.

\item[\rm(ii)]There exists $\alpha_0>0$ satisfying that for any
$0<\alpha\le \alpha_0$, there exists  $M_{2}=M_{2}(\al)>M_{1}+1$ such that
\begin{equation*}
|e^{\al(x-M_{1})}[J\ast\Ga](x)-1|\leq\frac{\al c_{\min}}{4},\quad\forall x\geq M_{2}.
\end{equation*}
In particular,
\begin{equation}\label{condition-2}
\big|[J\ast\Ga](x)-e^{-\al(x-M_{1})}\big|\leq\frac{\al c_{\min}}{4}e^{-\al(x-M_{1})},\quad\forall x\geq M_{2}.
\end{equation}
\end{itemize}
\end{lem}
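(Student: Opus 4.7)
My plan is to apply Taylor's theorem with Lagrange remainder. Assumption $\rm(H1)$, in particular the exponential moment bound \eqref{decay-convolution-kernel}, permits two differentiations under the integral sign, so $I\in C^{2}(\R)$ with $I''(r)=\int_{\R}x^{2}J(x)e^{-rx}dx$. At $r=0$ we have $I(0)=\int_{\R}J(x)dx=1$ by $\rm(H1)$ and $I'(0)=-\int_{\R}xJ(x)dx=0$ by the symmetry of $J$. Taylor's formula then gives $I(r)=1+\tfrac{1}{2}I''(\tilde r)r^{2}$ for some $\tilde r$ between $0$ and $r$, and in particular $|\tilde r|\le|r|$.

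\textbf{Setup for part (ii).} My plan is to isolate the exponential tail of $\Ga$ in the convolution. Splitting at $y=M_{1}+1$ (where the explicit exponential form of $\Ga$ begins) and changing variable $z=x-y$ in the tail piece,
\begin{equation*}
e^{\al(x-M_{1})}[J\ast\Ga](x)-1=[I(-\al)-1]-\int_{z>x-M_{1}-1}J(z)e^{\al z}dz+e^{\al(x-M_{1})}\int_{y<M_{1}+1}J(x-y)\Ga(y)dy.
\end{equation*}
It then suffices to bound each of the three terms on the right by $\al c_{\min}/12$ uniformly for $x\ge M_{2}$ and $\al\in(0,\al_{0}]$.

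\textbf{Bounding the three terms.} The first term is $O(\al^{2})$ by part (i): setting $C_{0}:=\sup_{|r|\le 1}|I''(r)|$ (finite by $\rm(H1)$), $|I(-\al)-1|\le\tfrac{1}{2}C_{0}\al^{2}$, so any choice $\al_{0}\le\min\{1,c_{\min}/(6C_{0})\}$ absorbs this contribution. The second term is the tail of the absolutely convergent integral $I(-\al)$ and hence tends to $0$ as $x\to\infty$. For the third, I bound $\Ga\le 1$ (using the range $\Ga:\R\to[0,1]$) and apply a Chebyshev-type estimate at the larger moment $2\al$:
\begin{equation*}
\int_{z>x-M_{1}-1}J(z)dz\le e^{-2\al(x-M_{1}-1)}I(-2\al),
\end{equation*}
so the third term is at most $e^{2\al}I(-2\al)\,e^{-\al(x-M_{1})}$, which decays to $0$ as $x\to\infty$. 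Choosing $M_{2}=M_{2}(\al)>M_{1}+1$ large enough makes the last two contributions jointly smaller than $\al c_{\min}/6$ for $x\ge M_{2}$; the triangle inequality then gives the first estimate, and multiplying through by $e^{-\al(x-M_{1})}$ yields \eqref{condition-2}.

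\textbf{Main obstacle.} The delicate step is the third term: the prefactor $e^{\al(x-M_{1})}$ grows exponentially in $x$, and it only gets absorbed because $\rm(H1)$ supplies exponential moments of $J$ of \emph{every} order, so I can pay an exponential rate $2\al>\al$ and still have $e^{\al(x-M_{1})}\cdot e^{-2\al(x-M_{1}-1)}\to 0$. Without this super-exponential decay the argument would fail, and one would not even get a bounded quantity in place of the term tending to zero. The other pieces---Taylor expansion and the dominated convergence argument for the vanishing tail of $I(-\al)$---are routine.
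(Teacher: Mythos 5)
Your proof is correct and follows essentially the same route as the paper: Taylor expansion at $r=0$ for part (i), and the same three-way decomposition of $e^{\al(x-M_1)}[J\ast\Ga](x)-1$ for part (ii) (your split is the paper's after the paper's own change of variables $y\mapsto x-y$, noting $I(\al)=I(-\al)$ by symmetry of $J$). The only difference is cosmetic bookkeeping of constants and that you make the paper's "since $J$ decays faster than any exponential, the first term tends to $0$" explicit via the Chebyshev-type bound $\int_{z>x-M_1-1}J(z)\,dz\le e^{-2\al(x-M_1-1)}I(-2\al)$, which is a nice quantitative supplement but not a different argument.
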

\begin{proof}
$\rm(i)$ By $\rm(H2)$, $I(r)$ is well-defined for any $r\in\R$ and it is smooth in $r$. We see $I(0)=1$. Since
\begin{equation*}
I'(r)=-\int_{R}J(y)ye^{-ry}dy\to0\,\,\text{as}\,\,r\to0
\end{equation*}
due to the symmetry of $J$, we have $I'(0)=0$. The result then follows from second-order Taylor expansion at $r=0$.

$\rm(ii)$ Since $\Ga(x)=e^{-\al(x-M_{1})}$ for $x\geq M_{1}+1$, we deduce
\begin{equation}\label{an-equality-12345}
\begin{split}
&e^{\al(x-M_{1})}[J\ast\Ga](x)-1\\
&\quad\quad=e^{\al(x-M_{1})}\int_{-\infty}^{M_{1}+1}J(x-y)\Ga(y)dy+\int_{M_{1}+1}^{\infty}J(x-y)e^{\al(x-y)}dy-1
\end{split}
\end{equation}
For the first term on the right hand side of \eqref{an-equality-12345}, we see that since $J(x)$ decays faster than exponential functions by $\rm(H2)$ at $-\infty$, it is not hard to check that $e^{\al(x-M_{1})}\int_{-\infty}^{M_{1}+1}J(x-y)\Ga(y)dy\to0$ as $x\to\infty$. Notice this limit is locally  uniform in $\al\in[0,\infty)$. Thus, there exists $\tilde{M}_{2}=\tilde{M}_{2}(\al)>0$ such that
\begin{equation*}
\bigg|e^{\al(x-M_{1})}\int_{-\infty}^{M_{1}+1}J(x-y)\Ga(y)dy\bigg|\leq\frac{\al c_{\min}}{8},\quad\forall x\geq\tilde{M}_{2}.
\end{equation*}

For the last two terms on the right hand side of \eqref{an-equality-12345}, we have
\begin{equation*}
\begin{split}
\bigg|\int_{M_{1}+1}^{\infty}J(x-y)e^{\al(x-y)}dy-1\bigg|&=\bigg|\int_{M_{1}+1-x}^{\infty}J(y)e^{-\al y}dy-1\bigg|\\
&\leq\bigg|\int_{\R}J(y)e^{-\al y}dy-1\bigg|+\bigg|\int_{-\infty}^{M_{1}+1-x}J(y)e^{-\al y}dy\bigg|
\end{split}
\end{equation*}
By $\rm(i)$,  we conclude that there is $\alpha_0>0$ such that  for any $0<\alpha\le\alpha_0$,
$\big|\int_{\R}J(y)e^{-\al y}dy-1\big|\leq\frac{\al c_{\min}}{16}$. Since $\big|\int_{-\infty}^{M_{1}+1-x}J(y)e^{-\al y}dy\big|\to0$  locally uniformly in $\al\in[0,\infty)$ as $x\to\infty$, we can find some $\bar{M}_{2}=\bar{M}_{2}(\al)>0$ such that
\begin{equation*}
\bigg|\int_{-\infty}^{M_{1}+1-x}J(y)e^{-\al y}dy\bigg|\leq\frac{\al c_{\min}}{16},\quad\forall x\geq\bar{M}_{2}.
\end{equation*}
Hence,
\begin{equation*}
\bigg|\int_{M_{1}+1}^{\infty}J(x-y)e^{\al(x-y)}dy-1\bigg|\leq\frac{\al c_{\min}}{8},\quad\forall x\geq\bar{M}_{2}.
\end{equation*}
The result follows with $M_{2}=\max\{\tilde{M}_{2},\bar{M}_{2}\}$.
\end{proof}

Now, we prove Theorem \ref{thm-stability}.

\begin{proof}[Proof of Theorem \ref{thm-stability}]
$\rm(1)$ Let $\alpha_0$ be as in Lemma \ref{lem-technical}. For given $0<\alpha\le\alpha_0$, set
\begin{equation*}
C_{\rm steep}=C_{\rm steep}(\al):=-\sup_{t\in\R}\sup_{x\in[X(t)-M_{2},X(t)+M_{2}]}u_{x}(t,x)>0
\end{equation*}
by Theorem \ref{thm-steepness}. Set
\begin{equation}\label{parameter-2}
C_{f_{u}}:=\sup_{(t,u)\in\R\times[0,\infty)}|f_{u}(t,u)|\quad\text{and}\quad A=A(\al):=\frac{2C_{f_{u}}+1}{C_{\rm steep}}.
\end{equation}
Finally, set
\begin{equation}\label{parameter-3}
\ep_{0}=\ep_{0}(\al):=\min\bigg\{\frac{1-\tilde{\theta}}{2},\frac{\theta}{2},\frac{1}{4A},\frac{c_{\min}}{4A}\bigg\}\quad\text{and}\quad \om=\om(\al):=\min\bigg\{\tilde{\beta},\frac{\al c_{\min}}{4}\bigg\},
\end{equation}
where $\tilde{\beta}>0$ is as in $\rm(H4)$. Clearly, $\tilde{\beta}\leq C_{f_{u}}$.

We are going to prove $\rm(1)$ by constructing appropriate  sub-solution and super-solution. We first construct a sub-solution. Let
\begin{equation*}
u^{-}(t,x;t_{0})=u(t,x-\zeta^{-}(t))-q(t)\Ga(x-\zeta^{-}(t)-X(t)), \quad t\geq t_{0},\,\,x\in\R,
\end{equation*}
where
\begin{equation*}
\zeta^{-}(t)=\zeta_{0}^{-}-\frac{A\ep}{\om}(1-e^{-\om(t-t_{0})})\quad\text{and}\quad q(t)=\ep e^{-\om(t-t_{0})}.
\end{equation*}
Clearly, $\dot{\zeta}^{-}(t)=-Aq(t)$ and $\dot{q}(t)=-\om q(t)$. We claim that $u^{-}=u^{-}(t,x;t_{0})$ is a sub-solution, that is, $u^{-}_{t}\leq J\ast u^{-}-u^{-}+f(t,u^{-})$. To show this, we consider three cases.

\paragraph{\textbf{Case 1}} $x-\zeta^{-}(t)-X(t)\leq-M_{2}$. For such $x$, $u^{-}=u(t,x-\zeta^{-}(t))-q(t)$. We see
\begin{equation*}
\begin{split}
&u^{-}_{t}-[J\ast u^{-}-u^{-}]-f(t,u^{-})\\
&\quad\quad=u_{t}(t,x-\zeta^{-}(t))-\dot{\zeta}^{-}(t)u_{x}(t,x-\zeta^{-}(t))-\dot{q}(t)-[[J\ast u(t,\cdot-\zeta^{-}(t))](x)-u(t,x-\zeta^{-}(t))]\\
&\quad\quad\quad+[[J\ast\Ga(\cdot-\zeta^{-}(t)-X(t))](x)-1]q(t)-f(t,u^{-})\\
&\quad\quad=f(t,u(t,x-\zeta^{-}(t)))-f(t,u^{-})-\dot{\zeta}^{-}(t)u_{x}(t,x-\zeta^{-}(t))+\om q(t)\\
&\quad\quad\quad+[[J\ast\Ga(\cdot-\zeta^{-}(t)-X(t))](x)-1]q(t).
\end{split}
\end{equation*}

Notice $\dot{\zeta}^{-}(t)u_{x}(t,x-\zeta^{-}(t))\geq0$. We see that $u(t,x-\zeta^{-}(t))\geq\frac{1+\tilde{\theta}}{2}$ by the choice of $M_{1}$ in \eqref{condition-1} and $M_{2}$. Since $\ep\leq\ep_{0}\leq\frac{1-\tilde{\theta}}{2}$, there holds $u^{-}\geq\tilde{\theta}$. Thus, by $\rm(H4)$, we find
\begin{equation*}
f(t,u(t,x-\zeta^{-}(t)))-f(t,u^{-})\leq-\tilde{\beta}q(t).
\end{equation*}
Moreover, trivially $[J\ast\Ga(\cdot-\zeta^{-}(t)-X(t))](x)\leq1$, since $\Ga\in[0,1]$. Hence,
\begin{equation*}
u^{-}_{t}-[J\ast u^{-}-u^{-}]-f(t,u^{-})\leq-\tilde{\beta}q(t)+\om q(t)\leq0.
\end{equation*}

\paragraph{\textbf{Case 2}} $x-\zeta^{-}(t)-X(t)\geq M_{2}$. For such $x$, $u^{-}=u(t,x-\zeta^{-}(t))-q(t)e^{-\al(x-\zeta^{-}(t)-X(t)-M_{1})}$. We see
\begin{equation*}
\begin{split}
&u^{-}_{t}-[J\ast u^{-}-u^{-}]-f(t,u^{-})\\
&\quad\quad=u_{t}(t,x-\zeta^{-}(t))-\dot{\zeta}^{-}(t)u_{x}(t,x-\zeta^{-}(t))-[\dot{q}(t)+\al q(t)(\dot{\zeta}^{-}(t)+\dot{X}(t))]e^{-\al(x-\zeta^{-}(t)-X(t)-M_{1})}\\
&\quad\quad\quad-[[J\ast u(t,\cdot-\zeta^{-}(t))](x)-u(t,x-\zeta^{-}(t))]\\
&\quad\quad\quad+[[J\ast\Ga(\cdot-\zeta^{-}(t)-X(t))](x)-e^{-\al(x-\zeta^{-}(t)-X(t)-M_{1})}]q(t)-f(t,u^{-})\\
&\quad\quad=f(t,u(t,x-\zeta^{-}(t)))-f(t,u^{-}))-\dot{\zeta}^{-}(t)u_{x}(t,x-\zeta^{-}(t))\\
&\quad\quad\quad-[\dot{q}(t)+\al q(t)(\dot{\zeta}^{-}(t)+\dot{X}(t))]e^{-\al(x-\zeta^{-}(t)-X(t)-M_{1})}\\
&\quad\quad\quad+[[J\ast\Ga(\cdot-\zeta^{-}(t)-X(t))](x)-e^{-\al(x-\zeta^{-}(t)-X(t)-M_{1})}]q(t)
\end{split}
\end{equation*}

Again, $\dot{\zeta}^{-}(t)u_{x}(t,x-\zeta^{-}(t))\geq0$. We see that $u(t,x-\zeta^{-}(t))\leq\frac{\theta}{2}$ by the choice of $M_{1}$ and $M_{2}$, and therefore, $u^{-}\leq u(t,x-\zeta^{-}(t))\leq\frac{\theta}{2}$. Thus, $f(t,u(t,x-\zeta^{-}(t)))=0=f(t,u^{-})$. Moreover,
\begin{equation*}
\dot{q}(t)+\al q(t)(\dot{\zeta}^{-}(t)+\dot{X}(t))=(-\om-A\al q(t)+\al\dot{X}(t))q(t)\geq(-\om-A\al\ep_{0}+\al c_{\min})q(t).
\end{equation*}
Also, by \eqref{condition-2}, we have
\begin{equation*}
\begin{split}
&\bigg|[J\ast\Ga(\cdot-\zeta^{-}(t)-X(t))](x)-e^{-\al(x-\zeta^{-}(t)-X(t)-M_{1})}\bigg|\\
&\quad\quad=\bigg|\int_{\R}J(x-\zeta^{-}(t)-X(t)-y)\Ga(y)dy-e^{-\al(x-\zeta^{-}(t)-X(t)-M_{1})}\bigg|\\
&\quad\quad\leq\frac{\al c_{\min}}{4}e^{-\al(x-\zeta^{-}(t)-X(t)-M_{1})}.
\end{split}
\end{equation*}
It then follows that
\begin{equation*}
u^{-}_{t}-[J\ast u^{-}-u^{-}]-f(t,u^{-})\leq\bigg(\om+A\al\ep_{0}-\al c_{\min}+\frac{\al c_{\min}}{4}\bigg)q(t)e^{-\al(x-\zeta^{-}(t)-X(t)-M_{1})}\leq0.
\end{equation*}

\paragraph{\textbf{Case 3}} $|x-\zeta^{-}(t)-X(t)|\leq[-M_{2},M_{2}]$. We compute
\begin{equation*}
\begin{split}
&u^{-}_{t}-[J\ast u^{-}-u^{-}]-f(t,u^{-})\\
&\quad\quad=u_{t}(t,x-\zeta^{-}(t))-\dot{\zeta}^{-}(t)u_{x}(t,x-\zeta^{-}(t))\\
&\quad\quad\quad-\dot{q}(t)\Ga(x-\zeta^{-}(t)-X(t))+q(t)\Ga'(x-\zeta^{-}(t)-X(t))[\dot{\zeta}^{-}(t)+\dot{X}(t)]\\
&\quad\quad\quad-[[J\ast u(t,\cdot-\zeta^{-}(t))](x)-u(t,x-\zeta^{-}(t))]\\
&\quad\quad\quad+[[J\ast\Ga(\cdot-\zeta^{-}(t)-X(t))](x)-\Ga(x-\zeta^{-}(t)-X(t))]q(t)-f(t,u^{-})\\
&\quad\quad=f(t,u(t,x-\zeta^{-}(t)))-f(t,u^{-})+Aq(t)u_{x}(t,x-\zeta^{-}(t))\\
&\quad\quad\quad+\om q(t)\Ga(x-\zeta^{-}(t)-X(t))+q(t)\Ga'(x-\zeta^{-}(t)-X(t))[\dot{X}(t)-Aq(t)]\\
&\quad\quad\quad+[[J\ast\Ga(\cdot-\zeta^{-}(t)-X(t))](x)-\Ga(x-\zeta^{-}(t)-X(t))]q(t).
\end{split}
\end{equation*}
We see that
\begin{equation*}
\begin{split}
|f(t,u(t,x-\zeta^{-}(t)))-f(t,u^{-})|&\leq C_{f_{u}}q(t),\\
Aq(t)u_{x}(t,x-\zeta^{-}(t))&\leq-AC_{\rm steep}q(t),\\
\om q(t)\Ga(x-\zeta^{-}(t)-X(t))&\leq\om q(t),\\
q(t)\Ga'(x-\zeta^{-}(t)-X(t))[\dot{X}(t)-Aq(t)]&\leq q(t)\Ga'(x-\zeta^{-}(t)-X(t))[c_{\min}-A\ep_{0}]\leq0,\\
[[J\ast\Ga(\cdot-\zeta^{-}(t)-X(t))](x)-\Ga(x-\zeta^{-}(t)-X(t))]q(t)&\leq q(t).
\end{split}
\end{equation*}
It then follows that
\begin{equation*}
u^{-}_{t}-[J\ast u^{-}-u^{-}]-f(t,u^{-})\leq(C_{f_{u}}-AC_{\rm steep}+\om+1)q(t)\leq0.
\end{equation*}

Hence, we have shown $u^{-}_{t}-[J\ast u^{-}-u^{-}]-f(t,u^{-})\leq0$, that is, $u^{-}$ is a sub-solution. By the first inequality in \eqref{initial-condition} and comparison principle, we conclude that
\begin{equation}\label{lower-bound}
u(t,x-\zeta^{-}(t))-q(t)\Ga(x-\zeta^{-}(t)-X(t))=u^{-}(t,x;t_{0})\leq u(t,x;t_{0},u_{0}).
\end{equation}

For the super-solution, we set
\begin{equation*}
u^{+}(t,x;t_{0})=u(t,x-\zeta^{+}(t))+q(t)\Ga(x-\zeta^{+}(t)-X(t)),\quad t\geq t_{0},\,\,x\in\R,
\end{equation*}
where
\begin{equation*}
\zeta^{+}(t)=\zeta_{0}^{+}+\frac{A\ep}{\om}(1-e^{-\om(t-t_{0})})\quad\text{and}\quad q(t)=\ep e^{-\om(t-t_{0})}.
\end{equation*}
The proof of $u^{+}=u^{+}(t,x;t_{0})$ being a super-solution, that is, $u^{+}_{t}\geq J\ast u^{+}-u^{+}+f(t,u^{+})$, follows from arguments for the sub-solution. We outline the proof for completeness.

\paragraph{\textbf{Case 1}} $x-\zeta^{+}(t)-X(t)\leq-M_{2}$. We compute
\begin{equation*}
\begin{split}
&u_{t}^{+}-[J\ast u^{+}-u^{+}]-f(t,u^{+})\\
&\quad\quad=f(t,u(t,x-\zeta^{+}(t)))-f(t,u^{+})-\dot{\zeta}^{+}(t)u_{x}(t,x-\zeta^{+}(t))-\om q(t)\\
&\quad\quad\quad-[[J\ast\Ga(\cdot-\zeta^{+}(t)-X(t))](x)-1]q(t)\\
&\quad\quad\geq\tilde{\beta}q(t)-\om q(t)\geq0.
\end{split}
\end{equation*}

\paragraph{\textbf{Case 2}} $x-\zeta^{+}(t)-X(t)\geq M_{2}$. We compute
\begin{equation*}
\begin{split}
&u_{t}^{+}-[J\ast u^{+}-u^{+}]-f(t,u^{+})\\
&\quad\quad=f(t,u(t,x-\zeta^{+}(t)))-f(t,u^{+})-\dot{\zeta}^{+}(t)u_{x}(t,x-\zeta^{+}(t))\\
&\quad\quad\quad+[\al(\dot{\zeta}^{+}(t)+\dot{X}(t))-\om]q(t)e^{-\al(x-\zeta^{+}(t)-X(t)-M_{1})}\\
&\quad\quad\quad-[[J\ast\Ga(\cdot-\zeta^{+}(t)-X(t))](x)-e^{-\al(x-\zeta^{+}(t)-X(t)-M_{1})}]q(t).
\end{split}
\end{equation*}
We see $u(t,x-\zeta^{+}(t))\leq\frac{\theta}{2}$, and therefore, $u^{+}\leq\theta$ since $\ep_{0}\leq\frac{\theta}{2}$. In particular, $f(t,u(t,x-\zeta^{+}(t)))-f(t,u^{+})=0$. Since $-\dot{\zeta}^{+}(t)u_{x}(t,x-\zeta^{+}(t))\geq0$,
\begin{equation*}
\al(\dot{\zeta}^{+}(t)+\dot{X}(t))-\om\geq\al Aq(t)+\al c_{\min}-\om\geq0
\end{equation*}
and
\begin{equation*}
[J\ast\Ga(\cdot-\zeta^{+}(t)-X(t))](x)-e^{-\al(x-\zeta^{+}(t)-X(t)-M_{1})}\leq\frac{\al c_{\min}}{4}e^{-\al(x-\zeta^{-}(t)-X(t)-M_{1})},
\end{equation*}
we have
\begin{equation*}
u_{t}^{+}-[J\ast u^{+}-u^{+}]-f(t,u^{+})\geq\bigg(\al Aq(t)+\al c_{\min}-\om-\frac{\al c_{\min}}{4}\bigg)q(t)e^{-\al(x-\zeta^{+}(t)-X(t)-M_{1})}\geq0.
\end{equation*}

\paragraph{\textbf{Case 3}} $|x-\zeta^{+}(t)-X(t)|\leq M_{2}$. We compute
\begin{equation*}
\begin{split}
&u_{t}^{+}-[J\ast u^{+}-u^{+}]-f(t,u^{+})\\
&\quad\quad=f(t,u(t,x-\zeta^{+}(t)))-f(t,u^{+})-\dot{\zeta}^{+}(t)u_{x}(t,x-\zeta^{+}(t))\\
&\quad\quad\quad-\om q(t)\Ga(x-\zeta^{+}(t)-X(t))-q(t)\Ga'(x-\zeta^{+}(t)-X(t))(Aq(t)+\dot{X}(t))\\
&\quad\quad\quad-[[J\ast\Ga(\cdot-\zeta^{+}(t)-X(t))](x)-\Ga(x-\zeta^{+}(t)-X(t))]q(t).
\end{split}
\end{equation*}
We see that
\begin{equation*}
\begin{split}
f(t,u(t,x-\zeta^{+}(t)))-f(t,u^{+})&\geq-C_{f_{u}}q(t),\\
-\dot{\zeta}^{+}(t)u_{x}(t,x-\zeta^{+}(t))&\geq0,\\
-\om q(t)\Ga(x-\zeta^{+}(t)-X(t))&\geq-\om q(t),\\
-q(t)\Ga'(x-\zeta^{+}(t)-X(t))(Aq(t)+\dot{X}(t))&\geq0,\\
-[[J\ast\Ga(\cdot-\zeta^{+}(t)-X(t))](x)-\Ga(x-\zeta^{+}(t)-X(t))]q(t)&\geq-q(t).
\end{split}
\end{equation*}
It then follows that
\begin{equation*}
u_{t}^{+}-[J\ast u^{+}-u^{+}]-f(t,u^{+})\geq(AC_{\rm steep}-C_{f_{u}}-\om-1)q(t)\geq0.
\end{equation*}

Hence, $u^{+}$ is a super-solution. By the second inequality in \eqref{initial-condition} and comparison principle, we conclude that
\begin{equation}\label{upper-bound}
u(t,x;t_{0},u_{0})\leq u^{+}(t,x;t_{0})=u(t,x-\zeta^{+}(t))+q(t)\Ga(x-\zeta^{+}(t)-X(t)).
\end{equation}
The result then follows from \eqref{lower-bound} and \eqref{upper-bound}.

$\rm(2)$ Note first that  there is $0<\alpha=\alpha(\beta_0)\le\alpha_0$ satisfying that  for any $\ep\in(0,\ep_{0}(\alpha)]$, there exists $\zeta_{0}^{\pm}=\zeta_{0}^{\pm}(\ep,u_{0})\in\R$ with $\zeta_{0}^{-}<\zeta_{0}^{+}$ such that
\begin{equation}\label{initial-condition-1234567890}
u(t_{0},x-\zeta_{0}^{-})-\ep\Ga_\alpha(x-\zeta_{0}^{-}-X(t_{0}))\leq u_{0}(x)\leq u(t_{0},x-\zeta_{0}^{+})+\ep\Ga_\alpha(x-\zeta_{0}^{+}-X(t_{0})).
\end{equation}
We then conclude $\rm(2)$ by applying $\rm(1)$ and noticing that $\lim_{t\to\infty}\zeta^{\pm}(t)$ exist and $\Ga\in[0,1]$.
\end{proof}

Note that the proof of Theorem \ref{thm-stability}$\rm(2)$ does not depend explicitly on the condition on $u_{0}$ as in the statement of Theorem \ref{thm-stability}$\rm(2)$; instead, it only needs \eqref{initial-condition-1234567890}. This observation allows us to prove the following corollary, which generalizes Theorem \ref{thm-stability}$\rm(2)$ to more general initial data.

\begin{cor}\label{cor-stability-12345}
Let $u(t,x)$ and $X(t)$ be as in Proposition\ref{prop-property-approximating-sol}(2). Let $\beta_{0}>0$. Suppose $t_{0}\in\R$ and $\tilde{u}_{0}\in C_{\rm unif}^{b}(\R,\R)$ satisfy
\begin{equation*}
\begin{cases}
\tilde{u}_{0}:\R\to[0,1],\quad \liminf_{x\to-\infty}\tilde{u}_{0}(x)>\theta;\\
\exists C>0\,\,\text{s.t.}\,\,|\tilde{u}_{0}-u(t_{0},x)|\leq Ce^{-\beta_{0}(x-X(t_{0}))}\,\,\text{for}\,\,x\in\R.
\end{cases}
\end{equation*}
Then, there exist $\om>0$  and $\tilde\epsilon_0>0$ such that for any $\ep\in(0,\tilde\ep_{0}]$, there are $\zeta_{\pm}=\zeta_{\pm}(\ep,u_{0})\in\R$ and $t_{1}=t_{1}(\ep,u_{0})$ such that
\begin{equation*}
u(t,x-\zeta_{-})-\ep e^{-\om(t-t_{1})}\leq u(t,x;t_{0},\tilde{u}_{0})\leq u(t,x-\zeta_{+})+\ep e^{-\om(t-t_{1})}
\end{equation*}
for all $x\in\R$ and $t\geq t_{1}$.
\end{cor}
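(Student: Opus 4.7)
The strategy, following the remark preceding the corollary, is to wait for a finite time $t_1-t_0$ during which the solution develops into a profile satisfying the sandwich hypothesis \eqref{initial-condition-1234567890} of Theorem \ref{thm-stability}$\rm(1)$, and then invoke Theorem \ref{thm-stability}$\rm(1)$ starting from time $t_1$. The output decay $e^{-\om(t-t_1)}$ can then be reabsorbed as $e^{-\om(t-t_1)}\leq1$ times a constant, giving the stated conclusion after possibly enlarging $\tilde\ep_0$.

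\textbf{Upper bound propagation.} From the exponential decay $|\tilde{u}_0-u(t_0,x)|\le Ce^{-\beta_0(x-X(t_0))}$ at $+\infty$ and $\tilde u_0\le 1$, one chooses $\zeta_0^{+}$ large so that $\tilde u_0(x)\le u(t_0,x-\zeta_0^{+})+\tilde\ep\,\Gamma_\alpha(x-\zeta_0^{+}-X(t_0))$ uniformly on $\R$; on the far right we use the decay assumption together with $\Gamma_\alpha\sim e^{-\alpha(\cdot-M_1)}$, while on the bulk region we use that shifting the front by a large positive amount makes $u(t_0,\cdot-\zeta_0^{+})$ small. Running the super-solution argument of Theorem \ref{thm-stability}$\rm(1)$ from $t_0$ keeps such an upper bound in force for all $t\ge t_0$.

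\textbf{Lower bound via ignition spreading.} Let $\mu:=\liminf_{x\to-\infty}\tilde u_0(x)>\theta$ and fix $\mu'\in(\theta,\mu)$, so $\tilde u_0(x)\ge\mu'$ on some half-line $(-\infty,R_0]$. Choose a smooth, compactly supported $w_0\le\tilde u_0$ with $w_0\equiv\mu'$ on a long interval $[-R,R_0]$. The solution of the homogeneous ignition equation $v_t=J\ast v-v+f_{\min}(v)$ from $w_0$ spreads to the right (in the sense of Coville's thesis and Proposition \ref{prop-property-approximating-sol}): there exist $t_1>t_0$ and $R_1$, as large as we wish, such that $v(t_1,x)\ge1-\delta$ for $x\le R_1$, with $\delta>0$ as small as we want. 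By comparison, $u(t_1,x;t_0,\tilde u_0)\ge 1-\delta$ on $(-\infty,R_1]$. Using $\rm(H4)$ (contraction near $u=1$) and the exponential profile of $u(t,x)$ near its leading edge from Proposition \ref{prop-property-approximating-sol}$\rm(2)(iii)$, we can choose $\zeta_0^{-}$ very negative (and $t_1$ large enough) so that
\begin{equation*}
u(t_1,x;t_0,\tilde u_0)\ge u(t_1,x-\zeta_0^{-})-\tilde\ep\,\Gamma_\alpha(x-\zeta_0^{-}-X(t_1))
\end{equation*}
on $\R$: on $(-\infty,R_1]$ both sides are within $\delta$ of $1$ (and the right-hand side is at most $1-\tilde\ep+\tilde\ep=1$ with the contribution from $\Ga_\alpha$ absorbing the defect); on $[R_1,\infty)$ shifting the front far left makes $u(t_1,\cdot-\zeta_0^{-})$ close to $1$, while $\tilde\ep\Gamma_\alpha$ can still be made to dominate the gap.

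\textbf{Conclusion.} With both inequalities of \eqref{initial-condition-1234567890} in place at time $t_1$, Theorem \ref{thm-stability}$\rm(1)$ applied to $u_0:=u(t_1,\cdot;t_0,\tilde u_0)$ yields
\begin{equation*}
u(t,x-\zeta^{-}(t))-\tilde\ep e^{-\om(t-t_1)}\Ga_\alpha(\cdot)\le u(t,x;t_0,\tilde u_0)\le u(t,x-\zeta^{+}(t))+\tilde\ep e^{-\om(t-t_1)}\Ga_\alpha(\cdot)
\end{equation*}
for $t\ge t_1$, and since $\zeta^{\pm}(t)\to\zeta_0^{\pm}\pm A\tilde\ep/\om=:\zeta_\pm$ and $\Gamma_\alpha\le1$, the claimed estimate follows. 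The principal difficulty is the lower-bound step: upgrading the qualitative hypothesis $\liminf_{x\to-\infty}\tilde u_0>\theta$ to a quantitative bound $u(t_1,x;t_0,\tilde u_0)\ge u(t_1,x-\zeta_0^{-})-\tilde\ep\,\Gamma_\alpha$ of Berestycki-Hamel type, which requires combining the nonlocal ignition-spreading (pushing $u$ close to $1$ on a large set in finite time) with the exponential contraction near $u=1$ from $\rm(H4)$ to synchronize the shape of the evolved solution with a shifted copy of the transition front.
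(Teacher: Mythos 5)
Your overall strategy matches the paper's: first run the solution forward so that it develops into a profile satisfying the sandwich hypothesis \eqref{initial-condition-1234567890}, then invoke Theorem~\ref{thm-stability}. For the upper bound you and the paper both replace $\tilde u_0$ by a function $u_0\ge\tilde u_0$ satisfying $u_0(-\infty)=1$ and apply Theorem~\ref{thm-stability}(2) combined with comparison, so that part is essentially the paper's argument.

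For the lower bound, however, the two proofs diverge, and your route has a genuine gap. The paper places under $\tilde u_0$ a function $\bar u_0$ that equals a constant $\lambda_0\in(\theta,1)$ on an \emph{entire left half-line} $(-\infty,x_1]$ (and vanishes far right), and then compares the solution of \eqref{main-eqn} with the solution $u_B$ of the auxiliary \emph{bistable} equation $u_t=J\ast u-u+f_B(u)$, where $f_B\le f_{\min}$ is a bistable nonlinearity. This allows the paper to directly invoke the known asymptotic stability of bistable traveling waves \cite[Theorem~4.2]{BaFiReWa97}, which immediately yields that $u_B(t,-\infty;\bar u_0)$, hence $u(t,-\infty;t_0,\tilde u_0)$, converges to $1$ exponentially fast. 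In contrast, you take $w_0$ to be \emph{compactly supported}, equal to $\mu'$ on an interval $[-R,R_0]$, and compare with the homogeneous ignition equation $v_t=J\ast v-v+f_{\min}(v)$. Your claim that $v(t_1,x)\ge 1-\delta$ for all $x\le R_1$ cannot hold: since $w_0$ vanishes identically on $(-\infty,-R-1]$, the solution $v$ is close to $0$ (not $1$) for $x$ sufficiently far to the left at any fixed finite time; the plateau near $1$ is confined to a bounded, linearly expanding interval, not a left half-line. To salvage this you would need to keep $w_0\equiv\mu'$ on all of $(-\infty,R_0]$, and even then the needed spreading result for the nonlocal ignition equation (convergence to $1$ on expanding half-lines with an explicit exponential rate, starting from ignited initial data) is asserted but not established; it does not follow directly from the existence of the traveling wave $\phi_{\min}$ in Coville's thesis or from Proposition~\ref{prop-property-approximating-sol}. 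This is precisely the technical step the paper's bistable comparison is designed to avoid, since for the bistable equation both $0$ and $1$ are stable equilibria and the required stability theorem is already available in \cite{BaFiReWa97}.

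Once the lower bound at time $t_1$ is fixed (e.g., by replacing your ignition comparison with the paper's bistable one, or by actually proving the ignition spreading statement with a rate), the concluding step — combining the two one-sided estimates and applying Theorem~\ref{thm-stability}(1), absorbing $\Gamma_\alpha\le 1$ and the limiting shifts $\zeta^{\pm}(t)\to\zeta_\pm$ — is correct and matches the paper.
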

\begin{proof}
The idea is that we allow the solution $u(t,x;t_{0},\tilde{u}_{0})$ to evolve for some time. Due to the asymptotical stability of $1$, it will develop into some shape satisfying \eqref{initial-condition-1234567890}. Then, we apply Theorem \ref{thm-stability}$\rm(2)$ at that time to conclude the result.

Modifying $\tilde{u}_{0}$ near $-\infty$, we can find $u_{0}\in C_{\rm unif}^{b}(\R,\R)$ satisfying $u_{0}\geq\tilde{u}_{0}$ and
\begin{equation*}
\begin{cases}
u_{0}:\R\to[0,1],\quad u_{0}(-\infty)=1;\\
\exists C>0\,\,\text{s.t.}\,\,|u_{0}-u(t_{0},x)|\leq Ce^{-\beta_{0}(x-X(t_{0}))}\,\,\text{for}\,\,x\in\R.
\end{cases}
\end{equation*}
In particular, we can apply Theorem \ref{thm-stability}$\rm(2)$ to $u_{0}$ to conclude that
\begin{equation}\label{one-sided-estimate}
u(t,x;t_{0},u_{0})\leq u(t,x-\zeta^{+})+q(t)\Ga_{\al}(x-\zeta^{+}-X(t)),
\end{equation}
where $q(t)=e^{-\om(t-t_{0})}$ and $\Ga_{\al}$ is the same as in the proof of Theorem \ref{thm-stability}$\rm(2)$. Notice \eqref{one-sided-estimate} holds for some $0<\al\le\alpha_0$. Since $u_{0}\geq\tilde{u}_{0}$, we have from comparison and \eqref{one-sided-estimate} that
\begin{equation}\label{one-sided-estimate-1}
u(t,x;t_{0},\tilde{u}_{0})\leq u(t,x-\zeta^{+})+q(t)\Ga_{\al}(x-\zeta^{+}-X(t)).
\end{equation}
Thus, for a fixed small $\ep>0$, we can find some $t_{1}=t_{1}(\ep)\gg t_{0}$ such that $q(t)\leq\ep$, and then,
\begin{equation}\label{one-sided-estimate-2}
u(t_{1},x;t_{0},\tilde{u}_{0})\leq u(t_{1},x-\zeta^{+})+\ep\Ga_{\al}(x-\zeta^{+}-X(t_{1})).
\end{equation}

Next, we construct an appropriate lower bound for $u(t_{1},x;t_{0},\tilde{u}_{0})$. This actually follows from the asymptotic stability of the equilibrium $1$. More precisely, since $\liminf_{x\to-\infty}\tilde{u}_{0}(x)>\theta$, there exist $\la_{0}\in(\theta,\liminf_{x\to-\infty}\tilde{u}_{0}(x))$ and a function $\bar{u}_{0}\in C_{\rm unif}^{b}(\R,\R)$ satisfying
\begin{equation*}
\exists\,x_{1}<x_{2}\,\,\text{s.t.}\,\, \bar{u}_{0}(x)=
\begin{cases}
\la_{0}\quad&\text{if}\quad x\leq x_{1},\\
0\quad&\text{if}\quad x\geq x_{2}
\end{cases}
\end{equation*}
such that $\bar{u}_{0}\leq\tilde{u}_{0}$. Now, we consider the solution $u_{B}(t,x;\bar{u}_{0})$ with initial data $u_{B}(0,\cdot;\bar{u}_{0})=\bar{u}_{0}$ of the following homogeneous equation
\begin{equation}\label{eqn-homo-bistable}
u_{t}=J\ast u-u+f_{B}(u)
\end{equation}
where $f_{B}:[0,1]\to\R$ is a bistable nonlinearity satisfying the following conditions
\begin{equation*}
\begin{cases}
f_{B}\in C^{2}([0,1]),\,\,f_{B}(0)=0,\,\,f_{B}(\theta)=0,\,\,f_{B}(1)=0,\\
f_{B}'(0)<0,\,\,f_{B}'(1)<0,\\
f_{B}(u)<0\,\,\text{for}\,\,u\in(0,\theta),\,\,0<f_{B}(u)\leq f_{\min}(u)\,\,\text{for}\,\,u\in(\theta,1),\\
\int_{0}^{1}f_{B}(u)du>0\,\,\text{and}\,\,1+f_{B}'(u)>0\,\,\text{for}\,\,u\in[0,1].
\end{cases}
\end{equation*}
Let $c_{B}>0$ be the unique speed of the traveling waves of \eqref{eqn-homo-bistable}, and we fix some profile $\phi_{B}$. Since $f_{B}\leq f_{\min}\leq f(t,u)$ on $[0,1]$, we conclude from the comparison principle that
\begin{equation*}
u_{B}(t-t_{0},x;\bar{u}_{0})\leq u(t,x;t_{0},u_{0}).
\end{equation*}
It is known (see \cite[Theorem 4.2]{BaFiReWa97}) that there exists $\zeta_{B}^{\pm}\in\R$, $\ep_{B}>0$ and $\om_{B}>0$ such that
\begin{equation*}
\phi_{B}(x-c_{B}t-\zeta_{B}^{-})-\ep_{B}e^{-\om_{B}(t-t_{0})}\leq u_{B}(t-t_{0},x;\bar{u}_{0})\leq\phi_{B}(x-c_{B}t-\zeta_{B}^{+})+\ep_{B}e^{-\om_{B}(t-t_{0})}.
\end{equation*}
In particular, $u_{B}(t-t_{0},-\infty;\bar{u}_{0})$, and hence, $u(t,x;t_{0},u_{0})$, approaches to $1$ exponentially fast. Thus, making $\al>0$ so small that $\om$ is small and choosing $t_{1}$ larger if necessary, we can guarantee that
\begin{equation*}
u(t_{1},-\infty;t_{0},u_{0})\geq 1-\frac{\ep}{2}.
\end{equation*}
Thus, choosing $\al>0$ further small if necessary, we can find $\zeta^{-}\in\R$ such that
\begin{equation}\label{one-sided-estimate-3}
u(t_{1},x-\zeta^{-})-\ep\Ga_{\al}(x-\zeta^{-}-X(t_{1}))\leq u(t_{1},x;t_{0},\tilde{u}_{0}).
\end{equation}

Finally, in the presence of \eqref{one-sided-estimate-1} and \eqref{one-sided-estimate-3}, we can apply Theorem \ref{thm-stability}$\rm(2)$ to $u(t_{1},x;t_{0},\tilde{u}_{0})$ to conclude the result.
\end{proof}


\section{Asymptotic stability of transition fronts}\label{sec-asymptotic-stability}

In this section, we study the asymptotic stability of $u(t,x)$ and prove Theorem \ref{thm-asymptotic-stability}. We assume $\rm(H1)$-$\rm(H4)$ throughout this section.

We first prove two lemmas. The first one concerns the exponential decay of $u_{x}(t,x+X(t))$ at $\pm\infty$.

\begin{lem}\label{lem-decaying-space-derivative}
There exist $\tilde{c}_{\pm}>0$, $\tilde{C}_{\pm}>0$ and $\tilde{h}_{\pm}>0$ such that
\begin{equation*}
\begin{split}
0> u_{x}(t,x)&\geq -\tilde{C}_{+}e^{-\tilde{c}_{+}(x-X(t)-\tilde{h}_{+})},\quad\forall x\geq X(t)+\tilde{h}_{+},\\
0> u_{x}(t,x)&\geq -\tilde{C}_{-}e^{\tilde{c}_{-}(x-X(t)+\tilde{h}_{-})},\quad\forall x\leq X(t)-\tilde{h}_{-}
\end{split}
\end{equation*}
for all $t\in\R$.
\end{lem}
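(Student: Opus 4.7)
The plan is to prove the tail estimates first for the approximating solutions $u(t,x;s)$, uniformly in $s<0$, and then pass to the limit $s\to-\infty$ along the subsequence of Proposition~\ref{prop-property-approximating-sol}(2)(i), invoking the local uniform convergence $u_x(t,x;s)\to u_x(t,x)$ recorded in \eqref{local-unifrom-convergence-derivative}. The strict inequality $u_x(t,x)<0$ on bounded windows around $X(t)$ is already supplied by Theorem~\ref{thm-steepness}, and can be extended to all of $\R$ using the equation $(u_x)_t=J\ast u_x-u_x+f_u(t,u)u_x$ together with a standard strong maximum principle for nonlocal linear equations (a nontrivial nonpositive solution cannot vanish at any point). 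So the real content of the lemma is the quantitative tail decay.

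The workhorse is the explicit representation derived in the proof of Theorem~\ref{thm-regularity-approx},
\begin{equation*}
u_x(t,x;s)=\phi'_{\min}(x-y_s)\,e^{-\int_s^t(1-f_u)d\tau}+\int_s^t b(r,x;s)\,e^{-\int_r^t(1-f_u)d\tau}\,dr,
\end{equation*}
combined with the standard exponential tails of the wave-profile derivative: $|\phi'_{\min}(\xi)|\leq Ce^{-\lambda_+\xi}$ as $\xi\to+\infty$ and $|\phi'_{\min}(\xi)|\leq Ce^{\lambda_-\xi}$ as $\xi\to-\infty$, where $\lambda_\pm>0$ are the positive roots of the characteristic equations obtained by linearizing \eqref{tw-homo} at the two equilibria (the bound at $-\infty$ uses $f_{\min}'(1)<0$).

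For the right tail, choose $\tilde h_+$ so large that Proposition~\ref{prop-property-approximating-sol}(1)(ii) forces $u(\tau,y;s)\leq\theta$, hence $f_u(\tau,u(\tau,y;s))=0$ by $\rm(H2)$, whenever $y\geq X(\tau;s)+\tilde h_+$. Since $\dot X\geq c_{\min}>0$, the condition $x\geq X(t;s)+\tilde h_+$ propagates backward to every $\tau\in[s,t]$, collapsing the exponential factor to $e^{-(t-s)}$ and $e^{-(t-r)}$. The first term is then controlled by $|\phi_{\min}'(x-y_s)|\leq Ce^{-\lambda_+(x-y_s)}$ together with $x-y_s\geq (x-X(t;s))+c_{\min}(t-s)+C_2$, established in the proof of Theorem~\ref{thm-uniform-steepness-approx}. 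The second term is controlled by splitting the $y$-integral in $b(r,x;s)=\int J'(x-y)u(r,y;s)\,dy$ at $y=X(r;s)+h_+$ and using the exponential upper tail of $u$ on one side and the exponential moments of $|J'|$ on the other (these are inherited from \eqref{decay-convolution-kernel} by standard integration-by-parts tricks); after inserting $x-X(r;s)\geq(x-X(t;s))+c_{\min}(t-r)$ the $r$-integration becomes a convergent geometric integral and produces $|u_x(t,x;s)|\leq \tilde C_+e^{-\tilde c_+(x-X(t;s)-\tilde h_+)}$ uniformly in $s$.

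The left tail is the main obstacle, because $e^{-\int_s^t(1-f_u)d\tau}$ is no longer automatically small. The remedy is to introduce the first-entry time $r^*=r^*(x,s)\in[s,t]$ defined by $X(r^*;s)=x+M^*$, where $M^*$ is chosen so that $u(\tau,x;s)\geq\tilde\theta$ whenever $x\leq X(\tau;s)-M^*$; on $[r^*,t]$ the bound $f_u\leq-\tilde\beta$ from $\rm(H4)$ supplies a genuine contraction factor $e^{-(1+\tilde\beta)(t-r^*)}$, while on $[s,r^*]$ only the crude $|f_u|\leq C_{f_u}$ is available. The saving grace is that $x-y_s$ is then \emph{large and positive}, of order $c_{\min}(r^*-s)$ (since $y_s=X(s;s)-C_2$ and $X(r^*;s)-X(s;s)\geq c_{\min}(r^*-s)$), so the $+\infty$ decay of $\phi_{\min}'$ absorbs the growth accumulated on $[s,r^*]$; an analogous splitting of $b(r,x;s)$ at $y=X(r;s)-h_-$, using the left-tail estimate $1-u(r,y;s)\leq e^{c_-(y-X(r;s)+h_-)}$, yields the matching bound for $x\leq X(t;s)-\tilde h_-$. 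The hardest bookkeeping will be to ensure that $\lambda_+c_{\min}$ beats the worst-case growth exponent from $|f_u|\leq C_{f_u}$ on $[s,r^*]$, which may force enlarging $\tilde h_-$ and $M^*$ to create enough margin; once the uniform-in-$s$ estimates are in hand, passing $s\to-\infty$ closes the proof.
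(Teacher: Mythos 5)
Your high-level plan — prove uniform-in-$s$ tail estimates for $u_x(t,x;s)$ and then pass to the limit $s\to-\infty$ along the convergent subsequence using \eqref{local-unifrom-convergence-derivative} — is exactly how the paper proceeds. But the mechanism you propose for the uniform bound is genuinely different from the paper's, and it has a gap.

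The paper does \emph{not} return to the Duhamel representation \eqref{formula-for-derivative}. Instead it observes that $w:=u_x(\cdot,\cdot;s)$ solves the linear equation $w_t=J\ast w-w+f_u(t,u)w$, that $f_u=0$ on $\{x\geq X(t;s)+\tilde h\}$ and $f_u\leq-\tilde\beta$ on $\{x\leq X(t;s)-\hat h\}$, and then directly compares $w$ with a moving exponential barrier $-\tilde C e^{\mp\tilde c_\pm(x-X(t;s)\mp\tilde h_\pm)}$ using Proposition \ref{prop-app-comparison}. The key computation is that, for $N[v]=v_t-[J\ast v-v]$ (right tail) or $N[v]=v_t-[J\ast v-v]+\tilde\beta v$ (left tail), one has $N[\text{barrier}]\leq0$ once $\tilde c_\pm$ is taken small — this uses only $\dot X\geq c_{\min}>0$, the smallness of $\int_\R J(y)e^{cy}\,dy-1$ for small $c$ (your Lemma \ref{lem-technical}(i)), and, in the left case, the margin $\tilde\beta>0$. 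Nothing about $J'$ enters. The boundary condition on the lateral edge $x=X(t;s)\mp\tilde h_\pm$ and the initial condition at $t=s$ (where $u_x=\phi_{\min}'(\cdot-y_s)$ has known exponential tails) are checked by crude uniform bounds, and the comparison principle finishes it.

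Your Duhamel route has a real gap. To make the $\int_s^t b(r,x;s)e^{-\int_r^t(1-f_u)}\,dr$ term decay exponentially in $x-X(t;s)$ you need the tails $\int_A^\infty|J'(z)|\,dz$ to decay exponentially in $A$. But $\rm(H1)$ only gives $J'\in L^1$ and $\int J(y)e^{\lambda y}\,dy<\infty$; the ``integration-by-parts trick'' transfers exponential moments to the \emph{signed} quantity $\int J'(y)e^{\lambda y}\,dy=-\lambda\int J(y)e^{\lambda y}\,dy$, not to $\int|J'(y)|e^{\lambda y}\,dy$, and the two can differ dramatically when $J'$ oscillates. This is precisely the obstruction the paper's barrier argument avoids, since the barrier computation only ever involves $\int J(y)e^{cy}\,dy$.

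Separately, your worry about whether $\lambda_+c_{\min}$ ``beats'' the growth exponent $C_{f_u}-1$ on $[s,r^*]$ is misplaced, and the proposed fix (enlarging $\tilde h_-,M^*$) cannot work because it changes prefactors, not rates. What actually saves the day — and what you didn't exploit — is the same decomposition used in the proof of Theorem \ref{thm-regularity-approx}: with $r^*$ taken to coincide with $t_{\rm last}(x;s)$, one has $f_u(\tau,u(\tau,x;s))=0$ on $[s,t_{\rm first}(x;s)]$ and the time spent in the middle window is uniformly bounded by $T$ from \eqref{growth-period}; hence $e^{-\int_s^{r^*}(1-f_u)}\leq e^{-(t_{\rm first}-s)}e^{C_0T}$ is \emph{bounded}, with no restriction on the data. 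If you want to salvage the Duhamel approach for the $\phi_{\min}'$-term, that is the observation to make; but the $b$-term still has the $|J'|$ problem, so the cleanest course is the barrier comparison the paper uses.
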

\begin{proof}
We prove the first estimate for $u_{x}(t,x)$. By monotonicity, $u_{x}(t,x)<0$. Since $X(t;s)$ and $u_{x}(t,x;s)$ converge locally uniformly to $X(t)$ and $u_{x}(t,x)$, respectively, it suffices to show
\begin{equation}\label{decaying-space-derivative-approximate-sol}
u_{x}(t,x;s)\geq-\tilde{C}e^{-\tilde{c}(x-X(t;s)-\tilde{h})},\quad\forall x\geq X(t;s)+\tilde{h}
\end{equation}
for all $s<0$, $t\geq s$. To this end, we set
\begin{equation*}
\tilde{C}=\sup_{s<0,t\geq s\atop x\in\R}|u_{x}(t,x;s)|\quad\text{and}\quad\tilde{h}\geq\sup_{s<0,t\geq s}|X(t;s)-X_{\theta}(t;s)|.
\end{equation*}
By the choice of $\tilde{h}$, we have $f(t,u(t,x;s))=0$ for $x\geq X(t;s)+\tilde{h}$. Since $u_{x}(t,x;s)$ satisfies $(u_{x})_{t}=J\ast u_{x}-u_{x}+f_{u}(t,u(t,x;s))u_{x}$, we see that
$u_{x}(t,x;s)$ satisfies
\begin{equation}\label{comparison-1234567}
(u_{x})_{t}=J\ast u_{x}-u_{x},\quad x\geq X(t;s)+\tilde{h}.
\end{equation}

Define
\begin{equation*}
N[v]=v_{t}-[J\ast v-v].
\end{equation*}
We compute
\begin{equation*}
\begin{split}
N[-\tilde{C}e^{-c(x-X(t)-\tilde{h})}]&=-\tilde{C}e^{-c(x-X(t)-\tilde{h})}\bigg[c\dot{X}(t;s)-\int_{\R}J(y)e^{cy}dy+1\bigg]\\
&\leq-\tilde{C}e^{-c(x-X(t)-\tilde{h})}\bigg[cc_{\min}-\int_{\R}J(y)e^{cy}dy+1\bigg].
\end{split}
\end{equation*}
Setting $g(c)=cc_{\min}-\int_{\R}J(y)e^{cy}dy+1$, we see $g(0)=0$ and $g'(c)=c_{\min}-\int_{\R}yJ(y)e^{cy}dy$. Since $c_{\min}>0$ and $\int_{\R}yJ(y)e^{cy}dy\to0$ as $c\to0$ by the symmetry of $J$, we are able to find $\tilde{c}>0$ such that $g(\tilde{c})>0$. It then follows that
$N[-\tilde{C}e^{-\tilde{c}(x-X(t)-\tilde{h})}]\leq0$. In particular, by \eqref{comparison-1234567}, we have
\begin{equation}\label{comparsion-123-1}
N[u_{x}]=0\geq N[-\tilde{C}e^{-\tilde{c}(x-X(t)-\tilde{h})}],\quad x\geq X(t;s)+\tilde{h},\quad t\geq s.
\end{equation}
Moreover, we trivially have
\begin{equation}\label{comparsion-123-2}
u_{x}(t,x;s)\geq-\tilde{C}\geq-\tilde{C}e^{-\tilde{c}(x-X(t)-\tilde{h})},\quad x\leq X(t;s)+\tilde{h},\quad t\geq s.
\end{equation}
Also, at the initial moment $s$, choosing $\tilde{c}$ smaller and $\tilde{h}$ larger (if necessary), we have
\begin{equation}\label{comparsion-123-3}
u_{x}(s,x;s)=\phi_{\min}'(x-y_{s})\geq-\tilde{C}e^{-\tilde{c}(x-X(s)-\tilde{h})}.
\end{equation}
We then conclude from \eqref{comparsion-123-1}, \eqref{comparsion-123-2}, \eqref{comparsion-123-3} and the comparison principle (see Proposition \ref{prop-app-comparison}) that \eqref{decaying-space-derivative-approximate-sol} holds. We point out that the above arguments work due to the fact that $\dot{X}(t;s)\geq c_{\min}>0$.

For the second estimate for $u_{x}(t,x)$, we notice that if we choose $\hat{h}$ be such that
\begin{equation*}
\hat{h}\geq\sup_{s<0,t\geq s}|X(t;s)-X_{\tilde{\theta}}(t;s)|,
\end{equation*}
where $\tilde{\theta}$ is as in $\rm(H4)$. Then, $f_{u}(t,u(t,x;s))\leq-\tilde{\beta}$ for $x\leq X(t;s)-\hat{h}$. It then follows that $u_{x}(t,x;s)$ satisfies
\begin{equation*}
(u_{x})_{t}\geq J\ast u_{x}-u_{x}-\tilde{\beta}u_{x},\quad x\leq X(t;s)-\hat{h}.
\end{equation*}
The rest of the proof then follows from similar arguments as above if we consider
\begin{equation*}
N[v]=v_{t}-[J\ast v-v]+\tilde{\beta}v.
\end{equation*}
This completes the proof.
\end{proof}

The second lemma, improving Theorem \ref{thm-stability}$\rm(1)$,  is the key to Theorem \ref{thm-asymptotic-stability}. Shall not cause any confusion with $u(t,x;s)$, we will use $u(t,x;t_{0})$ to denote a solution of \eqref{main-eqn} with initial condition at time $t_{0}$. Recall $\al>0$ is small, and $\Ga=\Ga_{\al}$, $A=A(\al)$, $\ep_{0}=\ep_{0}(\al)$ and $\om=\om(\al)$ are as in \eqref{parameter-1}, \eqref{parameter-2} and \eqref{parameter-3}.

\begin{lem}\label{lem-key-asymptotic-stability}
Suppose there exist $\zeta\in\R$, $\de>0$ and $\ep\in(0,\ep_{0}]$ such that
\begin{equation}\label{initial-condition-key-lem}
u(\tau,x-\zeta)-\ep\Ga(x-\zeta-X(\tau))\leq u(\tau,x;t_{0})\leq u(\tau,x-\zeta-\de)+\ep\Ga(x-\zeta-\de-X(\tau))
\end{equation}
for some $\tau\geq t_{0}$. Then, there exist large $\si=\si(\al)>0$ and small $\tilde{\ep}=\tilde{\ep}(\al,\ep_{0})>0$ such that
\begin{equation*}
\begin{split}
&u(t,x-\zeta(t))-q(t)\Ga(x-\zeta(t)-X(t))\\
&\quad\quad\leq u(t,x;t_{0})\leq u(t,x-\zeta(t)-\de(t))-q(t)\Ga(x-\zeta(t)-\de(t)-X(t))
\end{split}
\end{equation*}
for all $t\geq\tau+\si$, where
\begin{equation*}
\begin{split}
\zeta(t)&\in[\zeta-\frac{2A\ep}{\om},\zeta+\tilde{\ep}\min\{1,\de\}],\\
0\leq\de(t)&\leq\de-\tilde{\ep}\min\{1,\de\}+\frac{4A\ep}{\om},\\
0\leq q(t)&\leq(\frac{\ep}{2}+\tilde{C}\tilde{\ep}\min\{1,\de\})e^{-\om(t-\tau-\si)},
\end{split}
\end{equation*}
where $\tilde{C}>0$ is some constant and $\tilde{C}\tilde{\ep}\leq\frac{\ep_{0}}{2}$.
\end{lem}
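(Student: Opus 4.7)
The plan is a classical squeezing argument, adapted to the nonlocal setting by replacing the strong maximum principle (unavailable here) with the spreading estimate of Lemma \ref{lem-tech-1234567}. I would proceed in three steps: (i) use Theorem \ref{thm-steepness} to extract a pointwise ``slack'' at time $\tau$ in one of the two inequalities of \eqref{initial-condition-key-lem}; (ii) propagate this slack forward by a waiting time $\si_0$ and trade it for a horizontal shrinkage of $\de$ of size $\tilde{\ep}\min\{1,\de\}$; (iii) apply Theorem \ref{thm-stability}$\rm(1)$ over $[\tau,\tau+\si_0]$ and $[\tau+\si_0,\tau+\si]$ to collect the stated error budget.

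For step (i), set
\begin{equation*}
w_-(\tau,x) := u(\tau,x;t_0) - u(\tau,x-\zeta) + \ep\Ga(x-\zeta-X(\tau)),
\end{equation*}
\begin{equation*}
w_+(\tau,x) := u(\tau,x-\zeta-\de) + \ep\Ga(x-\zeta-\de-X(\tau)) - u(\tau,x;t_0),
\end{equation*}
both nonnegative by hypothesis. Their sum majorizes $u(\tau,x-\zeta-\de)-u(\tau,x-\zeta)$, and Theorem \ref{thm-steepness} furnishes a constant $c_\star > 0$ independent of $\tau,\zeta,\de$ such that
\begin{equation*}
u(\tau, x-\zeta-\de) - u(\tau,x-\zeta) \geq c_\star\min\{1,\de\}
\end{equation*}
at the point $x = X(\tau)+\zeta+\tfrac{1}{2}\min\{1,\de\}$ (indeed on a window thereof, treating $\de\le 1$ and $\de>1$ separately). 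Hence at some $x_\star$ in that window, either $w_-(\tau,x_\star) \geq \tfrac{c_\star}{2}\min\{1,\de\}$ or $w_+(\tau,x_\star) \geq \tfrac{c_\star}{2}\min\{1,\de\}$; I focus on the first case, the second being symmetric.

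Step (ii) is the principal obstacle. First, $w_-(\tau,\cdot)$ is uniformly Lipschitz: $u(\tau,\cdot;t_0)$ inherits a Lipschitz bound from \eqref{initial-condition-key-lem} by comparison with the transition front, $u(\tau,\cdot)$ is Lipschitz by Theorem \ref{thm-regularity}, and $\Ga$ is smooth. So the pointwise slack persists on a fixed-length subinterval $I_\star$ around $x_\star$. Running the equation on $[\tau,\tau+\si_0]$ and iterating the nonlocal convolution as in the proof of Lemma \ref{lem-tech-1234567} upgrades this interval slack into a uniform slack of size $\eta\min\{1,\de\}$ on an interface window $[X(\tau+\si_0)+\zeta-M_\star, X(\tau+\si_0)+\zeta+M_\star]$, for some $\si_0, M_\star, \eta>0$ depending only on $\al$. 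I then trade this uniform slack for a horizontal shift: set $\zeta' = \zeta + \tilde{\ep}\min\{1,\de\}$ with $\tilde{\ep}$ so small that $\tilde{C}\tilde{\ep}\min\{1,\de\} \leq \tfrac{\eta}{2}\min\{1,\de\}$ on the interface window, while outside the window the extra shift cost can be absorbed into a new $\Ga$-error of total size $\tfrac{\ep}{2}+\tilde{C}\tilde{\ep}\min\{1,\de\}$, using $\Ga\equiv 1$ on the left tail and matching the exponential tail of $\Ga$ on the right against the exponential decay of $u_x$ from Lemma \ref{lem-decaying-space-derivative}.

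For step (iii), apply Theorem \ref{thm-stability}$\rm(1)$ on $[\tau+\si_0,\tau+\si]$ starting from the improved sandwich obtained in step (ii), while on $[\tau,\tau+\si_0]$ the unsqueezed upper bound was already evolved by Theorem \ref{thm-stability}$\rm(1)$. Each of the two applications contributes at most $\tfrac{A\ep}{\om}$ to the shift on either side, giving total budgets $\tfrac{2A\ep}{\om}$ per side and an overall width change of at most $-\tilde{\ep}\min\{1,\de\}+\tfrac{4A\ep}{\om}$, with errors decaying as $e^{-\om(t-\tau-\si)}$; choosing $\si\gg\si_0$ ensures $\ep e^{-\om\si_0}\le\tfrac{\ep}{2}$. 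The hard part is step (ii): because no parabolic smoothing is available, spreading a single-point excess into a uniform excess on the interface window must be executed by iterated convolution with $J$ (exploiting that $J$ has an interval of positivity, in the spirit of Lemma \ref{lem-tech-1234567}), and the horizontal shift of $\zeta$ must then be reconciled with the exponential tail of $\Ga$ via Lemma \ref{lem-decaying-space-derivative}.
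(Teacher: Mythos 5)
Your proposal is correct and follows essentially the same route as the paper's proof: apply Theorem \ref{thm-stability}(1) to the initial sandwich, use a dichotomy at time $\tau$ to find a slack of size $\sim\min\{1,\de\}$ on one side, propagate that slack forward by iterated convolution in the spirit of Lemma \ref{lem-tech-1234567}, trade it for a horizontal shift $\tilde\ep\min\{1,\de\}$ near the interface while using Lemma \ref{lem-decaying-space-derivative} to control the cost of the shift on the two tails (absorbed into a $\Ga$-error), and then apply Theorem \ref{thm-stability}(1) once more. The only presentational difference is in how the slack is first extracted: you take a pointwise excess from Theorem \ref{thm-steepness} and widen it to an interval via Lipschitz continuity, whereas the paper directly derives an integral slack
$\int_{X(\tau)-1/2}^{X(\tau)+1/2}[u(\tau,y-\tilde\de)-u(\tau,y)]\,dy\ge -2\tilde C_{\rm steep}\tilde\de$
by Taylor expansion and steepness, then splits; these are interchangeable. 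Your use of a two-stage time scale $\si_0\ll\si$ is a mild rearrangement of the paper's single $\si$ with $e^{-\om\si}\le 1/2$ (and the line ``$\ep e^{-\om\si_0}\le\ep/2$'' should read with $\si$ in place of $\si_0$), but the budget accounting -- two applications of Theorem \ref{thm-stability}(1), each costing at most $A\ep/\om$ in shift per side, producing $2A\ep/\om$ per side and width change $-\tilde\ep\min\{1,\de\}+4A\ep/\om$ -- matches the paper exactly.
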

\begin{proof}
Applying Theorem \ref{thm-stability}$\rm(1)$ to \eqref{initial-condition-key-lem}, we find
\begin{equation}\label{known-estimate}
\begin{split}
&u(t,x-\zeta_{\tau}^{-}(t))-q_{\tau}(t)\Ga(x-\zeta_{\tau}^{-}(t)-X(t))\\
&\quad\quad\leq u(t,x;t_{0})\leq u(t,x-\zeta_{\tau}^{+}(t)-\de)+q_{\tau}(t)\Ga(x-\zeta_{\tau}^{+}(t)-\de-X(t))
\end{split}
\end{equation}
for all $t\geq \tau$, where $\zeta_{\tau}^{\pm}(t)=\zeta\pm\frac{A\ep}{\om}(1-e^{-\om(t-\tau)})$ and $q_{\tau}(t)=\ep e^{-\om(t-\tau)}$.

We modify \eqref{known-estimate} at the moment $t=\tau+\si$ for some $\si>0$ to be chosen to obtain a new estimate for $u(\tau+\si,x;t_{0})$, and then apply Theorem \ref{thm-stability}(1) to this new estimate to conclude the result of the lemma. To this end, we set
\begin{equation*}
\tilde{\de}=\min\{\de,1\}
\quad\text{and}\quad \tilde{C}_{\rm steep}=\frac{1}{2}\sup\big\{u_{x}(t,x)\big||x-X(t)|\leq 2,\,\, t\geq t_{0}\big\}<0.
\end{equation*}
Then, for $t\geq t_{0}$, we deduce from Taylor expansion that
\begin{equation*}
\int_{X(t)-\frac{1}{2}}^{X(t)+\frac{1}{2}}\big[u(t,y-\tilde{\de})-u(t,y)\big]dy\geq-2\tilde{C}_{\rm steep}\tilde{\de}.
\end{equation*}
In particular, at the moment $t=\tau$, either
\begin{equation}\label{either-or-1}
\int_{X(\tau)-\frac{1}{2}}^{X(\tau)+\frac{1}{2}}\big[u(\tau,y-\tilde{\de})-u(\tau,y+\zeta;t_{0})\big]dy\geq-\tilde{C}_{\rm steep}\tilde{\de}
\end{equation}
or
\begin{equation}\label{either-or-2}
\int_{X(\tau)-\frac{1}{2}}^{X(\tau)+\frac{1}{2}}\big[u(\tau,y+\zeta;t_{0})-u(\tau,y)\big]dy\geq-\tilde{C}_{\rm steep}\tilde{\de}
\end{equation}
must be the case.

We first consider the problem when \eqref{either-or-2} holds. We are about to establish an appropriate lower bound for
\begin{equation*}
u(\tau+\si,x;t_{0})-u(\tau+\si,x-\zeta_{\tau}^{-}(\tau+\si)-\tilde{\ep}\tilde{\de}),
\end{equation*}
where $\tilde{\ep}>0$ and $\si>0$ are to be chosen. To do so, let $M>0$ be a large number to be chosen, and consider three cases: $\rm(i)$ $x-\zeta-X(\tau)\in[-M,M]$; $\rm(ii)$ $x-\zeta-X(\tau)\leq-M$; $\rm(iii)$ $x-\zeta-X(\tau)\geq-M$.

\textbf{Case $\rm(i)$. $x-\zeta-X(\tau)\in[-M,M]$.} We write
\begin{equation}\label{left-region}
\begin{split}
&u(\tau+\si,x;t_{0})-u(\tau+\si,x-\zeta_{\tau}^{-}(\tau+\si)-\tilde{\ep}\tilde{\de})\\
&\quad\quad=\big[u(\tau+\si,x;t_{0})-u(\tau+\si,x-\zeta_{\tau}^{-}(\tau+\si))\big]\\
&\quad\quad\quad+\big[u(\tau+\si,x-\zeta_{\tau}^{-}(\tau+\si))-u(\tau+\si,x-\zeta_{\tau}^{-}(\tau+\si)-\tilde{\ep}\tilde{\de})\big].
\end{split}
\end{equation}
For the first difference on the right hand side of \eqref{left-region}, we argue
\begin{equation*}
\begin{split}
&u(\tau+\si,x;t_{0})-u(\tau+\si,x-\zeta_{\tau}^{-}(\tau+\si))+q_{\tau}(\tau+\si)\Ga(x-\zeta_{\tau}^{-}(\tau+\si)-X(\tau+\si))\\
&\quad\quad=u(\tau+\si,x;t_{0})\\
&\quad\quad\quad-\big[u(\tau+\si,x-\zeta+\frac{A\ep}{\om}(1-e^{-\om\si}))-q_{\tau}(\tau+\si)\Ga(x-\zeta+\frac{A\ep}{\om}(1-e^{-\om\si})-X(\tau+\si))\big]\\
&\quad\quad=u(\tau+\si,y+\zeta;t_{0})\\
&\quad\quad\quad-\big[u(\tau+\si,y+\frac{A\ep}{\om}(1-e^{-\om\si}))-q_{\tau}(\tau+\si)\Ga(y+\frac{A\ep}{\om}(1-e^{-\om\si})-X(\tau+\si))\big]\\
&\quad\quad\quad\quad\quad\quad\quad\quad\quad\quad\quad\quad\quad\quad\quad\quad\quad\quad\quad\quad\quad\quad\quad\quad(\text{by}\,\,y=x-\zeta\in X(\tau)+[-M,M])\\
&\quad\quad=u(\tau+\si,y+\zeta;t_{0})-\tilde{u}(\tau+\si,y)\\
&\quad\quad(\text{where}\,\,\tilde{u}(t,y)=u(t,y+\frac{A\ep}{\om}(1-e^{-\om(t-\tau)}))-q_{\tau}(t)\Ga(y+\frac{A\ep}{\om}(1-e^{-\om(t-\tau)})-X(t)))\\
&\quad\quad\geq C(\si,M)\int_{X(\tau)-\frac{1}{2}}^{X(\tau)+\frac{1}{2}}\big[u(\tau,y+\zeta;t_{0})-\tilde{u}(\tau,y)\big]dy\\
&\quad\quad\geq C(\si,M)\int_{X(\tau)-\frac{1}{2}}^{X(\tau)+\frac{1}{2}}\big[u(\tau,y+\zeta;t_{0})-u(\tau,y)\big]dy\quad(\text{by}\,\,\tilde{u}(\tau,y)\leq u(\tau,y))\\
&\quad\quad\geq-C(\si,M)\tilde{C}_{\rm steep}\tilde{\de}\quad (\text{by \eqref{either-or-2}}),
\end{split}
\end{equation*}
where the first inequality follows as in the proof of Lemma \ref{lem-tech-1234567}. In fact, we know $u(t,y+\zeta;t_{0})$ is a solution of $v_{t}=J\ast v-v+f(t,v)$, while $\tilde{u}(t,y)$ is a subsolution by the proof of Theorem \ref{thm-stability}. Moreover, $u(t,y+\zeta;t_{0})\geq\tilde{u}(t,y)$ by \eqref{known-estimate}. Based on these information, we can repeat the arguments in the proof of Lemma \ref{lem-tech-1234567} to conclude the inequality. Here, $C(t-\tau,M)>0$ satisfies $C(t-\tau,M)\to0$ polynomially as $t-\tau\to0$ and exponentially as $t-\tau\to\infty$. Thus, we have shown
\begin{equation}\label{left-region-1}
\begin{split}
&u(\tau+\si,x;t_{0})-u(\tau+\si,x-\zeta_{\tau}^{-}(\tau+\si))\\
&\quad\quad\geq-C(\si,M)\tilde{C}_{\rm steep}\tilde{\de}-q_{\tau}(\tau+\si)\Ga(x-\zeta_{\tau}^{-}(\tau+\si)-X(\tau+\si)).
\end{split}
\end{equation}

For the second difference on the right hand side of \eqref{left-region}, Taylor expansion gives
\begin{equation*}
u(\tau+\si,x-\zeta_{\tau}^{-}(\tau+\si))-u(\tau+\si,x-\zeta_{\tau}^{-}(\tau+\si)-\tilde{\ep}\tilde{\de})=u_{x}(\tau+\si,x-\zeta+\frac{A\ep}{\om}(1-e^{-\om\si})-x_{*})\tilde{\ep}\tilde{\de},
\end{equation*}
where $x_{*}\in[0,\tilde{\ep}\tilde{\de}]\subset[0,1]$. Setting
\begin{equation}\label{aux-ep}
\tilde{\ep}=\tilde{\ep}(\si,M):=\min\bigg\{1,\frac{-\tilde{C}_{\rm steep}C(\si,M)}{\sup_{(t,x)\in\R\times\R}|u_{x}(t,x)|}\bigg\}>0,
\end{equation}
we deduce
\begin{equation}\label{left-region-2}
u(\tau+\si,x-\zeta_{\tau}^{-}(\tau+\si))-u(\tau+\si,x-\zeta_{\tau}^{-}(\tau+\si)-\tilde{\ep}\tilde{\de})\geq C(\si,M)\tilde{C}_{\rm steep}\tilde{\de}.
\end{equation}
It then follows from \eqref{left-region}, \eqref{left-region-1} and \eqref{left-region-2} that
\begin{equation}\label{left-region-12345}
u(\tau+\si,x;t_{0})-u(\tau+\si,x-\zeta_{\tau}^{-}(\tau+\si)-\tilde{\ep}\tilde{\de})\geq-q_{\tau}(\tau+\si)\Ga(x-\zeta_{\tau}^{-}(\tau+\si)-X(\tau+\si)).
\end{equation}

\textbf{Case $\rm(ii)$. $x-\zeta-X(\tau)\leq-M$.} We write
\begin{equation}\label{middle-region}
\begin{split}
&u(\tau+\si,x;t_{0})-u(\tau+\si,x-\zeta_{\tau}^{-}(\tau+\si)-\tilde{\ep}\tilde{\de})\\
&\quad\quad=\big[u(\tau+\si,x;t_{0})-u(\tau+\si,x-\zeta_{\tau}^{-}(\tau+\si))\big]\\
&\quad\quad\quad+\big[u(\tau+\si,x-\zeta_{\tau}^{-}(\tau+\si))-u(\tau+\si,x-\zeta_{\tau}^{-}(\tau+\si)-\tilde{\ep}\tilde{\de})\big]\\
&\quad\quad\geq-q_{\tau}(\tau+\si)\Ga(x-\zeta_{\tau}^{-}(\tau+\si)-X(\tau+\si))\\
&\quad\quad\quad+\big[u(\tau+\si,x-\zeta_{\tau}^{-}(\tau+\si))-u(\tau+\si,x-\zeta_{\tau}^{-}(\tau+\si)-\tilde{\ep}\tilde{\de})\big],
\end{split}
\end{equation}
where we used the first inequality in \eqref{known-estimate}. For the term in the bracket, we first choose $M=M(\al)$ such that $-M+\frac{A\ep}{\om}\leq-\tilde{h}_{-}$, where $\tilde{h}_{-}$ is as in Lemma \ref{lem-decaying-space-derivative}. Then, we have
\begin{equation*}
\begin{split}
x-\zeta_{\tau}^{-}(\tau+\si)-X(\tau+\si)&\leq x-\zeta_{\tau}^{-}(\tau+\si)-X(\tau)\\
&=x-\zeta-X(\tau)+\frac{A\ep}{\om}(1-e^{-\om\si})\\
&\leq-M+\frac{A\ep}{\om}\leq-\tilde{h}_{-}.
\end{split}
\end{equation*}
It then follows from Lemma \ref{lem-decaying-space-derivative} that
\begin{equation*}
\begin{split}
&u(\tau+\si,x-\zeta_{\tau}^{-}(\tau+\si))-u(\tau+\si,x-\zeta_{\tau}^{-}(\tau+\si)-\tilde{\ep}\tilde{\de})\\
&\quad\quad=u_{x}(\tau+\si,x-\zeta_{\tau}^{-}(\tau+\si)-x_{*})\tilde{\ep}\tilde{\de}\quad(\text{where}\,\,x_{*}\in[0,\tilde{\ep}\tilde{\de}]\subset[0,1])\\
&\quad\quad\geq-\tilde{C}_{-}e^{\tilde{c}_{-}(x-\zeta_{\tau}^{-}(\tau+\si)-x_{*}-X(\tau+\si)+\tilde{h}_{-})}\tilde{\ep}\tilde{\de}\\
&\quad\quad=-\tilde{C}_{-}e^{\tilde{c}_{-}(x-\zeta_{\tau}^{-}(\tau+\si)-x_{*}-X(\tau)+\tilde{h}_{-})}e^{-\tilde{c}_{-}(X(\tau+\si)-X(\tau))}\tilde{\ep}\tilde{\de}\\
&\quad\quad\geq-\tilde{C}_{-}e^{-\tilde{c}_{-}(X(\tau+\si)-X(\tau))}\tilde{\ep}\tilde{\de}\\
&\quad\quad\geq-\tilde{C}_{-}e^{-\tilde{c}_{-}c_{\min}\si}\tilde{\ep}\tilde{\de}.
\end{split}
\end{equation*}
Going back to \eqref{middle-region}, we find
\begin{equation}\label{middle-region-12345}
\begin{split}
&u(\tau+\si,x;t_{0})-u(\tau+\si,x-\zeta_{\tau}^{-}(\tau+\si)-\tilde{\ep}\tilde{\de})\\
&\quad\quad\geq-q_{\tau}(\tau+\si)\Ga(x-\zeta_{\tau}^{-}(\tau+\si)-X(\tau+\si))-\tilde{C}_{-}e^{-\tilde{c}_{-}c_{\min}\si}\tilde{\ep}\tilde{\de}\\
&\quad\quad=-[q_{\tau}(\tau+\si)+\tilde{C}_{-}e^{-\tilde{c}_{-}c_{\min}\si}\tilde{\ep}\tilde{\de}\big]\Ga(x-\zeta_{\tau}^{-}(\tau+\si)-X(\tau+\si))\\
&\quad\quad\geq-[q_{\tau}(\tau+\si)+\tilde{C}_{-}\tilde{\ep}\tilde{\de}\big]\Ga(x-\zeta_{\tau}^{-}(\tau+\si)-X(\tau+\si))
\end{split}
\end{equation}
if we choose $M$ large so that $-M+\frac{A\ep}{\om}\leq-M_{1}-1$, and hence, $\Ga(x-\zeta_{\tau}^{-}(\tau+\si)-X(\tau+\si))=1$.

\textbf{Case $\rm(iii)$. $x-\zeta-X(\tau)\geq M$.} Choosing $M=M(\al,\si)$ larger, say $M-c_{\max}\si\geq\max\{M_{1}+1,\tilde{h}_{+}+1\}$, we have
\begin{equation*}
\begin{split}
x-\zeta_{\tau}^{-}(\tau+\si)-X(\tau+\si)&=x-\zeta-X(\tau)+\frac{A\ep}{\om}(1-e^{-\om\si})-(X(\tau+\si)-X(\tau))\\
&\geq M-c_{\max}\si\geq\max\{M_{1}+1,\tilde{h}_{+}+1\}.
\end{split}
\end{equation*}
As a result,
\begin{equation*}
\Ga(x-\zeta_{\tau}^{-}(\tau+\si)-X(\tau+\si))=e^{-\al(x-\zeta_{\tau}^{-}(\tau+\si)-X(\tau+\si)-M_{1})}
\end{equation*}
and by Lemma \ref{lem-decaying-space-derivative}
\begin{equation*}
u_{x}(\tau+\si,x-\zeta_{\tau}^{-}(\tau+\si)-x_{*})\geq\tilde{C}_{+}e^{-\tilde{c}_{+}(x-\zeta_{\tau}^{-}(\tau+\si)-x_{*}-X(\tau+\si)-\tilde{h}_{+})},\quad \forall x_{*}\in[0,1].
\end{equation*}
Together with the first inequality in \eqref{known-estimate}, we deduce
\begin{equation*}
\begin{split}
&u(\tau+\si,x;t_{0})-u(\tau+\si,x-\zeta_{\tau}^{-}(\tau+\si)-\tilde{\ep}\tilde{\de})\\
&\quad\quad\geq u(\tau+\si,x-\zeta_{\tau}^{-}(\tau+\si))-u(\tau+\si,x-\zeta_{\tau}^{-}(\tau+\si)-\tilde{\ep}\tilde{\de})\\
&\quad\quad-q_{\tau}(\tau+\si)\Ga(x-\zeta_{\tau}^{-}(\tau+\si)-X(\tau+\si))\\
&\quad\quad=u_{x}(\tau+\si,x-\zeta_{\tau}^{-}(\tau+\si)-x_{*})\tilde{\ep}\tilde{\de}-q_{\tau}(\tau+\si)e^{-\al(x-\zeta_{\tau}^{-}(\tau+\si)-X(\tau+\si)-M_{1})}\\
&\quad\quad\geq-\tilde{C}_{+}e^{-\tilde{c}_{+}(x-\zeta_{\tau}^{-}(\tau+\si)-x_{*}-X(\tau+\si)-\tilde{h}_{+})}\tilde{\ep}\tilde{\de}-q_{\tau}(\tau+\si)e^{-\al(x-\zeta_{\tau}^{-}(\tau+\si)-X(\tau+\si)-M_{1})}\\
&\quad\quad\geq-[\tilde{C}_{+}\tilde{\ep}\tilde{\de}+q_{\tau}(\tau+\si)]e^{-\al(x-\zeta_{\tau}^{-}(\tau+\si)-X(\tau+\si)-M_{1})},
\end{split}
\end{equation*}
if we choosing $\al$ smaller so that $\al\leq\tilde{c}_{+}$. Hence,
\begin{equation}\label{right-region-12345}
\begin{split}
&u(\tau+\si,x;t_{0})-u(\tau+\si,x-\zeta_{\tau}^{-}(\tau+\si)-\tilde{\ep}\tilde{\de})\\
&\quad\quad\geq-[\tilde{C}_{+}\tilde{\ep}\tilde{\de}+q_{\tau}(\tau+\si)]\Ga(x-\zeta_{\tau}^{-}(\tau+\si)-X(\tau+\si)).
\end{split}
\end{equation}

Thus, combining \eqref{left-region-12345}, \eqref{middle-region-12345}, \eqref{right-region-12345} and the second inequality in \eqref{known-estimate}, we find
\begin{equation}\label{lmr-regions-1}
\begin{split}
&u(\tau+\si,x-\zeta_{\tau}^{-}(\tau+\si)-\tilde{\ep}\tilde{\de})-\bar{q}_{\tau}(\tau+\si,\tilde{\ep})\Ga(x-\zeta_{\tau}^{-}(\tau+\si)-X(\tau+\si))\\
&\quad\quad\leq u(\tau+\si,x;t_{0})\\
&\quad\quad\quad\leq u(\tau+\si,x-\zeta_{\tau}^{+}(\tau+\si)-\de)+q_{\tau}(\tau+\si)\Ga(x-\zeta_{\tau}^{+}(\tau+\si)-\de-X(\tau+\si)),
\end{split}
\end{equation}
where
\begin{equation*}
\bar{q}_{\tau}(\tau+\si,\tilde{\ep})=
\begin{cases}
q_{\tau}(\tau+\si),\quad&x-\zeta-X(\tau)\in[-M,M],\\
q_{\tau}(\tau+\si)+\tilde{C}_{-}\tilde{\ep}\tilde{\de},\quad&x-\zeta-X(\tau)\leq-M,\\
\tilde{C}_{+}\tilde{\ep}\tilde{\de}+q_{\tau}(\tau+\si),\quad&x-\zeta-X(\tau)\geq M.
\end{cases}
\end{equation*}
Observe that the first $\Ga$ in \eqref{lmr-regions-1} is not in its right form, but from the monotonicity, we see
\begin{equation*}
\Ga(x-\zeta_{\tau}^{-}(\tau+\si)-X(\tau+\si))\leq\Ga(x-\zeta_{\tau}^{-}(\tau+\si)-\tilde{\ep}\tilde{\de}-X(\tau+\si)).
\end{equation*}
Since clearly $q_{\tau}(\tau+\si)\leq\bar{q}_{\tau}(\tau+\si,\tilde{\ep})$, we conclude from \eqref{lmr-regions-1} that
\begin{equation}\label{lmr-regions-2}
\begin{split}
&u(\tau+\si,x-\zeta_{\tau}^{-}(\tau+\si)-\tilde{\ep}\tilde{\de})-\bar{q}(\si)\Ga(x-\zeta_{\tau}^{-}(\tau+\si)-\tilde{\ep}\tilde{\de}-X(\tau+\si))\\
&\quad\quad\leq u(\tau+\si,x;t_{0})\\
&\quad\quad\quad\leq u(\tau+\si,x-\zeta_{\tau}^{+}(\tau+\si)-\de)+\bar{q}(\si)\Ga(x-\zeta_{\tau}^{+}(\tau+\si)-\de-X(\tau+\si)),
\end{split}
\end{equation}
where $\bar{q}(\si,\tilde{\ep})=\bar{q}_{\tau}(\tau+\si,\tilde{\ep})$ is independent of $\tau$. To apply Theorem \ref{thm-stability}$\rm(1)$, we choose $\si=\si(\al)$ sufficiently large and $\tilde{\ep}=\tilde{\ep}(\si,M,\ep_{0})=\tilde{\ep}(\al,\ep_{0})$ sufficiently small so that
\begin{equation*}
e^{-\om\si}\leq\frac{1}{2}\quad\text{and}\quad (\tilde{C}_{-}+\tilde{C}_{+})\tilde{\ep}\leq\frac{\ep_{0}}{2}.
\end{equation*}
Of course, for $\tilde{\ep}$, we should also take \eqref{aux-ep} into consideration. As a result $\bar{q}(\si,\tilde{\ep})\leq\ep_{0}$. We then apply Theorem  \ref{thm-stability}(1) to \eqref{lmr-regions-2} to conclude that
\begin{equation}\label{lmr-regions-3}
\begin{split}
&u(t,x-\zeta^{-}(t))-\bar{q}(\si,\tilde{\ep})e^{-\om(t-\tau-\si)}\Ga(x-\zeta^{-}(t)-X(t))\\
&\quad\quad\leq u(t,x;t_{0})\leq u(t,x-\zeta^{+}(t))+\bar{q}(\si,\tilde{\ep}) e^{-\om(t-\tau-\si)}\Ga(x-\zeta^{+}(t)-X(t))
\end{split}
\end{equation}
for $t\geq \tau+\si$, where
\begin{equation*}
\begin{split}
\zeta^{-}(t)&=\zeta_{\tau}^{-}(\tau+\si)+\tilde{\ep}\tilde{\de}-\frac{A\ep}{\om}(1-e^{-\om(t-\tau-\si)})=\zeta-\frac{2A\ep}{\om}+\tilde{\ep}\tilde{\de}+\frac{A\ep}{\om}[e^{-\om\si}+e^{-\om(t-\tau-\si)}],\\
\zeta^{+}(t)&=\zeta_{\tau}^{+}(\tau+\si)+\de+\frac{A\ep}{\om}(1-e^{-\om(t-\tau-\si)})=\zeta+\frac{2A\ep}{\om}+\de-\frac{A\ep}{\om}[e^{-\om\si}+e^{-\om(t-\tau-\si)}].
\end{split}
\end{equation*}
Let $\tilde{C}=\tilde{C}_{-}+\tilde{C}_{+}$. Setting
\begin{equation*}
q(t)=(\frac{\ep}{2}+\tilde{C}\tilde{\ep}\tilde{\de})e^{-\om(t-\tau-\si)},\quad\zeta(t)=\zeta^{-}(t)\quad\text{and}\quad \de(t)=\de-\tilde{\ep}\tilde{\de}+\frac{4A\ep}{\om}-\frac{2A\ep}{\om}[e^{-\om\si}+e^{-\om(t-\tau-\si)}],
\end{equation*}
we can rewrite \eqref{lmr-regions-3} as
\begin{equation}\label{lmr-regions-4}
\begin{split}
&u(t,x-\zeta(t))-q(t)\Ga(x-\zeta(t)-X(t))\\
&\quad\quad\leq u(t,x;t_{0})\leq u(t,x-\zeta(t)-\de(t))-q(t)\Ga(x-\zeta(t)-\de(t)-X(t))
\end{split}
\end{equation}
for $t\geq\tau+\si$.

The estimate \eqref{lmr-regions-4} is established under the assumption \eqref{either-or-2}.  If \eqref{either-or-1} holds, then similar arguments lead also to \eqref{lmr-regions-4} with $q(t)$ and $\de(t)$ of the same form and
\begin{equation*}
\begin{split}
\zeta(t)&=\zeta-\frac{2A\ep}{\om}+\frac{A\ep}{\om}[e^{-\om\si}+e^{-\om(t-\tau-\si)}].
\end{split}
\end{equation*}
We just remark that the choice of $\si$ in this case is still independent of $\de$, which follows from the observation that replacing $\de$ by $\tilde{\de}$ at appropriate steps when estimating the lower bound for the term
\begin{equation*}
u(\tau+\si,x-\zeta_{\tau}^{+}(\tau+\si)-\de+\tilde{\ep}\tilde{\de})-u(\tau+\si,x;t_{0}).
\end{equation*}
The lemma then follows.
\end{proof}

Now, we prove Theorem \ref{thm-asymptotic-stability}.

\begin{proof}[Proof of Theorem \ref{thm-asymptotic-stability}]
By Theorem \ref{thm-stability}$\rm(2)$, we have
\begin{equation*}
u(t,x-\zeta^{-})-\ep e^{-\om(t-t_{0})}\Ga(x-\zeta^{-}-X(t))\leq u(t,x;t_{0},u_{0})\leq u(t,x-\zeta^{+})+\ep e^{-\om(t-t_{0})}\Ga(x-\zeta^{+}-X(t))
\end{equation*}
for all $t\geq t_{0}$. In particular,
\begin{equation}\label{step-initial}
\begin{split}
&u(t_{0}+T_{0},x-\zeta_{0})-q_{0}\Ga(x-\zeta_{0}-X(t_{0}+T_{0}))\\
&\quad\quad\leq u(t_{0}+T_{0},x;t_{0},u_{0})\leq u(t,x-\zeta_{0}-\de_{0})+q_{0}\Ga(x-\zeta_{0}-\de_{0}-X(t_{0}+T_{0})),
\end{split}
\end{equation}
where $\zeta_{0}=\zeta^{-}$, $\de_{0}=\zeta^{+}-\zeta^{-}$, $q_{0}=\ep e^{-\om T_{0}}$ and $T_{0}>0$ is chosen so that
\begin{equation}\label{a-condition-12345678}
\frac{4A}{\om}\ep_{0}e^{-\om T_{0}}\leq\frac{\tilde{\ep}}{2}.
\end{equation}
Here, we may assume, without loss of generality, that $\de_{0}>1$. We now use iteration arguments to reduce $\de_{0}$.

Let $T\geq T_{0}$. Applying Lemma \ref{lem-key-asymptotic-stability} to \eqref{step-initial}, we find at the moment $t_{0}+T_{0}+\si+T$,
\begin{equation}\label{step-1}
\begin{split}
&u(t_{0}+T_{0}+\si+T,x-\zeta_{1})-q_{1}\Ga(x-\zeta_{1}-X(t_{0}+T_{0}+\si+T))\\
&\quad\quad\leq u(t_{0}+T_{0}+\si+T,x;t_{0},u_{0})\\
&\quad\quad\quad\leq u(t_{0}+T_{0}+\si+T,x-\zeta_{1}-\de_{1})-q_{1}\Ga(x-\zeta_{1}-\de_{1}-X(t_{0}+T_{0}+\si+T)),
\end{split}
\end{equation}
where
\begin{equation*}
\begin{split}
\zeta_{1}&\in[\zeta_{0}-\frac{2Aq_{0}}{\om},\zeta_{0}+\tilde{\ep}\min\{1,\de_{0}\}]\subset[\zeta_{0}-\frac{2Aq_{0}}{\om},\zeta_{0}+\tilde{\ep}\}],\\
0\leq\de_{1}&\leq\de_{0}-\tilde{\ep}\min\{1,\de_{0}\}+\frac{4Aq_{0}}{\om}=\de_{0}-\tilde{\ep}+\frac{4Aq_{0}}{\om}\leq \de_{0}-\frac{\tilde{\ep}}{2},\\
0\leq q_{1}&\leq (\frac{q_{0}}{2}+\tilde{C}\tilde{\ep}\min\{1,\de_{0}\})e^{-\om T}\leq\ep_{0}e^{-\om T}.
\end{split}
\end{equation*}
If $\de_{1}\leq1$, we stop. Otherwise, applying Lemma \ref{lem-key-asymptotic-stability} to \eqref{step-1}, we find at the moment $t_{0}+T_{0}+2(\si+T)$,
\begin{equation}\label{step-2}
\begin{split}
&u(t_{0}+T_{0}+2(\si+T),x-\zeta_{2})-q_{2}\Ga(x-\zeta_{2}-X(t_{0}+T_{0}+2(\si+T)))\\
&\quad\quad\leq u(t_{0}+T_{0}+2(\si+T),x;t_{0},u_{0})\\
&\quad\quad\quad\leq u(t_{0}+T_{0}+2(\si+T),x-\zeta_{2}-\de_{2})-q_{2}\Ga(x-\zeta_{2}-\de_{2}-X(t_{0}+T_{0}+2(\si+T))),
\end{split}
\end{equation}
where
\begin{equation*}
\begin{split}
\zeta_{2}&\in[\zeta_{1}-\frac{2Aq_{1}}{\om},\zeta_{1}+\tilde{\ep}\min\{1,\de_{1}\}]\subset[\zeta_{1}-\frac{2Aq_{1}}{\om},\zeta_{1}+\tilde{\ep}\}],\\
0\leq\de_{2}&\leq\de_{1}-\tilde{\ep}\min\{1,\de_{1}\}+\frac{4Aq_{1}}{\om}\leq\de_{0}-2\tilde{\ep}+2\frac{4Aq_{1}}{\om}\leq\de_{0}-2\frac{\tilde{\ep}}{2},\\
0\leq q_{2}&\leq(\frac{q_{1}}{2}+\tilde{C}\tilde{\ep}\min\{1,\de_{1}\})e^{-\om T}\leq\ep_{0}e^{-\om T}.
\end{split}
\end{equation*}
If $\de_{2}\leq 1$, we stop. Otherwise, applying Lemma \ref{lem-key-asymptotic-stability} to \eqref{step-2}. Repeating this, if $\de_{1},\de_{2},\dots\de_{N-1}$ are all greater than one $1$, then we will have
\begin{equation}\label{step-N}
\begin{split}
&u(t_{0}+T_{0}+N(\si+T),x-\zeta_{N})-q_{N}\Ga(x-\zeta_{N}-X(t_{0}+T_{0}+N(\si+T)))\\
&\quad\quad\leq u(t_{0}+T_{0}+N(\si+T),x;t_{0},u_{0})\\
&\quad\quad\quad\leq u(t_{0}+T_{0}+N(\si+T),x-\zeta_{N}-\de_{N})-q_{N}\Ga(x-\zeta_{N}-\de_{N}-X(t_{0}+T_{0}+N(\si+T))),
\end{split}
\end{equation}
where
\begin{equation*}
\begin{split}
\zeta_{N}&\in[\zeta_{N-1}-\frac{2Aq_{N-1}}{\om},\zeta_{N-1}+\tilde{\ep}\min\{1,\de_{N-1}\}]\subset[\zeta_{N-1}-\frac{2Aq_{N-1}}{\om},\zeta_{N-1}+\tilde{\ep}\}],\\
0\leq\de_{N}&\leq\de_{N-1}-\tilde{\ep}\min\{1,\de_{N-1}\}+\frac{4Aq_{N-1}}{\om}\leq\de_{0}-N\frac{\tilde{\ep}}{2},\\
0\leq q_{N}&\leq(\frac{q_{N-1}}{2}+\tilde{C}\tilde{\ep}\min\{1,\de_{N-1}\})e^{-\om T}\leq\ep_{0}e^{-\om T}.
\end{split}
\end{equation*}
Observe that there must be an $N$ such that $\de_{N}\leq\de_{0}-N\frac{\tilde{\ep}}{2}\leq1$. We then stop here. Setting $\tilde{T}_{0}=T_{0}+N(\si+T)$, $\tilde{\zeta}_{0}=\zeta_{N}$, $\tilde{\de}_{0}=\de_{N}$ and $\tilde{q}_{0}=q_{N}$ in \eqref{step-N}, we have
\begin{equation}\label{iteration-restart}
\begin{split}
&u(t_{0}+\tilde{T}_{0},x-\tilde{\zeta}_{0})-\tilde{q}_{0}\Ga(x-\tilde{\zeta}_{0}-X(t_{0}+\tilde{T}_{0}))\\
&\quad\quad\leq u(t_{0}+\tilde{T}_{0},x;t_{0},u_{0})\\
&\quad\quad\quad\leq u(t_{0}+\tilde{T}_{0},x-\tilde{\zeta}_{0}-\tilde{\de}_{0})-\tilde{q}_{0}\Ga(x-\tilde{\zeta}_{0}-\tilde{\de}_{0}-X(t_{0}+\tilde{T}_{0})),
\end{split}
\end{equation}
where $\tilde{\de}_{0}\in[0,1]$.

We now apply the above iteration arguments to \eqref{iteration-restart}, as a new initial step, to conclude the result. Recall \eqref{a-condition-12345678}, $\tilde{q}_{0}\leq\ep_{0}e^{-\om T_{0}}$ and $\tilde{C}\tilde{\ep}\leq\frac{\ep_{0}}{2}$. We now choose $T$ so larger  that
\begin{equation*}
(\frac{\ep_{0}}{2}e^{-\om T_{0}}+\tilde{C}\tilde{\ep})e^{-\om T}\leq (1-\frac{\tilde{\ep}}{2})\ep_{0}e^{-\om T_{0}}.
\end{equation*}
Applying Lemma \ref{lem-key-asymptotic-stability} to \eqref{iteration-restart}, we find
\begin{equation}\label{iteration-restart-1}
\begin{split}
&u(t_{0}+\tilde{T}_{0}+\si+T,x-\tilde{\zeta}_{1})-\tilde{q}_{1}\Ga(x-\tilde{\zeta}_{1}-X(t_{0}+\tilde{T}_{0}+\si+T))\\
&\quad\quad\leq u(t_{0}+\tilde{T}_{0}+\si+T,x;t_{0},u_{0})\\
&\quad\quad\quad\leq u(t_{0}+\tilde{T}_{0}+\si+T,x-\tilde{\zeta}_{1}-\tilde{\de}_{1})-\tilde{q}_{1}\Ga(x-\tilde{\zeta}_{1}-\tilde{\de}_{1}-X(t_{0}+\tilde{T}_{0}+\si+T)),
\end{split}
\end{equation}
where
\begin{equation*}
\begin{split}
\tilde{\zeta}_{1}&\in[\tilde{\zeta}_{0}-\frac{2A\tilde{q}_{0}}{\om},\tilde{\zeta}_{0}+\tilde{\ep}\tilde{\de}_{0}],\\
0\leq\tilde{\de}_{1}&\leq\tilde{\de}_{0}-\tilde{\ep}\tilde{\de}_{0}+\frac{4A\tilde{q}_{0}}{\om}\leq1-\tilde{\ep}+\frac{\tilde{\ep}}{2}=1-\frac{\tilde{\ep}}{2},\\
0\leq \tilde{q}_{1}&\leq(\frac{\tilde{q}_{0}}{2}+\tilde{C}\tilde{\ep}\tilde{\de}_{0})e^{-\om T}\leq(\frac{\ep_{0}}{2}e^{-\om T}+\tilde{C}\tilde{\ep})e^{-\om T}\leq(1-\frac{\tilde{\ep}}{2})\ep_{0}e^{-\om T_{0}}.
\end{split}
\end{equation*}

Applying Lemma \ref{lem-key-asymptotic-stability} to \eqref{iteration-restart-1}, we find
\begin{equation*}
\begin{split}
&u(t_{0}+\tilde{T}_{0}+2(\si+T),x-\tilde{\zeta}_{2})-\tilde{q}_{2}\Ga(x-\tilde{\zeta}_{2}-X(t_{0}+\tilde{T}_{0}+2(\si+T)))\\
&\quad\quad\leq u(t_{0}+\tilde{T}_{0}+2(\si+T),x;t_{0},u_{0})\\
&\quad\quad\quad\leq u(t_{0}+\tilde{T}_{0}+2(\si+T),x-\tilde{\zeta}_{2}-\tilde{\de}_{2})-\tilde{q}_{2}\Ga(x-\tilde{\zeta}_{2}-\tilde{\de}_{2}-X(t_{0}+\tilde{T}_{0}+2(\si+T))),
\end{split}
\end{equation*}
where
\begin{equation*}
\begin{split}
\tilde{\zeta}_{2}&\in[\tilde{\zeta}_{1}-\frac{2A\tilde{q}_{1}}{\om},\tilde{\zeta}_{1}+\tilde{\ep}\tilde{\de}_{1}],\\
0\leq\tilde{\de}_{2}&\leq\tilde{\de}_{1}-\tilde{\ep}\tilde{\de}_{1}+\frac{4A\tilde{q}_{1}}{\om}\leq(1-\frac{\tilde{\ep}}{2})(1-\tilde{\ep}+\frac{4A}{\om}\ep_{0}e^{-\om T_{0}})\leq(1-\frac{\tilde{\ep}}{2})^{2},\\
0\leq \tilde{q}_{2}&\leq(\frac{\tilde{q}_{1}}{2}+\tilde{C}\tilde{\ep}\tilde{\de}_{1})e^{-\om T}\leq(1-\frac{\tilde{\ep}}{2})(\frac{\ep_{0}}{2}e^{-\om T}+\tilde{C}\tilde{\ep})e^{-\om T}\leq(1-\frac{\tilde{\ep}}{2})^{2}\ep_{0}e^{-\om T_{0}}.
\end{split}
\end{equation*}

Then, applying Lemma \ref{lem-key-asymptotic-stability} repeatedly, we find for $n\geq3$
\begin{equation*}
\begin{split}
&u(t_{0}+\tilde{T}_{0}+n(\si+T),x-\tilde{\zeta}_{n})-\tilde{q}_{n}\Ga(x-\tilde{\zeta}_{n}-X(t_{0}+\tilde{T}_{0}+n(\si+T)))\\
&\quad\quad\leq u(t_{0}+\tilde{T}_{0}+n(\si+T),x;t_{0},u_{0})\\
&\quad\quad\quad\leq u(t_{0}+\tilde{T}_{0}+n(\si+T),x-\tilde{\zeta}_{n}-\tilde{\de}_{n})-\tilde{q}_{n}\Ga(x-\tilde{\zeta}_{n}-\tilde{\de}_{n}-X(t_{0}+\tilde{T}_{0}+n(\si+T))),
\end{split}
\end{equation*}
where
\begin{equation*}
\begin{split}
\tilde{\zeta}_{n}&\in[\tilde{\zeta}_{n-1}-\frac{2A\tilde{q}_{n-1}}{\om},\tilde{\zeta}_{n-1}+\tilde{\ep}\tilde{\de}_{n-1}],\\
0\leq\tilde{\de}_{n}&\leq\tilde{\de}_{n-1}-\tilde{\ep}\tilde{\de}_{n-1}+\frac{4A\tilde{q}_{n-1}}{\om}\leq(1-\frac{\tilde{\ep}}{2})^{n},\\
0\leq \tilde{q}_{n}&\leq(\frac{\tilde{q}_{n-1}}{2}+\tilde{C}\tilde{\ep}\tilde{\de}_{n-1})e^{-\om T}\leq(1-\frac{\tilde{\ep}}{2})^{n}\ep_{0}e^{-\om T_{0}}.
\end{split}
\end{equation*}
This clearly implies that $\tilde{\zeta}_{n}\to\tilde{\zeta}_{\infty}$, $\tilde{\de}_{n}\to0$ and $\tilde{q}_{n}\to0$ exponentially as $n\to \infty$, where $\tilde{\zeta}_{\infty}\in\R$. The theorem then follows readily.
\end{proof}

Finally, as a simple consequence of Theorem \ref{thm-asymptotic-stability} and Corollary \ref{cor-stability-12345}, we have

\begin{cor}\label{cor-asymptotic-stability-12345}
Let $u(t,x)$ and $X(t)$ be as in Proposition\ref{prop-property-approximating-sol}$\rm(2)$. Let $\beta_{0}>0$. Suppose $t_{0}\in\R$ and $\tilde{u}_{0}\in C_{\rm unif}^{b}(\R,\R)$ satisfy
\begin{equation*}
\begin{cases}
\tilde{u}_{0}:\R\to[0,1],\quad \liminf_{x\to-\infty}\tilde{u}_{0}(x)>\theta;\\
\exists C>0\,\,\text{s.t.}\,\,|\tilde{u}_{0}-u(t_{0},x)|\leq Ce^{-\beta_{0}(x-X(t_{0}))}\,\,\text{for}\,\,x\in\R.
\end{cases}
\end{equation*}
Then, there exist $C=C(u_{0})>0$, $\zeta_{*}=\zeta_{*}(u_{0})\in\R$ and $r=r(\beta_{0})>0$ such that
\begin{equation*}
\sup_{x\in\R}|u(t,x;t_{0},u_{0})-u(t,x-\zeta_{*})|\leq Ce^{-r(t-t_{0})}
\end{equation*}
for all $t\geq t_{0}$.
\end{cor}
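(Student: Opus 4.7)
The plan is to reduce the corollary to Theorem \ref{thm-asymptotic-stability} by letting the solution $u(t,x;t_0,\tilde u_0)$ first evolve, over some finite time interval, into a configuration that the squeezing machinery behind Theorem \ref{thm-asymptotic-stability} can take as input. This is in the same spirit in which Corollary \ref{cor-stability-12345} extended Theorem \ref{thm-stability}$\rm(2)$ from data satisfying $u_0(-\infty)=1$ to data satisfying only $\liminf_{x\to-\infty}\tilde u_0(x)>\theta$.

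First, I would invoke Corollary \ref{cor-stability-12345}. Under the present hypotheses on $\tilde u_0$, it supplies constants $\om>0$ and $\tilde\ep_0>0$ such that for any $\ep\in(0,\tilde\ep_0]$ there exist shifts $\zeta_\pm=\zeta_\pm(\ep,\tilde u_0)$ and a time $t_1=t_1(\ep,\tilde u_0)\ge t_0$ with
\begin{equation*}
u(t,x-\zeta_-)-\ep e^{-\om(t-t_1)}\le u(t,x;t_0,\tilde u_0)\le u(t,x-\zeta_+)+\ep e^{-\om(t-t_1)},\quad t\ge t_1.
\end{equation*}
Inspecting the proof of Corollary \ref{cor-stability-12345}, its final step is an application of Theorem \ref{thm-stability}$\rm(1)$, whose conclusion is in the stronger $\Ga_\al$-weighted form. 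Hence the above can in fact be upgraded to
\begin{equation*}
u(t,x-\zeta_-)-\ep e^{-\om(t-t_1)}\Ga_\al(x-\zeta_--X(t))\le u(t,x;t_0,\tilde u_0)\le u(t,x-\zeta_+)+\ep e^{-\om(t-t_1)}\Ga_\al(x-\zeta_+-X(t))
\end{equation*}
for all $x\in\R$ and $t\ge t_1$.

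Next, I would choose $T_0>0$ satisfying $\frac{4A}{\om}\ep_0 e^{-\om T_0}\le\tilde\ep/2$, in analogy with condition \eqref{a-condition-12345678} in the proof of Theorem \ref{thm-asymptotic-stability}, and freeze the estimate above at $t=t_1+T_0$. The result is precisely of the form \eqref{step-initial}, with $t_0$ replaced by $t_1$. From this point the argument is word-for-word the iteration in the proof of Theorem \ref{thm-asymptotic-stability}: one applies Lemma \ref{lem-key-asymptotic-stability} repeatedly to contract the width $\de_n$ and the amplitude $q_n$, first driving the width below $1$ and then forcing both to decay geometrically. This yields $\zeta_*=\zeta_*(\tilde u_0)\in\R$ and $r>0$ such that
\begin{equation*}
\sup_{x\in\R}|u(t,x;t_0,\tilde u_0)-u(t,x-\zeta_*)|\le C_1 e^{-r(t-t_1)}\quad\text{for all }t\ge t_1.
\end{equation*}

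Finally, to recast the decay in terms of $t-t_0$, I would absorb the factor $e^{r(t_1-t_0)}$ into a new constant $C=C(\tilde u_0)>0$ and, if necessary, enlarge it further so that the estimate trivially covers the compact interval $t\in[t_0,t_1]$ via the uniform $L^\infty$ bound on the difference. The only point I expect to require genuine care, rather than mere bookkeeping, is verifying that Corollary \ref{cor-stability-12345} truly produces the $\Ga_\al$-weighted sandwich needed to seed the iteration, rather than only the constant-amplitude version appearing in its stated form; this obstacle is dispatched by the observation above that the last step in its proof is an invocation of Theorem \ref{thm-stability}$\rm(1)$, whose conclusion is already $\Ga_\al$-weighted.
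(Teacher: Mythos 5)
Your proposal is correct and takes essentially the approach the paper implicitly intends: the paper simply remarks that the corollary is ``a simple consequence of Theorem \ref{thm-asymptotic-stability} and Corollary \ref{cor-stability-12345}'' and gives no explicit proof, and your reduction --- first use the initial-layer argument of Corollary \ref{cor-stability-12345} to bring $u(t,x;t_0,\tilde u_0)$ into a $\Ga_\al$-weighted sandwich at some $t_1\ge t_0$, then feed that sandwich into the squeezing iteration from the proof of Theorem \ref{thm-asymptotic-stability}, and finally re-express the resulting exponential decay relative to $t_0$ by absorbing $e^{r(t_1-t_0)}$ into the constant --- is precisely the natural way to realize that remark. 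You are also right to flag that directly applying Theorem \ref{thm-asymptotic-stability} at time $t_1$ would not work, since $u(t_1,-\infty;t_0,\tilde u_0)$ need not equal $1$, which is why rerunning the iteration rather than invoking the theorem as a black box is necessary. One small inaccuracy worth correcting: the final step in the proof of Corollary \ref{cor-stability-12345} invokes Theorem \ref{thm-stability}$\rm(2)$ rather than Theorem \ref{thm-stability}$\rm(1)$; the point stands because the proof of part $\rm(2)$ is itself an application of part $\rm(1)$, so the $\Ga_\al$-weighted sandwich is indeed available, but you should cite the chain $\rm(2)\to\rm(1)$ rather than attribute it directly to $\rm(1)$ --- this is the same slight looseness the paper itself exhibits at the start of the proof of Theorem \ref{thm-asymptotic-stability}, where it quotes Theorem \ref{thm-stability}$\rm(2)$ but writes down the $\Ga_\al$-weighted form coming from $\rm(1)$.
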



\appendix

\section{Comparison principles}\label{sec-app-cp}

We state comparison principles used in the previous sections. See \cite[Proposition A.1]{ShSh14-2} for the proof.

\begin{prop}\label{prop-app-comparison}
Let $K:\R\times\R\to[0,\infty)$ be continuous and satisfy $\sup_{x\in\R}\int_{\R}K(x,y)dy<\infty$.
Let  $a:\R\times\R\to\R$ be continuous and uniformly bounded.

\begin{itemize}
\item[\rm(i)] Suppose that $X:[0,\infty)\to\R$ is continuous and that $u:[0,\infty)\times\R\to\R$ satisfies the following:   $u, u_t:[0,\infty)\times\R\to\R$
are continuous, the limit $\lim_{x\ra\infty}u(t,x)=0$ is locally uniformly in $t$,  and
\begin{equation*}
\begin{cases}
u_{t}(t,x)\geq \int_{\R}K(x,y)u(t,y)dy+a(t,x)u(t,x),\quad x>X(t),\,\,t>0,\\
u(t,x)\geq0,\quad x\leq X(t),\,\,t>0,\\
u(0,x)=u_{0}(x)\geq0,\quad x\in\R.
\end{cases}
\end{equation*}
Then $u(t,x)\geq0$ for $(t,x)\in(0,\infty)\times\R$.

\item[\rm(ii)]  Suppose that $X:[0,\infty)\to\R$ is continuous and that $u:[0,\infty)\times\R\to\R$ satisfies the following:   $u, u_t:[0,\infty)\times\R\to\R$
are continuous, the limit $\lim_{x\ra -\infty}u(t,x)=0$ is locally uniformly in $t$,  and
\begin{equation*}
\begin{cases}
u_{t}(t,x)\geq \int_{\R}K(x,y)u(t,y)dy+a(t,x)u(t,x),\quad x<X(t),\,\,t>0,\\
u(t,x)\geq0,\quad x\ge X(t),\,\,t>0,\\
u(0,x)=u_{0}(x)\geq0,\quad x\in\R.
\end{cases}
\end{equation*}
Then $u(t,x)\geq0$ for $(t,x)\in(0,\infty)\times\R$.

\item[\rm(iii)] Suppose  that $u:[0,\infty)\times\R\to\R$ satisfies the following:   $u, u_t:[0,\infty)\times\R\to\R$
is continuous, $\inf_{t\ge 0,x\in\R}u(t,x)>-\infty$, and
\begin{equation*}
\begin{cases}
u_{t}(t,x)\geq \int_{\R}K(x,y)u(t,y)dy+a(t,x)u(t,x),\quad x\in\R,\,\,t>0,\\
u(0,x)=u_{0}(x)\geq0,\quad x\in\R.
\end{cases}
\end{equation*}
Then $u(t,x)\geq0$ for $(t,x)\in(0,\infty)\times\R$. Moreover, if $u_0(x)\not\equiv 0$, then
$u(t,x)>0$ for $(t,x)\in(0,\infty)\times\R$.
\end{itemize}
\end{prop}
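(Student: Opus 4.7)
The plan is to establish all three parts by the same barrier argument: set $v(t,x) := u(t,x) + \varepsilon e^{Lt}$ for arbitrary $\varepsilon > 0$ and a constant $L := M + \beta + 1$, where $M := \sup_{x}\int_\R K(x,y)\,dy < \infty$ and $\beta := \sup|a| < \infty$, show $v > 0$ on $[0,T]\times\R$ for every $T > 0$, and then pass to the limit $\varepsilon \downarrow 0$ to conclude $u \geq 0$. The key observation is that at any point where $v$ first touches $0$, both $v_t \leq 0$ and $u \geq -\varepsilon e^{Lt}$ hold, and plugging these into the differential inequality forces $L \leq M + \beta$, a contradiction.

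For part (i), $v$ is strictly positive at $t = 0$ (since $u_0 \geq 0$), strictly positive on $\{x \leq X(t)\}$ (since $u \geq 0$ there), and satisfies $v(t,x) \to \varepsilon e^{Lt} > 0$ as $x \to +\infty$ locally uniformly in $t$ by the hypothesis. If $v$ failed to be positive somewhere in $[0,T]\times\R$, continuity and these three positivity regions force a first touching point $(t_0, x_0)$ with $t_0 \in (0,T]$, $x_0 > X(t_0)$, $v(t_0, x_0) = 0$, $v \geq 0$ on $[0, t_0]\times\R$, and $v_t(t_0, x_0) \leq 0$. At this point $u(t_0, x_0) = -\varepsilon e^{Lt_0}$ and $u(t_0, \cdot) \geq -\varepsilon e^{Lt_0}$ globally, so the nonnegativity of $K$ yields
\begin{equation*}
u_t(t_0, x_0) \geq \int_\R K(x_0, y) u(t_0, y)\,dy + a(t_0, x_0) u(t_0, x_0) \geq -\varepsilon e^{Lt_0}(M + \beta),
\end{equation*}
while $v_t(t_0, x_0) \leq 0$ forces $u_t(t_0, x_0) \leq -\varepsilon L e^{Lt_0}$. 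These two inequalities together give $L \leq M + \beta$, contradicting the choice of $L$. Part (ii) is identical, with $x \to -\infty$ locating the touching point in the region $x < X(t_0)$.

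Part (iii) requires one extra step because the decay hypothesis at infinity is absent and $\inf_x v(t, x)$ need not be attained on $\R$. I would handle this by a translation-compactness device: with $t_* := \inf\{t > 0 : \inf_x v(t,x) \leq 0\}$, pick a sequence $x_n$ with $v(t_*, x_n) \to 0$, and consider the translates $\tilde u_n(t, x) := u(t, x + x_n)$ together with the shifted kernel $\tilde K_n(x,y) := K(x + x_n, y + x_n)$ and coefficient $\tilde a_n(t,x) := a(t, x + x_n)$. Each $\tilde u_n$ satisfies a differential inequality of the same form with the same constants $M, \beta$; uniform boundedness comes from the hypothesis $\inf u > -\infty$, and time-equicontinuity from the inequality itself. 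Arzel\`a--Ascoli along a subsequence yields a limit $\tilde u_\infty$ that again satisfies the inequality and now possesses a genuine touching point at $(t_*, 0)$, at which the single-point barrier computation above delivers the contradiction $L \leq M + \beta$ exactly as in part (i).

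The final strict-positivity claim in (iii), once $u \geq 0$ is established, follows from Duhamel iteration of the inequality $(ue^{\beta t})_t \geq e^{\beta t}\int_\R K(x,y)u(t,y)\,dy \geq 0$: a first integration gives $u(t,x) \geq e^{-\beta t} u_0(x)$, so positivity is immediate where $u_0 > 0$, and substituting this bound back into the Duhamel formula spreads positivity to all points reachable through the nonnegative kernel $K$. The main obstacle I anticipate is the translation-compactness step in part (iii): without a decay condition anchoring the infimum, one must replace the single-point argument of parts (i)--(ii) with a limit procedure along translates. By contrast, the barrier calculation itself is routine once $L = M + \beta + 1$ is fixed, and the strict-positivity spreading is standard for nonlocal equations with nonnegative kernels.
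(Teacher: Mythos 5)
Your barrier argument for parts (i) and (ii) is sound: the decay hypothesis confines the set $\{v\le 0\}$ to a compact region in $[0,T]\times\R$, so the first touching point exists and the single-point computation at $(t_0,x_0)$ gives the contradiction $L\le M+\beta$. The gap is in part (iii), where you yourself flag the translation-compactness step as the main obstacle — and it is indeed a genuine gap, for two reasons. First, Arzel\`a--Ascoli requires equicontinuity of $\tilde u_n(t,x)=u(t,x+x_n)$ in both variables, and you have neither: the translates share no uniform spatial modulus of continuity (only pointwise continuity of $u$ is assumed), and the differential inequality bounds $u_t$ only from \emph{below}, not above, so there is no time-equicontinuity either. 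Second, the definition $t_*:=\inf\{t>0:\inf_x v(t,x)\le 0\}$ does not produce a sequence $x_n$ with $v(t_*,x_n)\to 0$: since $\inf_x v(t,\cdot)$ is only upper semicontinuous in $t$, one may well have $\inf_x v(t_*,x)>0$ while $\inf_x v(t,x)<0$ for every $t>t_*$, so there is no approximate touching point at time $t_*$ to translate to. These obstructions are not cosmetic — the local analogue (Tikhonov's example for the heat equation on $\R^n$) shows that some structural input beyond pointwise touching is genuinely required when the infimum over space is not attained.

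A clean fix for part (iii) avoids compactness altogether. Multiply by the integrating factor $e^{-\int_0^t a(s,x)\,ds}$ and integrate to get the Duhamel inequality
\begin{equation*}
u(t,x)\;\ge\; e^{\int_0^t a\,ds}\,u_0(x)\;+\;\int_0^t e^{\int_\tau^t a\,ds}\int_\R K(x,y)\,u(\tau,y)\,dy\,d\tau .
\end{equation*}
Set $\Phi(t):=\sup_{x}\max(-u(t,x),0)$, which is finite and bounded on $[0,T]$ by the hypothesis $\inf u>-\infty$, and lower semicontinuous (hence measurable) as a supremum of continuous functions. Using $u_0\ge 0$ and $\int K(x,y)u(\tau,y)\,dy\ge -M\Phi(\tau)$ one obtains $\Phi(t)\le M e^{\beta T}\int_0^t \Phi(\tau)\,d\tau$ on $[0,T]$, and Gronwall gives $\Phi\equiv 0$, i.e.\ $u\ge 0$. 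This is the argument that actually closes part (iii) without any decay or compactness. Finally, your $(ue^{\beta t})_t\ge e^{\beta t}\int Ku\ge 0$ computation for strict positivity is fine as far as it goes, but as stated the spreading step needs more than the hypothesis $K\ge 0$, $\sup_x\int K(x,y)\,dy<\infty$ (if $K\equiv 0$ the strict positivity claim fails); it relies on the structure $K(x,y)=J(x-y)$ with $J\not\equiv 0$ from (H1), which should be invoked explicitly.
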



\bibliographystyle{amsplain}

\end{document}